\crefname{section}{Section}{Sections}
\setlist[itemize]{leftmargin=2em}
\numberwithin{equation}{section}
\renewcommand{\section}{\@startsection
{section}
{1}
{0mm}
{-\baselineskip}
{0.5\baselineskip}
{\large \bfseries}}
\renewcommand{\subsection}{\@startsection
{subsection}
{2}
{0mm}
{1mm}
{-1ex}
{\normalfont\normalsize\bfseries}}
\def\defthm#1#2#3#4{
  \newtheorem{#1}[theorem]{#3}
  \newtheorem*{#1*}{#3}
  \newtheorem{#2}[theorem]{#4}
  \newtheorem*{#2*}{#4}
  \crefname{#1}{#3}{#4}
  \crefname{#2}{#4}{#4}  
}
\newtheoremstyle{mythm}%
{10pt}
{}
{\itshape}
{}
{\bf}
{.}
{.5em}
{}%
\newtheoremstyle{mydef}%
{10pt}
{3pt}
{}
{}
{\bf}
{.}
{.5em}
{}%
\newtheoremstyle{myrmk}%
{10pt}
{3pt}
{}
{}
{\em}
{.}
{.5em}
{}%
\theoremstyle{mythm}
\newtheorem{theorem}{Theorem}[section]
\newtheorem*{theorem*}{Theorem}
\crefname{theorem}{Theorem}{Theorems}
\theoremstyle{mydef}
\theoremstyle{myrmk}
\newcommand{\ie}{{i.e.}}
\newcommand{\cf}{{cf.}}
\newcommand{\id}{\mathrm{Id}}
\newcommand{\myemph}{\textit}
\newcommand{\defeq}{=_{\mathrm{def}}}
\newcommand{\iso}{\cong}
\newcommand{\comp}{\,}
\newcommand{\kcomp}{\circ}
\newcommand{\op}{\mathrm{op}}
\newcommand{\co}{\colon}
\newcommand{\cat}{\mathbb}
\newcommand{\catX}{\cat{X}}
\newcommand{\catY}{\cat{Y}}
\newcommand{\catZ}{\cat{Z}}
\newcommand{\catA}{\cat{A}}
\newcommand{\catB}{\cat{B}}
\newcommand{\catC}{\cat{C}}
\newcommand{\cal}{\mathcal}
\newcommand{\psh}[1]{P{#1}}
\newcommand{\pshh}{P}
\newcommand{\pshA}{\psh{(\catA)}}
\newcommand{\pshB}{\psh{(\catB)}}
\newcommand{\pshX}{\psh{(\catX)}}
\newcommand{\pshY}{\psh{(\catY)}}
\newcommand{\pshZ}{\psh{(\catZ)}}
\newcommand{\yo}{\mathrm{y}}
\newcommand{\yon}{\mathrm{y}}
\newcommand{\ind}{D}
\newcommand{\indX}{\ind(\catX)}
\newcommand{\indY}{\ind(\catY)}
\newcommand{\shyA}{\pshA}
\newcommand{\shyB}{\pshB}
\newcommand{\bcat}{\cal}
\newcommand{\bC}{\bcat{C}}
\newcommand{\bD}{\bcat{D}}
\newcommand{\bE}{\bcat{E}}
\newcommand{\tcat}{\mathbf}
\newcommand{\Set}{\tcat{Set}}
\newcommand{\Cat}{\tcat{Cat}}
\newcommand{\CAT}{\tcat{CAT}}
\newcommand{\COC}{\tcat{COC}}
\newcommand{\FIL}{\tcat{FIL}}
\newcommand{\MONCOC}{\tcat{MONCOC}}
\newcommand{\Prof}{\tcat{Prof}}
\newcommand{\SProf}{\ms\text{-}\Prof}
\newcommand{\Mon}{\tcat{Mon}}
\newcommand{\MON}{\tcat{MON}}
\newcommand{\MonCoc}{\tcat{MONCOC}}
\newcommand{\Esp}{\mathbf{Esp}}
\newcommand{\CatSym}{\mathbf{CatSym}}
\newcommand{\ms}{S}
\newcommand{\es}{e}
\newcommand{\mus}{m}
\newcommand{\mss}{\ms^2}
\newcommand{\mt}{T}
\newcommand{\et}{i}
 \newcommand{\mut}{n}
 \newcommand{\mtt}{\mt^2}
 \newcommand{\mttt}{\mt^3}
 \newcommand{\meta}{\eta} 
 \newcommand{\mmu}{\mu}
 \newcommand{\mbeta}{\theta}
 \newcommand{\mepsilon}{\varepsilon}
 \newcommand{\mts}{\mt \ms}
 \newcommand{\mps}{\psh \ms}
\newcommand{\da}{\ast}
\newcommand{\sh}{\sharp}
\newcommand{\kl}{\mathrm{Kl}}
\newcommand{\Salg}{\ms\text{-}\mathrm{Alg}}
\newcommand{\psSalg}{\mathrm{Ps}\text{-}\ms\text{-}\mathrm{Alg}}
\newcommand{\Spalg}{\ms'\text{-}\mathrm{Alg}}
\newcommand{\liftT}{\bar{\mt }}
\newcommand{\extS}{\tilde{S}}
\newcommand{\exte}{\tilde{e}}
\newcommand{\fra}{((h^\da \comp g)^\da \eta_f)^\da}
\newcommand{\frb}{\varepsilon_{(h^\da g)^\da \comp f^\da}}
\newcommand{\frc}{(h^\da \comp \eta_g)^\da \comp f^\da}
\newcommand{\frd}{\varepsilon_{h^\da \comp g^\da} \comp f^\da}
\newcommand{\fre}{h^\da \comp \varepsilon_{g^\da \comp f^\da}}
\newcommand{\frf}{h^\da \comp (g^\da \comp \eta_f)^\da}
\newcommand{\frg}{\varepsilon_{h^\da \comp (g^\da \comp f)^\da}}
\newcommand{\frh}{(h^\da \comp \eta_{g^\da \comp f})^\da}
\newcommand{\fri}{(\varepsilon_{h^\da \comp g^\da} \comp f)^\da}
\newcommand{\frj}{((h^\da \comp \eta_g)^\da \comp f)^\da }
\newcommand{\frmm}{((h^\da \comp g^\da \comp \et)^\da \comp \eta_f)^\da}
\newcommand{\frn}{\varepsilon_{(h^\da \comp g^\da \comp \et)^\da \comp f^\da}}
\newcommand{\fro}{(\varepsilon_{h^\da \comp g^\da} \comp f^\da \comp \et)^\da}
\newcommand{\frp}{\varepsilon_{h^\da \comp g^\da \comp f^\da}}
\newcommand{\frq}{(h^\da \comp \eta_{g^\da \comp f^\da \comp \et})}
\newcommand{\frr}{\varepsilon_{h^\da ( g^\da \comp f^\da \comp \et)^\da}}
\newcommand{\frs}{(h^\da \comp (g^\da \comp \eta_f)^\da \comp \et)}
\newcommand{\frt}{(h^\da \comp g^\da  \eta_f)^\da}
\newcommand{\fru}{((h^\da \comp \eta_g)^\da \comp f^\da \comp \et)^\da}
\title[Relative pseudomonads]{Relative pseudomonads, Kleisli bicategories, \\ and substitution monoidal structures}
\author{M. Fiore}
\address{Computer Laboratory, University of Cambridge}
\email{Marcelo.Fiore@cl.cam.ac.uk}
\author{N. Gambino}
\address{School of Mathematics, University of Leeds} 
\email{n.gambino@leeds.ac.uk}
\author{M. Hyland}
\address{DPMMS, University of Cambridge}
\email{m.hyland@dpmms.cam.ac.uk}
\author{G. Winskel}
\address{Computer Laboratory, University of Cambridge}
\email{Glynn.Winskel@cl.cam.ac.uk}
\date{September 28th, 2017}
\begin{document}

\begin{abstract} We introduce the notion of a relative pseudomonad, which generalizes
the notion of a pseudomonad, and define the Kleisli bicategory associated to a relative 
pseudomonad. We then present an efficient method to define pseudomonads on
the Kleisli bicategory of a relative pseudomonad. The results are applied to define 
 several pseudomonads on the bicategory of profunctors in an homogeneous way
 and provide a uniform approach to the definition of bicategories that are of interest in operad theory, 
mathematical logic, and theoretical computer science.
\end{abstract}

\maketitle

\section{Introduction}

Just as classical monad theory provides a general approach to study algebraic structures on objects of a category (see~\cite{BarrM:toptt} for example), 2-dimensional monad theory offers an elegant way to investigate algebraic structures on objects of a 2-category~\cite{BlackwellR:twodmt,HylandM:psempc,KellyG:cohtla,KellyG:prolt,PowerJ:agencr,ShulmanM:notepe}.  
Even if the strict notion of a 2-monad
is sufficient to develop large parts of the theory,  the strictness requirements that are part of its definition are too restrictive for some applications and it is necessary to work with the notion of a pseudomonad~\cite{BungeM:cohera}, in which the diagrams expressing the associativity and unit axioms for a 2-monad commute up to specified invertible modifications, rather than strictly. In recent years, pseudomonads have been studied 
extensively~\cite{ChengE:psdist,LackS:cohapm,MarmolejoF:dislp,MarmolejoF:cohplr,MarmolejoF:kaneli,MarmolejoF:noip,TanakaM:psedl}.

Our general aim here is to develop further the theory of pseudomonads. In particular, we introduce 
relative pseudomonads, which generalize pseudomonads, 
define the associated Kleisli bicategory of a relative pseudomonad, and describe a method to extend a 2-monad on a \mbox{2-category} 
to a pseudomonad on the Kleisli bicategory of a relative pseudomonad.  We use this method to show
how several 2-monads on the \mbox{2-category} $\Cat$ of small categories and functors can be extended to pseudomonads on the 
bicategory $\Prof$ of small categories and profunctors~(also known as bimodules or 
distributors)~\cite{BenabouJ:disw,LawvereF:metsgl,StreetR:fibb}. This result has applications in the theory of variable binding~\cite{FioreM:secodsa,FioreM:abssvb,PowerJ:binsgc,TanakaM:abssvllb},  concurrency~\cite{CattaniG:proomb}, species of structures~\cite{FioreM:carcbg}, models of the differential $\lambda$-calculus~\cite{FioreM:matmcc}, and operads and multicategories~\cite{ChikhladzeD:laxftm,CrutwellG:unifgm,CurienPL:opecdl,GambinoN:opebaf,GurskiN:opetpc}.

For these applications, one would like to regard the bicategory of profunctors as a Kleisli bicategory and then use the theory of pseudo-distributive laws~\cite{KellyG:cohtla,MarmolejoF:dislp,MarmolejoF:cohplr}, 
\ie~the 2-dimensional counterpart of Beck's fundamental work on distributive laws~\cite{BeckJ:disl} (see~\cite{StreetR:fortm} for an abstract treatment). In order to carry out this idea, one is naturally led to try to consider the presheaf construction, which sends a small category $\catX$ to its category of presheaves $\pshX \defeq [\catX^\op, \Set]$, as a pseudomonad. Indeed, a profunctor $F \co \catX \to \catY$, i.e. a functor $F \co \catY^\op \times \catX \to \Set$, can be  identified with a functor $F \co \catX \to \pshY$. However, the presheaf construction fails to be a pseudomonad for size reasons, since it sends small categories to locally small ones, making it impossible to define a multiplication. Although some aspects of the theory can be developed restricting the attention to small presheaves~\cite{DayB:limsf}, which support the structure of a pseudomonad, some of our applications naturally involve  general presheaves and thus require us to deal not only with coherence but also size issues. 

In order to do so, we introduce the notion of a relative pseudomonad (\cref{thm:relative-pseudomonad}), which is based on the notions of a relative monad~\cite[Definition~2.1]{AltenkirchT:monnnb} and of a no-iteration pseudomonad~\cite[Definition~2.1]{MarmolejoF:noip}. These notions are, in turn, inspired by Manes' notion of a Kleisli triple~\cite{ManesE:algt}, which is equivalent to that of a monad, but better suited to define Kleisli categories~(see 
also~\cite{MarmolejoF:monesn,WaltersR:catau}). 
For a pseudofunctor between bicategories $J \co \bC \to \bD$ (which in our main example is the inclusion $J \co \Cat \to \CAT$ of the \mbox{2-category} of small categories into the \mbox{2-category} of locally small categories), the core of the data for a  relative pseudomonad~$T$ over $J$ consists of an object $TX \in \bD$ for every  $X \in \bC$, a morphism $\et_X \co JX \to \mt X$  for every $X \in \bC$, and a morphism $f^\da \co 
\mt X \to \mt Y$ for every $f \co JX \to \mt Y$ in $\bD$. This is  as in a relative monad, but the equations for a relative monad are replaced in a relative pseudomonad 
by families of invertible 2-cells satisfying appropriate coherence conditions, as in a no-iteration pseudomonad. As we will see in \cref{thm:kbicat}, these conditions imply that every relative pseudomonad $\mt$ over $J \co \bC \to \bD$ has an 
associated Kleisli bicategory~$\kl(\mt)$, defined analogously to the one-dimensional case. In our main example, the presheaf construction gives rise  to a relative pseudomonad over the inclusion $J \co \Cat \to \CAT$ in a natural way and it is then immediate to identify its Kleisli bicategory with the bicategory of profunctors. It should be noted here that the presheaf construction is
neither a no-iteration pseudomonad (because of size issues) nor a relative monad (because of strictness issues).

As part of our development of the theory of relative pseudomonads, we show how relative pseudomonads generalize 
no-iteration pseudomonads (\cref{thm:no-iteration-is-rel-id}) and hence (by the results in~\cite{MarmolejoF:noip}) also pseudomonads. We then introduce relative pseudoadjunctions, which are related to relative pseudomonads just as adjunctions are connected to monads. In particular, we show that every relative pseudoadjunction gives rise to a relative pseudomonad (\cref{thm:adjkls}) 
 and that the Kleisli bicategory associated to a relative pseudomonad fits in a relative pseudoadjunction~(\cref{thm:psekle}).  
 
 Furthermore, we introduce and study the notion of a lax idempotent relative pseudomonad, which appears to be the appropriate counterpart in our setting of the notion of a lax idempotent 2-monad (often called Kock-Z\"oberlein doctrines)~\cite{KellyG:prolt,KockA:monwsa,ZoberleinV:dokat} and pseudomonad~\cite{MarmolejoF:docwsf,MarmolejoF:kaneli,StreetR:fibb}.
In~\cref{prop:laxid} we will give several equivalent characterizations of lax idempotent relative pseudomonad and combine this result with the analogous one in~\cite{MarmolejoF:kaneli} to show that a 
pseudomonad  is lax idempotent as a pseudomonad in the usual sense only if it is lax idempotent as a relative pseudomonad in our sense. 
This notion is of  interest since it allows us to exhibit examples of relative pseudomonads by reducing the verification of the 
coherence axioms for a relative pseudomonad to the verification of certain universal properties.  In particular, the relative pseudomonad of presheaves can be constructed in this way.  

We then consider the question of when a 2-monad on the 2-category $\Cat$ of small categories can be extended to a pseudomonad on the bicategory $\Prof$ of profunctors. Rather than adapting  the theory of distributive laws to relative pseudomonads along the lines of what has been done for no-iteration monads~\cite{MarmolejoF:monesn}, which would involve complex calculations with coherence conditions, we establish directly that,  for a pseudofunctor $J \co \bC \to \bD$ of 2-categories, a 2-monad~$S \co \bD \to \bD$ restricting to~$\bC$ along $J$ in
a suitable way, and a relative pseudomonad $T$ over $J$, if $T$ admits a lifting to 2-categories of  strict algebras or pseudoalgebras for~$S$, then~$S$ admits an extension to the Kleisli bicategory 
of~$T$~(\cref{thm:main}). We do so bypassing the notion of a pseudodistributive law in a counterpart of Beck's result. 

This result is well-suited to our applications, where the structure that manifests itself most naturally is that of a lift of the relative pseudomonad of presheaves to various
2-categories of categories equipped with algebraic structure, often via forms of Day's convolution monoidal structure~\cite{DayB:clocf,ImG:unipcm}.  In particular, our results will imply that the 2-monads for several important notions (categories with terminal object, categories with  finite products, categories with finite limits, monoidal categories, symmetric monoidal categories, unbiased monoidal categories, unbiased symmetric  mo\-noi\-dal categories, strict monoidal categories, and symmetric strict monoidal categories) can be extended to pseudomonads on the bicategory of profunctors. A reason for interest in this result is that the compositions in the Kleisli bicategories of these pseudomonads can be understood as variants of the substitution monoidal structure that can be used to characterize
notions of operad~\cite{KellyG:onojpm,SmirnovV:onccts}.

As an illustration of the applications of our theory, we discuss our results in the special case of the 2-monad $S$ for symmetric strict monoidal categories, showing how it can be extended to a pseudomonad on the bicategory of profunctors. This result is  the cornerstone of the understanding of the bicategory of generalized species of structures defined in~\cite{FioreM:carcbg} as a `categorified' version of the relational model of linear logic~\cite{HylandM:engeler,HylandM:somrgd} and leads to showing that the substitution monoidal structure that gives rise to the notion of a coloured  operad~\cite{BaezJ:higda} is a special case of the composition in the Kleisli bicategory. The results presented here are intended to make these ideas precise by dealing with both size and coherence issues in a conceptually clear way.

\subsection*{Organization of the paper} \cref{sec:background} reviews some background material on 2-monads, pseudomonads and their algebras. Our development
starts in \cref{sec:relative-pseudomonads}, where we introduce relative pseudomonads, relate them to
no-iteration pseudomonads and ordinary pseudomonads, introduce relative pseudoadjunctions and establish a connection between relative pseudoadjunctions and 
relative pseudomonads.
\cref{sec:kleb} defines the Kleisli bicategory associated to a relative pseudomonad and discusses some of its basic properties.  In~\cref{sec:lax-idempotency} we introduce and study lax idempotent relative pseudomonads. \cref{sec:liftings-extensions-compositions} shows that 
an extension of a relative pseudomonad~$T$ to 2-categories of strict algebras or pseudoalgebras for a 2-monad~$S$ induces an extension of~$S$ to the Kleisli bicategory of~$T$,
as well as a composite relative pseudomonad $\mts$. 
We conclude the paper in \cref{sec:applications} by discussing applications of our theory and showing how several 2-monads on $\Cat$ can be extended  to pseudomonads on $\Prof$.

\subsection*{Acknowledgements} Nicola Gambino acknowledges gratefully the support of EPSRC, under grant~EP/M01729X/1, and of the US Air Force Research Laboratory, 
under agreement number FA8655-13-1-3038. We are grateful to the anonymous referee for insightful comments, which led in particular to a simplification of the material 
in~\cref{sec:lax-idempotency}. 

\section{Background}
\label{sec:background}

\subsection*{2-categories and 2-monads} We assume that readers are familiar with the fundamental aspects of the theory of 2-categories and of bicategories (as presented, for
example, in~\cite{BenabouJ:intb,BorceuxF:hancaI,LackS:a2cc}) and confine ourselves to review some facts that will be used in the following and to fix notation and conventions. 

For a 2-category $\bC$ and a pair of objects $X, Y \in \bC$, we write $\bC[X,Y]$ for the hom-category of morphisms $f \co X \to Y$ and 2-cells between them, which we denote with lower-case Greek letters, $\phi \co f \to f'$.  Two parallel morphisms $f \, , f' \co X \to Y$
are said to be \emph{isomorphic} if they are isomorphic as objects of $\bC[X,Y]$, and we write $f \iso f'$ in this case. We write $\CAT$ 
 for the 2-category of locally small categories, functors, and natural transformations. Its full sub-2-category spanned by small categories will be written $\Cat$. We then have an inclusion $J \co \Cat \to \CAT$. We use the terms \emph{pseudofunctor}, \emph{pseudonatural transformation}, and \emph{pseudoadjunction} rather than homomorphism,  strong natural transformation, and
 biadjunction, respectively. 
 
Let us now review some aspects of 2-dimensional monad theory~\cite{BlackwellR:twodmt}. By a \emph{$2$-monad} on a 2-category $\bC$ we mean a 2-functor $\ms \co \bC \to \bC$ equipped with 2-natural transformations $m \co S^2 \to S$ and 
$e \co 1_\bC \to S$, called the \emph{multiplication} and \emph{unit} of the 2-monad, respectively, satisfying the usual axioms for a monad in a strict sense.  As usual, we often refer to a 2-monad by mentioning only its underlying 2-functor, leaving implicit the rest of its data. Similar conventions will be used for other kinds of structures considered in the rest of the paper. 

For a 2-category $\bC$ and 2-monad~$S \co \bC \to \bC$, we write  $\psSalg_\bC$ (or $\psSalg$ if no confusion arises) for the 2-category of pseudoalgebras, pseudomorphisms and algebra 2-cells, and~$\Salg_\bC$ (or~$\Salg$) for the locally full sub-2-category of $\psSalg_\bC$ spanned by strict algebras. Here, by a 
\emph{pseudoalgebra} we mean an object $A \in \bC$, called the \emph{underlying object} of the algebra,
equipped with a 
morphism $a \co \ms A \to A$, called the \emph{structure map} of the algebra, and invertible  $2$-cells 
\[
\begin{xy}
(-12,8)*+{\mss A}="4";
(-12,-8)*+{\ms A }="3";
(12,8)*+{\ms A}="2";
(12,-8)*+{A \, ,}="1"; 
(-1,2)="5";
(-1,-3)="6";
{\ar^{\ms a} "4";"2"};
{\ar_{a} "3";"1"};
{\ar_{m_A} "4";"3"};
{\ar^{a} "2";"1"};
{\ar@{=>}^{\ \scriptstyle{\bar{a}}} "5";"6"};
\end{xy}
\hspace{1.5cm}
\begin{xy}
(-8,8)*+{A}="4";
(8,8)*+{\ms A}="2";
(8,-8)*+{A\, ,}="1";
(5,1)="5";
(0,1)="6";
{\ar@/_/_{1_A} "4";"1"};
{\ar^(.45){e_A} "4";"2"};
{\ar^{a} "2";"1"};
{\ar@{=>}^(.4){{\scriptstyle{\tilde{a}}}} "6";"5"};
\end{xy} 
\]
called the \emph{associativity} and \emph{unit} 2-cells of the algebra, subject to two coherence axioms~\cite{StreetR:fibyl}.  We have a \emph{strict algebra} when the associativity and unit 2-cells are identities, in which case (as in analogous cases below) the
coherence conditions are satisfied trivially. For pseudoalgebras~$A$ and~$B$ (and in particular for strict algebras), a pseudomorphism from $A$ to $B$ consists of a morphism $f \co A \rightarrow B$ and an invertible $2$-cell 
\begin{equation}
\label{equ:marcelo}
\begin{gathered}
\begin{xy}
(-12,8)*+{\ms A}="4";
(-12,-8)*+{ A }="3";
(12,8)*+{\ms B}="2";
(12,-8)*+{B \, ,}="1"; 
(-1,2)="5";
(-1,-3)="6";
{\ar^{\ms f} "4";"2"};
{\ar_{f} "3";"1"};
{\ar_{a} "4";"3"};
{\ar^{b} "2";"1"};
{\ar@{=>}^{\ \bar{f}}"5";"6"};
\end{xy}
\end{gathered}
\end{equation}
required to satisfy two coherence axioms~\cite{BlackwellR:twodmt,StreetR:fibyl}. For pseudomorphisms $f, g \co A \to B$, an \emph{algebra $2$-cell} between them is a  $2$-cell $\alpha: f \rightarrow g$ that satisfies one coherence  axiom~\cite{BlackwellR:twodmt}. We have  a forgetful 2-functor $U \co \psSalg \to \bC$ with a left pseudoadjoint
$F \co \bC \to \psSalg$, defined by mapping 
an object $X \in \bC$ to the \emph{free algebra} on $X$, which is the strict algebra having $\ms X$ as its underlying object and $\mus_X \co \mss X \to \ms X$ as its structure map. The components of the unit of the pseudoadjunction are the components of the unit of the 2-monad.

In our applications, we will consider several 2-monads on $\CAT$ (restricting to $\Cat$ in an evident way), for which we invite the readers to consult~\cite{BlackwellR:twodmt,LawvereW:ordsed,LeinsterT:higohc}.   Among them, the 2-monads for (strict) monoidal categories, symmetric (strict) monoidal categories, 
categories with finite limits, categories with finite products, and categories with a terminal object.

\subsection*{Bicategories and pseudomonads} For a bicategory  $\bC$, we 
write the associativity and unit isomorphisms as natural families of invertible 2-cells 
\begin{equation}
\label{equ:bicat-isos}
\xymatrix{ (h \comp g) \comp f \ar[r]^-{\iso} & h \comp (g \comp f)}  \, , \quad
\xymatrix{ 1_Y  f \ar[r]^-{\iso} & f}  \, , \quad 
\xymatrix{ f \ar[r]^-{\iso} & f \comp 1_X \, .}
\end{equation}
which we leave unnamed. By the coherence theorem for bicategories~\cite{MacLaneS:coh-bicategories} (which also follows from the
bicategorical Yoneda lemma~\cite{StreetR:fibb}, see~\cite{GordonR:cohb}), every bicategory is biequivalent to a 2-category.
In virtue of this, we shall often treat bicategories as if they were 2-categories.

\begin{example} \label{exa:prof}
Fundamental to our applications is the bicategory $\Prof$ of profunctors~\cite{BenabouJ:disw,LawvereF:metsgl,StreetR:fibb}. Its objects are
small categories; and for small categories $\catX$ and $\catY$, the hom-category $\Prof[\catX, \catY]$ is defined to be $\CAT[\catY^\op \times \catX, \Set]$. 
The composite of profunctors $F \co \catX \to \catY$ and $G \co \catY \to \catZ$ is given by the profunctor $G \circ F \co \catX \to \catZ$ defined by the coend formula
\begin{equation} 
\label{equ:prof-comp}
(G\circ F)(z,x) \defeq \int^{y \in \catY} G(z,y) \times F(y,x) \, .
\end{equation} 
For a small category $\catX$, the identity profunctor $\id_\catX \co \catX \to \catX$ is defined by letting 
\begin{equation}
\label{equ:prof-id}
 \id_\catX(x,y) \defeq \catX[x,y] \, . 
\end{equation}
It remains to prove that these definitions give rise to a bicategory.
In the literature, it is often suggested that this can be proved by
direct calculations, left to the readers. In \cref{sec:relative-pseudomonads}, instead, we will give a  more conceptual proof by describing $\Prof$ as the Kleisli bicategory associated to the relative pseudomonad of presheaves. 
\end{example}

 A \emph{pseudomonad} on a bicategory $\bC$ is given by a 
pseudofunctor $T \co \bC  \rightarrow \bC$, pseudonatural transformations $\mut \co T^2 \rightarrow T$ and $i \co  1_{\bC} \rightarrow T$, called
the \emph{multiplication} and \emph{unit} of the pseudomonad, respectively, and invertible modifications $\alpha$, $\rho$, and $\lambda$ ,  
called the  \emph{associativity}, \emph{right unit}, and \emph{left unit}, respectively, of $T$, fitting in the diagrams 
\begin{equation}
\label{equ:pseudomonad-modifications}
\begin{gathered}
\begin{xy}
(0,16)*+{\mttt}="4";
(24,16)*+{\mtt}="3";
(0,0)*+{\mtt}="2";
(24,0)*+{\mt \, , }="1"; 
(12,10)="5";
(12,5)="6";
{\ar_{\mut \mt} "4";"2"};
{\ar^{\mut} "3";"1"};
{\ar^{\mt \mut} "4";"3"};
{\ar_{\mut} "2";"1"};
 {\ar@{=>}^(.35){\ \scriptstyle{\alpha}} "5";"6"};
\end{xy} \qquad 
\begin{xy}
(0,16)*+{\mt}="4";
(16,16)*+{\mtt}="2";
(16,0)*+{\mt  }="1";
(32,16)*+{\mt}="3";
(8,10)="5";
(12,10)="6";
(21,10)="7";
(25,10)="8";
{\ar@/_/_{1} "4";"1"};
{\ar^(.45){\et \mt} "4";"2"};
{\ar^{\mut} "2";"1"};
{\ar@/^/^{1} "3";"1"};
{\ar_{\mt \et} "3";"2"};
{\ar@{=>}^(.4){{\scriptstyle{\lambda}}\; } "5";"6"};
{\ar@{=>}^(.3){{\scriptstyle{\rho}}\; } "7";"8"};
\end{xy} 
\end{gathered}
\end{equation}
and subject to two coherence conditions~\cite{LackS:cohapm}. The notions of a strict algebra and pseudoalgebra, of
strict morphism and pseudomorphism, and of algebra 2-cell make sense also for pseudomonads, giving rise to bicategories $\psSalg$ and $\Salg$. When $\bC$ is a 2-category, these are again 2-categories. 

Every pseudomonad  has also an associated Kleisli bicategory~\cite{ChengE:psdist}, which can be defined in
complete analogy with the one-dimensional case; but we do not spell this out since we will give an alternative account of
the Kleisli construction in \cref{sec:relative-pseudomonads}. Importantly, in constrast with the situation for algebras discussed above,
the Kleisli construction for a pseudomonad $T \co \bC \to \bC$ produces only a bicategory 
even when $\bC$ is a $2$-category, with the associativity and unit isomorphisms of $T$ used to give the associativity and unit isomorphisms
of the Kleisli bicategory~(see also~\cref{thm:kbicat} below).

\section{Relative pseudomonads} 
\label{sec:relative-pseudomonads}

In ordinary category theory, the notion of a monad has an equivalent alternative presentation, via the notion of a Kleisli
triple~\cite{ManesE:algt}, which is particularly convenient to define Kleisli categories. The notion of a Kleisli triple admits a natural generalization, given by the notion of a relative monad~\cite{AltenkirchT:monnnb}, which is obtained by allowing 
the underlying mapping on objects of the Kleisli triple to be defined  relative to a fixed functor (see~\cite{AltenkirchT:monnnb} for details). 
Similarly, in 2-dimensional category theory, the notion of a pseudomonad can be rephrased equivalently as the notion of a no-iteration 
pseudomonad~\cite{MarmolejoF:noip}, which is the 2-dimensional analogue of the notion of a Kleisli triple. 
Here, we introduce relative pseudomonads, which generalize no-iteration pseudomonads 
in the same way as relative monads generalize Kleisli triples, 
\ie~by allowing the mapping on objects that is part of a no-iteration pseudomonad to be defined 
relatively to a fixed pseudofunctor between bicategories. From now until the end of this section, we consider a fixed pseudofunctor between bicategories $J \co \bC \to \bD$.

\begin{definition} \label{thm:relative-pseudomonad}   
A \myemph{relative pseudomonad} $T$ over $J    \co \bC \to \bD$ consists of 
\begin{itemize} 
\item an object $\mt X \in \bD$, for every $X \in \bC$,
\item a family of functors $(-)^\da_{X,Y}   \co \bD[JX, \mt  Y]  \rightarrow   \bD[\mt  X, \mt  Y]$ 
for $X, Y \in \bC$,
\item a family of morphisms $\et_X \co JX \rightarrow \mt   X$ in $\bD$ for $X \in \bC$,
\item a natural family of invertible 2-cells $\mmu_{g, f} \co (g^\da \comp f)^\da \rightarrow g^\da \comp f^\da$, 
for $f \co JX \rightarrow \mt  Y$, $g\co JY \rightarrow \mt  Z$,
\item a natural family of invertible 2-cells $\meta_f \co f \rightarrow f^\da \comp \et_X$, for $f \co JX \to \mt Y$ in $\bD$,
\item a family of invertible 2-cells $\mbeta_X \co {\et_X}^\da \rightarrow 1_{\mt  X}$, for $X \in \bC$,
\end{itemize} 
such that the following conditions hold: 
\begin{itemize}   
\item for every $f \co JX \to \mt Y$, $g \co JY \to \mt Z$, $h \co J Z \to T V$, the diagram
\begin{equation} 
\label{equ:assoc-relativepseudomonad} 
\begin{gathered} 
\xymatrix{
 & ((h^\da \comp g)^\da f)^\da \ar[dr]^{\mmu_{ h^\da g, f}} \ar[dl]_{(\mmu_{h, g} \comp f)^\da} &  \\
 ( (h^\da g^\da) f )^\da \ar[d]_{\iso}  & & (h^\da g)^\da f^\da \ar[d]^{\mmu_{h, g} \comp f^\da} \\
  ( h^\da (g^\da f) )^\da  \ar[d]_{\mmu_{h, g^\da f}} & & (h^\da g^\da) f^\da \ar[d]^{\iso} \\ 
 h^\da (g^\da f)^\da \ar[rr]_{h^\da \comp \mmu_{g, f}} & & h^\da (g^\da f^\da) }
 \end{gathered}
 \end{equation} 
 commutes, and
 \item for every $f \co J X \to \mt Y$, the diagram
 \begin{equation}
 \label{equ:unit-relativepseudomonad} 
 \begin{gathered} 
\xymatrix@C+3ex{
f^\da \ar[r]^-{(\meta_f)^\da} \ar@/_1pc/[drr]_{\iso} &  (f^\da \comp \et_X)^\da \ar[r]^-{\mmu_{f, \et_X}} &    f^\da \comp {\et_X}^\da \ar[d]^{f^\da \comp \mbeta_X}  \\
 & & f^\da \comp 1_{\mt X}   }
 \end{gathered}
 \end{equation} 
 commutes. 
 \end{itemize}
\end{definition}

We introduce some terminology. For a relative
pseudomonad $\mt$ over  $J    \co \bC \to \bD$ as in~\cref{thm:relative-pseudomonad},
we refer to the family 
of morphisms $\et_X \co JX \to \mt X$, for $X \in \bC$, as the \emph{unit} of $\mt$, and to the family of
2-cells $\mmu$, $\meta$, and $\mbeta$ as the \emph{associativity}, \emph{right unit}, and \emph{left unit} of $\mt$, respectively. Finally, we refer to the axioms in~\eqref{equ:assoc-relativepseudomonad} and~\eqref{equ:unit-relativepseudomonad} as the~\emph{associativity} and \emph{unit axioms} for $T$, respectively.    
Note that in order to simplify the notation we have omitted the subscripts on the functors $(-)^\da_{X, Y}$ and we will henceforth continue to do so.  We furthermore adopt the convention
of writing~$X$ rather than~$\et_X$ in a subscript of $\mmu$ and $\mbeta$. So, for example, we have
\begin{gather*} 
  \mmu_{f, X}   \co (f^\da \et_X)^\da \to f^\da \comp {\et_X}^\da \, , \\
  \meta_{X}   \co  \et_X \to   {\et_X}^\da \comp \et_X  \, .
  \end{gather*}
We also omit the detailed definition of some 2-cells in diagrams, labelling arrows only with the main 2-cell involved in its definition, and omitting subscripts. In all such cases, the precise definition of the 2-cell can be easily deduced from
its domain and codomain.

We wish to make precise in what sense relative pseudomonads are a generalization of no-iteration 
pseudomonads~\cite[Definition~2.1]{MarmolejoF:noip}. This will be useful in order to relate relative
pseudomonads and ordinary pseudomonads. 

\begin{lemma} \label{thm:cohkle}
Let $\mt$ be a relative pseudomonad over $J \co \bC \to \bD$.
\begin{enumerate}[(i)]  
\item For every $f \co JX \to \mt  Y$ and $g \co JY \to \mt  Z$, the diagram
\begin{equation*}
\begin{gathered}
 \xymatrix@C=1.4cm{
g^\da \comp f \ar[r]^-{\meta_{g^\da \comp f}} \ar@/_1pc/[dr]_-{g^\da \comp \meta_f}  & 
(g^\da \comp f)^\da \comp \et_X \ar[d]^-{\mmu_{g, f}}  \\ 
 & g^\da \comp f^\da \comp \et_X } 
 \end{gathered}
 \end{equation*} 
 commutes.
 \item For every $f \co JX \to \mt  Y$, the diagram
 \[
  \xymatrix@C=1.4cm{
 ({\et_Y}^\da \comp f)^\da \ar[r]^{\mmu_{Y, f}} \ar@/_1pc/[dr]_{(\mbeta_Y \comp f)^\da} & {\et_Y}^\da \comp f^\da 
  \ar[d]^{\mbeta_Y \comp f^\da} \\
  & f^\da } 
  \]
  commutes. 
  \item For every $X \in \bC$, the diagram
  \[
   \xymatrix@C=1.4cm{
   \et_X \ar[r]^-{\meta_{X}} \ar@/_1pc/[dr]_{1} & {\et_X}^\da \comp \et_X \ar[d]^{\mbeta_X \comp \et_X} \\
    & \et_X }
\]
commutes.
\end{enumerate}
\end{lemma}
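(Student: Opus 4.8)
The three statements are consequences of the associativity and unit axioms \eqref{equ:assoc-relativepseudomonad}, \eqref{equ:unit-relativepseudomonad}, together with the naturality of the families $\mmu$ and $\meta$. The plan is to derive each part by combining one of the two axioms with a naturality square, using only elementary diagram-pasting.

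For part (i), the plan is to start from the naturality of $\meta$ with respect to the 2-cell $\mmu_{g,f} \co (g^\da \comp f)^\da \to g^\da \comp f^\da$, composed on the right with $\et_X$. Naturality of $\meta$ applied to the morphism $g^\da \comp f \co JX \to \mt Z$ gives the triangle relating $\meta_{g^\da\comp f}$, $\mmu_{g,f}\comp\et_X$, and $(g^\da\comp f)^\da \comp \et_X$; the remaining edge $g^\da\comp \meta_f$ is identified by applying the unit axiom \eqref{equ:unit-relativepseudomonad}-like reasoning, or more directly, since $\meta$ is natural and $\meta_{g^\da \comp f}$ is built from $\meta_f$ via the functoriality of $(-)^\da$ in the appropriate variable, the coherence follows. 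I expect part (i) to reduce to a single naturality square for $\meta$ plus one application of the definition of the induced 2-cell.

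For part (ii), the plan is to instantiate the unit axiom \eqref{equ:unit-relativepseudomonad}, or rather its left-unit analogue, by taking the morphism under consideration to be $f$ and using $\mbeta_Y$. Concretely: apply naturality of $\mmu$ in its first argument to the 2-cell $\mbeta_Y \co {\et_Y}^\da \to 1_{\mt Y}$, which gives a commuting square relating $\mmu_{Y,f}$, $\mmu_{1_{\mt Y}, f}$, $\mbeta_Y \comp f^\da$, and $(\mbeta_Y \comp f)^\da$; then observe that $\mmu$ with a unit in the first slot composed with the bicategorical unit isomorphisms collapses, so that the $({\et_Y}^\da\comp f)^\da$ vertex maps down to $f^\da$ as required. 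The main subtlety here is keeping track of the bicategory unit isomorphisms \eqref{equ:bicat-isos} that are suppressed from the notation; one must check they cancel correctly, which is where a careful (but routine) bookkeeping is needed.

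For part (iii), the plan is to specialize part (ii) to $f = \et_X \co JX \to \mt X$, so that $({\et_X}^\da \comp \et_X)^\da$ and the relevant 2-cells appear, and then combine with the unit axiom \eqref{equ:unit-relativepseudomonad} for $f = \et_X$: that axiom relates $({\et_X}^\da)^\da$ via $(\meta_{\et_X})^\da$ and $\mmu_{\et_X, X}$ and $\mbeta_X$, and pasting it against part (ii) yields the triangle with the identity edge. Alternatively, apply naturality of $\meta$ to the 2-cell $\mbeta_X \co {\et_X}^\da \to 1_{\mt X}$ directly. The hard part, such as it is, will be part (ii): marshalling the associativity/unit coherence of the ambient bicategory $\bD$ so that the suppressed unit isomorphisms genuinely cancel; parts (i) and (iii) are then short corollaries of (ii) and the axioms. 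None of the steps requires anything beyond naturality and the two stated axioms, so no essential obstacle is anticipated.
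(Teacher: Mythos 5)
Your overall premise is right—the three triangles do follow from the associativity and unit axioms together with the naturality of $\mmu$ and $\meta$—but the specific naturality appeals your plan rests on do not typecheck, so the argument as proposed does not go through. The family $\meta$ is natural in $f \in \bD[JX,\mt Y]$: its naturality squares have edges $\phi$ and $\phi^\da \comp \et_X$ for a 2-cell $\phi$ between morphisms $JX \to \mt Y$. There is no naturality square of $\meta$ having $\mmu_{g,f}\comp\et_X$ as an edge, since $\mmu_{g,f}$ is not of the form $\phi^\da$ for any such $\phi$, and your claim that $\meta_{g^\da\comp f}$ is ``built from $\meta_f$ via the functoriality of $(-)^\da$'' is false: it is an independent component of the family $\meta$. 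Similarly for (ii): $\mmu$ is natural along 2-cells between morphisms $g \co JY \to \mt Z$ and $f \co JX \to \mt Y$, whereas $\mbeta_Y \co {\et_Y}^\da \to 1_{\mt Y}$ is a 2-cell between endomorphisms of $\mt Y$ and is not a legitimate input; moreover the term ``$\mmu_{1_{\mt Y},f}$'' you want to paste against is undefined, because $1_{\mt Y}$ is not a morphism with domain $JY$. The same objection defeats the ``alternative'' route to (iii) via naturality of $\meta$ at $\mbeta_X$. So none of the three commutativities is produced by the squares you describe, and, tellingly, your plan for (i) never actually uses the associativity axiom, which is essential there.

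What a correct proof requires is the adaptation of Kelly's argument for the redundancy of the extra unit axioms of a monoidal category (this is exactly what the paper's proof points to, together with its pseudomonad version in Marmolejo's Proposition~8.1). Concretely, for (i) one instantiates the associativity axiom \eqref{equ:assoc-relativepseudomonad} with $\et_X$ in the last slot (the triple $(\et_X, f, g)$), combines it with the unit axiom \eqref{equ:unit-relativepseudomonad} for $f$ and for $g^\da\comp f$ and with the genuine naturality squares of $\mmu$ and $\meta$, and then performs the cancellation step that is the heart of Kelly's proof: since each $\meta_h$ is invertible and natural, a 2-cell $\phi \co h \to k$ in $\bD[JX,\mt Z]$ is recovered from $\phi^\da$ as $\meta_k^{-1}\cdot(\phi^\da\comp\et_X)\cdot\meta_h$, so it suffices to verify the identity after applying $(-)^\da$ — the analogue of cancelling $I\otimes(-)$ via the invertible, natural left unitor. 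Part (ii) needs a parallel argument (associativity instantiated at $\et_Y$, the unit axiom, and a cancellation using the invertibility and naturality of $\mbeta$/$\meta$), and (iii) is then deduced from the previous parts by one more such cancellation, not by a direct naturality. This cancellation mechanism and the instantiated pentagon are exactly the content your plan defers as ``routine bookkeeping,'' so the proposal as it stands has a genuine gap rather than a complete argument.
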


\begin{proof} The proof is  a modified version of the proof of the
redundancy of three axioms in the original definition of a monoidal
category~\cite{KellyG:onmlcc} (see also~\cite{JoyalA:bratc}), which
has a version also for pseudomonads~\cite[Proposition~8.1]{MarmolejoF:docwsf}.
\end{proof}

\begin{proposition} \label{thm:no-iteration-is-rel-id} 
A no-iteration pseudomonad is the same thing as a relative pseudomonad over the identity.
\end{proposition}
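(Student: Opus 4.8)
The plan is to unwind both definitions in the special case $J = 1_{\bC}$ and to check that the data and the axioms match up, with \cref{thm:cohkle} accounting for the discrepancy in the number of axioms between the two presentations.

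First I would observe that when $J \colon \bC \to \bC$ is the identity pseudofunctor we have $JX = X$ and $Jf = f$ strictly, so that a relative pseudomonad $\mt$ over $1_{\bC}$ consists of precisely: an object $\mt X$ for each $X \in \bC$, a functor $(-)^\da \colon \bC[X, \mt Y] \to \bC[\mt X, \mt Y]$ for each $X, Y$, a morphism $\et_X \colon X \to \mt X$ for each $X$, and invertible $2$-cells $\mmu_{g,f} \colon (g^\da \comp f)^\da \to g^\da \comp f^\da$, $\meta_f \colon f \to f^\da \comp \et_X$, and $\mbeta_X \colon {\et_X}^\da \to 1_{\mt X}$, with $\mmu$ and $\meta$ natural. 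This is exactly the data of a no-iteration pseudomonad on $\bC$ in the sense of \cite[Definition~2.1]{MarmolejoF:noip}: the assignment $X \mapsto \mt X$, the extension operation $(-)^\da$, the unit $\et$, and the three structural families of invertible $2$-cells correspond one-to-one to those appearing there. Hence the substance of the proposition lies entirely in comparing the axioms.

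Then I would match the coherence conditions. Under the identification above, the associativity axiom \eqref{equ:assoc-relativepseudomonad} and the unit axiom \eqref{equ:unit-relativepseudomonad} of a relative pseudomonad over $1_{\bC}$ are precisely two of the axioms required of a no-iteration pseudomonad. The remaining axioms of \cite[Definition~2.1]{MarmolejoF:noip} are the analogues of the three triangles displayed in \cref{thm:cohkle}(i)--(iii); since \cref{thm:cohkle} derives those in the general relative setting from the two axioms of \cref{thm:relative-pseudomonad}, they hold in particular when $J = 1_{\bC}$. This gives one direction. Conversely, every no-iteration pseudomonad trivially satisfies the two axioms of \cref{thm:relative-pseudomonad}, since they are among its defining conditions, and so is a relative pseudomonad over the identity. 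The two assignments are visibly inverse to one another, establishing the claimed identification.

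The only delicate point, and hence the main obstacle, is the bookkeeping needed to line the two axiom lists up exactly: one must match each invertible $2$-cell of \cite[Definition~2.1]{MarmolejoF:noip} with $\mmu$, $\meta$, or $\mbeta$, verify that domains and codomains agree, and check that the naturality and functoriality clauses correspond. This is routine because both definitions are designed so that their Kleisli constructions yield the expected bicategory; \cref{thm:cohkle} is precisely what removes any genuine mathematical content from the comparison.
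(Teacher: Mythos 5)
Your proposal is correct and follows essentially the same route as the paper: the published proof likewise identifies the data and then pairs the axioms of \cref{thm:relative-pseudomonad}, the naturality clauses, and the three triangles of \cref{thm:cohkle} with the eight axioms of \cite[Definition~2.1]{MarmolejoF:noip}, noting that Axioms~1, 3 and~5 there are exactly the ones \cref{thm:cohkle} renders redundant. The only detail you pass over is that the invertible 2-cells in the two definitions are oriented in opposite directions, which is immaterial precisely because they are invertible.
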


\begin{proof} The two notions involve exactly the same data, except for the direction of
the invertible 2-cells. Then, using the numbering of axioms for a no-iteration pseudomonad 
in~\cite[Definition~2.1]{MarmolejoF:noip}, the equivalence between axioms for a relative pseudomonad
and those for a no-iteration pseudomonad are given as follows: 

\begin{table}[h]
\begin{center}
\begin{tabular}{ccc}
{\em Relative pseudomonads} & & {\em No-iteration pseudomonads} \\[0.5ex] 
Naturality of $\mmu$ & $\Leftrightarrow$ & Axioms 6 and 7 \\ 
Naturality of $\meta$ & $\Leftrightarrow$ & Axiom 4 \\  
Associativity axiom & $\Leftrightarrow$ & Axiom 8 \\ 
Unit  axiom & $\Leftrightarrow$ & Axiom 2 \\ 
\cref{thm:cohkle}, part~(i) & $\Leftrightarrow$ & Axiom 5 \\ 
\cref{thm:cohkle}, part~(ii) & $\Leftrightarrow$ & Axiom 3 \\
\cref{thm:cohkle}, part (iii) & $\Leftrightarrow$ & Axiom 1. 
\end{tabular} 
\end{center}
\end{table} 

\noindent
 Note that it follows that  Axioms 1, 3 and 5 for a no-iteration pseudomonad in~\cite[Definition~2.1]{MarmolejoF:noip} are redundant, in that they can be derived from the others.
\end{proof}

The next remarks use \cref{thm:no-iteration-is-rel-id} and the analysis of the relationship between ordinary pseudomonads and no-iteration pseudomonads in~\cite{MarmolejoF:noip} to show how a pseudomonad can
be regarded as a  relative pseudomonad over the identity $1_\bC \co \bC \to \bC$ and, conversely,
how every relative pseudomonad over the identity determines a pseudomonad.

\begin{remark}[From pseudomonads to relative pseudomonads] \label{thm:pseudo-to-rel}
The combination of \cite[Theorem~6.1]{MarmolejoF:noip} and~\cref{thm:no-iteration-is-rel-id} shows that every pseudomonad  $\mt \co \bC \to \bC$ with data as in~\cref{sec:background} induces a relative pseudomonad over the identity $1_\bC \co \bC \to \bC$. 
Explicitly, for $X \in \bC$, we already
have $\mt  X \in \bC$ and a morphism $\et_X \co X \to \mt  X$ 
as part of the pseudomonad structure. 
For a morphism $f \co X \rightarrow \mt  Y$, 
we define $f^\da  \co \mt  X \rightarrow \mt  Y$ by letting $f^\da \defeq \mut_Y \comp \mt (f)$. 
The three families of invertible 2-cells $\mmu$, $\meta$, $\mbeta$ for a pseudomonad
are then obtained in an evident way. For example, 
for $f \co X \to \mt  Y$, we let $\meta_f \co f \to f^\da \et_X$ be the composite 2-cell 
\[
\xymatrix{
f \ar[r]^-{\lambda} &
 \mut_Y \comp \et_{\mt Y} \comp f \ar[r]^-{\iso} & 
 \mut_Y \comp \mt(f) \comp \et_X  \, .}
 \]
 where the unnnamed isomorphism 2-cell is a pseudonaturality of $\et$. 
\end{remark}

\begin{remark}[From relative pseudomonad over the identity to pseudomonads] \label{thm:rel-to-pseudo}
The combination of ~\cref{thm:no-iteration-is-rel-id} and \cite[Theorem~3.6]{MarmolejoF:noip}  shows that every relative pseudomonad over an identity 
pseudofunctor induces a pseudomonad. The explicit definitions are a bit involved, and therefore checking the coherence diagrams 
directly is not straightforward, but we shall outline a more conceptual account of the 
construction of a pseudomonad from a relative pseudomonad in~\cref{thm:establish-coherence}.
\end{remark}

We introduce a generalization of the notion of pseudoadjunction  between bicategories~\cite{BungeM:cohera,StreetR:fibb},
extending to the 2-categorical setting the notion of a relative adjunction 
considered in~\cite{UlmerF:prodraf} and~\cite[Section~2.2]{AltenkirchT:monnnb}.

\begin{definition} \label{def:frec} 
Let $G \co \bE \to \bD$ be a pseudofunctor. 
A \myemph{relative left pseudoadjoint} $F$ to $G$ over $J \co \bC \to \bD$, 
denoted
\[
\xymatrix{
                       & \bE \ar[d]^{G} \\
\bC \ar[r]_{J}  \ar[ur]^{F} & \bD \, ,}
\]
consists of 
\begin{itemize} 
\item an object $FX \in \bE$, for every object $X \in \bC$; 
\item a family of morphisms $\et_X \co JX \rightarrow GFX$, for $X \in \bC$; 
\item a family of adjoint equivalences
\begin{equation}
\label{equ:frec}
\xymatrix{
    \bD[JX, GA] 
 \ar@<1.2ex>[rr]^(.52){(-)^\sh}
 \ar@{}[rr]|{\bot}    
 & & \ar@<1.2ex>[ll]^{G(-)  \, \et_X} \bE[FX, A] \, , }
\end{equation}
for $X \in \bC$, $A \in \bE$.
\end{itemize} 
\end{definition}

For a relative pseudoadjunction 
as in Definition~\ref{def:frec}, the components of the unit and counit of the adjoint 
equivalences in~\eqref{equ:frec} will be written 
\[
 \meta_f \co f \rightarrow G(f^\sh) \comp \et_X  \, , \quad
 \mepsilon_u \co ( G(u) \comp \et_X)^\sh \rightarrow u \, , 
\]
respectively, where $f \co JX \rightarrow GA$ and $u \co FX \rightarrow A$. 
Note that  a relative pseudoadjunction over the identity $1_\bC \co \bC \to \bC$ is equivalent to a 
pseudoadjunction in the usual sense~\cite{KellyG:eleoot,StreetR:fibb}.
We now establish a relative variant of a standard fact that a pseudoadjunction of bicategories gives rise
to a pseudomonad~\cite{KellyG:eleoot,StreetR:fibb}, namely that 
a relative pseudoadjunction determines a relative pseudomonad. 
The next lemma will be useful. 

\begin{lemma} 
\label{adjfunct} 
Let
\[
\xymatrix{
                       & \bE \ar[d]^{G} \\
\bC \ar[r]_{J}  \ar[ur]^{F} & \bD}
\]
be a relative pseudoadjunction. Then there is an essentially unique way of
extending the function mapping $X \in \bC$ to $FX \in \bE$ to
a pseudofunctor $F \co \bC \rightarrow \bE$ so that the maps
$\et_X \co JX \to GFX$, for $X \in \bC$, become the 1-cell components of
a pseudonatural transformation $\et \co J \Rightarrow GF$.
\end{lemma}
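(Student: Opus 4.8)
The plan is to build the pseudofunctor structure on $F$ by transporting, along the adjoint equivalences~\eqref{equ:frec}, the structure that $\et$ and $G$ already carry. First I would define $F$ on $1$-cells: given $u \co X \to Y$ in $\bC$, the composite $\et_Y \comp Ju \co JX \to GFY$ lies in $\bD[JX, GFY]$, and I would set $Fu \defeq (\et_Y \comp Ju)^\sh \co FX \to FY$. With this choice the counit $\mepsilon$ of the adjoint equivalence at $u=Fu$ furnishes, after unwinding, an invertible $2$-cell exhibiting the pseudonaturality square for $\et$, i.e. a coherent isomorphism $\et_Y \comp Ju \iso GFu \comp \et_X$; naturality of $\meta$ and $\mepsilon$ in the variable $u$ then makes $Fu$ functorial in $2$-cells of $\bC$ and makes these squares natural. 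The compositor $F(v) \comp F(u) \iso F(v \comp u)$ and the unitor $1_{FX} \iso F(1_X)$ are obtained by the universal property: both sides, when hit by $G(-) \comp \et_X$, become canonically isomorphic (using the pseudofunctoriality isomorphisms of $G$ and of $J$, the pseudonaturality squares for $\et$ just constructed, and the triangle identities of the adjoint equivalences), so the fully-faithful half of~\eqref{equ:frec} lifts that isomorphism uniquely to $\bE$.

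Next I would verify the pseudofunctor axioms for $F$ — the associativity pentagon relating the compositors and the two unit triangles — and the pseudonaturality (hexagon/square) axioms for $\et \co J \Rightarrow GF$. The strategy throughout is the same: since each $G(-) \comp \et_X \co \bE[FX,A] \to \bD[JX,GA]$ is an equivalence, and in particular faithful, it suffices to check every required equation of $2$-cells after applying these functors, where the diagrams reduce — via the coherence of $G$ and $J$ as pseudofunctors, the already-established naturality of the pseudonaturality squares for $\et$, and the triangle identities — to instances of the bicategory coherence theorem, which we may invoke freely. This is bookkeeping rather than genuine difficulty.

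For essential uniqueness I would argue that the requirement that the given $\et_X$ be the $1$-cell components of a pseudonatural transformation $\et \co J \Rightarrow GF$ forces $GFu \comp \et_X \iso \et_Y \comp Ju$, whence $Fu \iso (\et_Y \comp Ju)^\sh$ by applying the adjoint equivalence; any two such choices of $F$ are therefore connected by a canonical invertible icon, and the compositors/unitors are then determined by the same universal property, so $F$ is unique up to a unique invertible modification compatible with the $\et$-structure. The main obstacle I anticipate is purely organizational: marshalling the several layers of coherence data — those of $G$, of $J$, of the adjoint equivalences in~\eqref{equ:frec}, and of the ambient bicategories — into each verification without the diagrams becoming unmanageable; the conceptually clean way to handle this is to phrase the whole construction as the observation that $G(-)\comp\et_{(-)}$ assembles into a family of equivalences natural enough to transport pseudofunctor structure, so that $F$ is, up to equivalence, $\bC(-,-)$ reflected through these equivalences. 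Morally this is the relative, bicategorical form of the standard fact that a right adjoint determines its left adjoint up to unique isomorphism, and no new phenomena arise beyond the need to track invertible $2$-cells where the $1$-categorical argument tracks equalities.
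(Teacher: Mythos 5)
Your proposal is correct and follows essentially the same route as the paper: define $F(u) \defeq (\et_Y \comp J(u))^\sh$, take the pseudonaturality squares for $\et$ from the adjoint equivalences in~\eqref{equ:frec}, transport the compositors and unitors through $G(-) \comp \et_X$ using the pseudofunctoriality of $G$, and verify coherence by exploiting that these hom-functors are equivalences. The only minor differences are that the paper obtains the pseudonaturality $2$-cells directly from the unit $\meta$ (rather than by unwinding the counit, as you suggest) and leaves essential uniqueness implicit where you spell it out; neither affects correctness.
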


\begin{proof}  For a morphism $f \co X \to Y$ in $\bC$, we define $F(f) \defeq (\et_Y J(f))^\sh$. The unit of the
adjoint equivalence in~\eqref{equ:frec} gives us invertible 2-cells
\begin{equation}
\label{equ:psnatet}
\psi_{f} \co \et_Y \comp J(f) \to GF(f) \comp \et_X 
\end{equation}
for $f \co X \to Y$. For $f \co X \to Y$, $g\co Y \to Z$, we need an invertible 2-cell $\phi_{g, f} \co F(g \comp f) \to F(g) \comp F(f)$.
By the definition, we have
\[
F(g \comp  f)  = (\et_Z \comp J(g \comp f))^\sh \iso  (\et_Z \comp J(g) \comp J(f))^\sh  \, , \quad
F(g) \comp F(f)  = \big( G  \big(   F(g)  \comp F(f)   \big)  \comp \et_X  \big)^\sh \, . 
\]
So we can define $\phi_{g, f}$ using the composite
\[
\xymatrix{
(\et_Z \comp J(g) \comp J(f))^\sh \ar[r]^-{\psi^{-1}} &  
 \big( GF(g) \comp \et_Y \comp J(f) \big)^\sh  \ar[r]^-{\psi}  & 
  \big( GF(g)  \comp GF(f) \comp \et_X \big)^\sh  \ar[r]^-{\iso} &
 \big( G  \big(   F(g)  \comp F(f)   \big)  \comp \et_X  \big)^\sh \, ,}
 \]
 where the unnamed isomorphism is given by the pseudofunctoriality of $G$.
For $X \in \bC$, we need an invertible 2-cell $\phi_X \co F(1_X) \to 1_{FX}$. By definition, 
$F(1_X)  = (\et_X)^\sh = (G(1_{FX}) \comp \et_X)^\sh$, 
and so we define $\phi_X$ to be $\mepsilon_{1_{FX}} \co (G(1_{FX}) \comp \et_X)^\sh \to 1_{FX}$. With these definitions, the pseudonaturality 2-cells for $\et \co J \Rightarrow GF$ are 
the 2-cells in~\eqref{equ:psnatet}. 
The proof of the coherence conditions is routine~(\cf~\cite{KellyG:eleoot,StreetR:fibb}).
\end{proof}

\begin{theorem}  \label{thm:adjkls} 
Let 
\[
\xymatrix{
                   & \bE \ar[d]^G \\
\bC \ar[ur]^F \ar[r]_J & \bD       }
\]
be a relative pseudoadjunction. Then the function sending $X \in \bC$ to $GF(X) \in \bD$ admits the
structure of a relative pseudomonad over $J$.
\end{theorem}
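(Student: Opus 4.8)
The plan is to read off the data of the relative pseudomonad $T$ directly from the relative pseudoadjunction, using \cref{adjfunct} to regard $F$ as a pseudofunctor $\bC \to \bE$ and $\et$ as a pseudonatural transformation $J \Rightarrow GF$, and then to verify the axioms by reducing them to the triangle identities for the adjoint equivalences in~\eqref{equ:frec}. First I would set $\mt X \defeq GFX$ for $X \in \bC$, with unit $\et_X \co JX \to GFX$ the one given as part of the relative pseudoadjunction, and define $(-)^\da_{X,Y} \co \bD[JX, \mt Y] \to \bD[\mt X, \mt Y]$ as the composite of the functor $(-)^\sh \co \bD[JX, GFY] \to \bE[FX, FY]$ with $G \co \bE[FX, FY] \to \bD[GFX, GFY]$, so that $f^\da = G(f^\sh)$. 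The right-unit 2-cells $\meta_f \co f \to f^\da \comp \et_X$ are then precisely the components of the unit of the adjoint equivalence in~\eqref{equ:frec}, and are natural and invertible for that reason.

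For the remaining structure, given $f \co JX \to \mt Y$ and $g \co JY \to \mt Z$, I would define $\mmu_{g,f} \co (g^\da \comp f)^\da \to g^\da \comp f^\da$ as the composite obtained by applying $(-)^\sh$ to the 2-cell $g^\da \comp f \to G(g^\sh) \comp G(f^\sh) \comp \et_X$ built from $G(g^\sh) \comp \meta_f$ and the pseudofunctoriality constraint of $G$, then postcomposing with the counit component $\mepsilon_{g^\sh \comp f^\sh}$, applying $G$, and finally composing with the pseudofunctoriality constraint of $G$ again to reach $G(g^\sh) \comp G(f^\sh) = g^\da \comp f^\da$; and for $X \in \bC$ I would define $\mbeta_X \co {\et_X}^\da \to 1_{\mt X}$ from the counit component $\mepsilon_{1_{FX}}$ together with the unit constraint of the pseudofunctor $G$, exactly as in the last step of the proof of \cref{adjfunct}. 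All three families consist of invertible 2-cells since they are pasted from invertible ones. Naturality of $\mmu$ follows from naturality of $\mepsilon$ (a transformation of functors), naturality of the pseudofunctoriality constraint of $G$, and bicategorical coherence.

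It then remains to check the associativity axiom~\eqref{equ:assoc-relativepseudomonad} and the unit axiom~\eqref{equ:unit-relativepseudomonad}. For both, I would first invoke the coherence theorem for bicategories~\cite{MacLaneS:coh-bicategories} to treat $\bC$, $\bD$, $\bE$ as if they were 2-categories, suppressing the unnamed associativity and unit isomorphisms, and then unfold the definition of each occurrence of $\mmu$ in the diagram into its constituent applications of $\meta$, $\mepsilon$, and the coherence constraints of $G$. The unit axiom then reduces, after one triangle identity for the adjoint equivalence of~\eqref{equ:frec} and the compatibility of the unit and associativity constraints of $G$, to a diagram of coherence cells. The associativity axiom is the main obstacle: it unfolds into a large diagram whose commutativity must be organised so that each region is manifestly an instance of naturality of $\mepsilon$, one of the two triangle identities, the pseudofunctoriality coherence of $G$ (its `associativity pentagon'), or bicategorical coherence; arranging the chase this way is the only real work. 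The argument is a combined `relative' and `pseudo' version of the classical facts that an adjunction induces a monad and that a relative adjunction induces a relative monad~\cite[Section~2.2]{AltenkirchT:monnnb}, and the resulting $T$ is the relative pseudomonad asserted by the theorem.
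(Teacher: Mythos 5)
Your construction of the data agrees with the paper's: $\mt X = GFX$, $f^\da = G(f^\sh)$, $\meta_f$ the unit of the adjoint equivalence \eqref{equ:frec}, and $\mmu_{g,f}$, $\mbeta_X$ built from $\meta$, $\mepsilon$ and the constraint cells of $G$ exactly as in the paper. The divergence is in how the two axioms \eqref{equ:assoc-relativepseudomonad} and \eqref{equ:unit-relativepseudomonad} are verified, and this is where your proposal has a genuine gap: you acknowledge that the associativity check is ``the only real work'' and then leave it as a claim that the unfolded diagram can be organised so that every region is ``manifestly'' a naturality of $\mepsilon$, a triangle identity, a constraint coherence of $G$, or bicategorical coherence. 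That is a plan, not a proof, and as stated your toolkit is also incomplete: since $(-)^\sh$ is only a family of hom-functors whose compatibility with composition is mediated entirely by the unit and counit, the unfolded diagram also needs naturality of $\meta$, interchange, and a non-obvious auxiliary square of the kind displayed in \eqref{equ:sublemma} (the paper carries out precisely such a direct chase later, in the proof that (v) implies (i) in \cref{prop:laxid}, and it is not a routine coherence argument).

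The paper avoids this chase altogether by a universality argument you do not mention, and which is the real content of its proof: restating \eqref{equ:frec}, every 2-cell $\phi \co f \to G(u) \comp \et_X$ has a unique transpose $\psi \co f^\sh \to u$ with $(G(\psi)\comp\et_X)\circ\meta_f = \phi$. This characterises $\mmu_{g,f}$ and $\mbeta_X$ (up to constraint cells of $G$) as $G$-images of uniquely determined 2-cells $(G(g^\sh)\comp f)^\sh \to g^\sh\comp f^\sh$ and ${\et_X}^\sh \to 1_{FX}$. One then checks that the two legs of each axiom, both of which are composites of $G$-images of such transposes, become equal after whiskering with $\et_X$ and precomposing with $\meta$ (two small commuting diagrams, using the triangles characterising $\mmu$ and $\mbeta$, i.e.\ the identities of \cref{thm:cohkle}(i),(iii)), and uniqueness of transposes finishes the argument. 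Either you should import this uniqueness-of-transposes reduction, or you must actually exhibit the direct diagram chase you promise; in its current form the proposal asserts, rather than proves, the coherence conditions.
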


\begin{proof} For  $X \in \bC$, define $\mt  X \defeq GFX$.  The relative pseudoadjunction gives morphisms $\et_X \co X \rightarrow \mt  X$, for
$X \in \bC$, and adjoint equivalences
\begin{equation}
\label{equ:frecc}
\xymatrix{
    \bD[JX, GFY] 
 \ar@<1.2ex>[rr]^(.52){(-)^\sh}
 \ar@{}[rr]|{\bot}    
 & & \ar@<1.2ex>[ll]^{G(-)  \, \et_X} \bE[FX, FY] }
\end{equation}
for $X, Y \in \bC$. We then define  $(-)^\da  \co \bD[JX,\mt  Y] \to \bD[\mt  X,\mt  Y]$
by letting $f^\da \defeq G( f^\sh)$. It now remains to define the families of
invertible 2-cells $\mmu$, $\meta$ and $\mbeta$. 
For~$\mmu_{g, f} \co (g^\da \comp f)^\da \rightarrow g^\da \comp f^\da$, observe that
\[
(g^\da \comp f)^\da = G (G(g^\sh) \comp f)^\sh \, , \quad 
g^\da \comp f^\da  = G(g^\sh) \comp G(f^\sh) \, ,
\]
and so we define $\mmu_{g,f}$ to be the composite 
\[
\xymatrix{
 G\big( G(g^\sh) \comp f \big)^\sh \ar[r]^-{\eta} &
 G\big( G(g^\sh) \comp G(f^\sh) \comp \et_X \big)^\sh \ar[r]^-{\iso} &
    G( G(g^\sh \comp f^\sh) \comp \et_X)^\sh \ar[r]^-{\varepsilon} &
   G(g^\sh \comp f^\sh)  \ar[r]^-{\iso} &  
    G(g^\sh) \comp G(f^\sh) \, .  }
 \]
The 2-cells  $\meta_f \co f \rightarrow f^\da \comp \et_X$ are given by 
the units of the adjunction~(\ref{equ:frecc}), which satisfy the required naturality condition.
For $\mbeta_X \co {\et_X}^\da \rightarrow 1_{\mt X}$, we recall that ${\et_X}^\da = G({\et_X}^\sh)$, and so we define $\mbeta_X$ to be the composite 2-cell
\[
\xymatrix{
 G({\et_X}^\sh) \ar[r]^-{\eta} & 
  G \big((G(1_{FX}) \comp \et_X)^\sh \big)  \ar[r]^-{\varepsilon} &  G(1_{FX}) \ar[r]^{\iso}  & 
   1_{GFX} \, .}
 \]
It remains to establish
the coherence  conditions. While it is possible to show this
directly, it is more illuminating to argue in terms of universal
properties. Simply restating the adjunction in~\eqref{equ:frecc}, we observe that, given
$f \co JX \rightarrow GA$ in $\bD$ and $u \co FX \rightarrow A$ in $\bE$,
for every 2-cell $\phi \co f \rightarrow G(u) \comp \et_X$, there is a 
unique 2-cell $\psi \co f^\sh \rightarrow u$, the adjoint transpose,
such that the diagram 
\[
\xymatrix@C+3ex{
   f \ar[r]^(.4){\meta_f} \ar@/_/[dr]_{\phi} 
& G(f^\sh) \comp \et_X \ar[d]^{G(\psi) \comp \et_X} \\
                             & G(u)   \comp \et_X                  }
\]
commutes. Accordingly, we can characterize  $\mmu_{g,f}$ and $\mbeta_X$  as
follows. There are  2-cells
\[
\tilde{\kappa}_{g, f} \co G( G(g^\sh) \comp f)^\sh \rightarrow G(g^\sh \comp f^\sh) \, , \qquad
\tilde{\kappa}_X \co G({\et_X}^\sh) \rightarrow G(1_{FX}) 
\]
being the image under $G$ of the unique 2-cells  $(G(g^\sh) \comp f)^\sh \rightarrow g^\sh \comp f^\sh$
and ${\et_X}^\sh \rightarrow 1_{FX}$ 
such that the  diagrams 
\[
\xymatrix@C=1.3cm{
  G(g^\sh) \comp f \ar[r]^-{\meta} \ar[d]_{\eta}  &
   G( G(g^\sh) \comp f)^\sh \comp \et_X  \ar[d]^{\tilde{\kappa}_{g,f} \comp \et_X} \\
   G(g^\sh) \comp G(f^\sh) \comp \et_X  \ar[r]_-{\iso}    & G(g^\sh \comp f^\sh) \comp  \et_X          } \qquad 
\xymatrix@C=1.3cm{
   \et_X \ar[r]^-{\meta} \ar[d]_{\iso}  &  
   G({\et_X}^\sh) \comp \et_X \ar[d]^{ \tilde{\kappa}_X \comp \et_X} \\
     1_{GFX} \comp \et_X \ar[r]_-{\iso}   & G(1_{FX}) \et_X    }
\]
commute. The 2-cells $\mmu_{g, f}$ and $\mbeta_X$ then arise by composing these 2-cells with  
pseudofunctoriality 2-cells of $G$. 
The coherence diagrams follow readily, and we give details in the 
2-categorical case, where the characterizing diagrams for $\mmu_{g, f}$
and $\mbeta_X$ reduce to the diagrams
\[
\xymatrix{
   \et_X              \ar[rr]^(.45){\meta} 
                      \ar@/_/[drr]_{1}          &  & 
   {\et_X}^\da \comp \et_X 
                      \ar[d]^{\mbeta_X \comp \et_X}   \\
                                                    &  &  
   \et_X                \, ,  } 
\qquad
\xymatrix{
  g^\da \comp f \ar[rr]^(.45){\meta} \ar@/_/[drr]_(.42){g^\da \comp \meta_f} & &
   (g^\da \comp f)^\da \comp \et_X  \ar[d]^{\mmu_{g, f} \comp \et_X} \\
                             & & g^\da \comp f^\da \comp \et_X  \, .        }
\]
For the associativity condition in~\eqref{equ:assoc-relativepseudomonad},  
we have commuting diagrams
\[
\xymatrix{
(h^\da \comp g)^\da \comp f \ar[r]^(.43){\meta} 
\ar@/_/[ddr]_(.6){(h^\da \comp g)^\da \comp \meta} \ar[ddd]_{\mmu_{h, g} \comp f} &
\big( (h^\da \comp g)^\da \comp f)^\da \comp \et_X
\ar[dd]^{\mmu_{h^\da \comp g, f } \comp \et_X \, ,} \\
  & \\
   &
(h^\da \comp g)^\da \comp f^\da \comp \et_X \ar[ddd]^{\mmu_{h,g} 
\comp f^\da \comp \et_X} \\
h^\da \comp g^\da \comp f \ar@/_/[ddr]_{h^\da \comp g^\da \comp \meta} & 
                              \\
   & \\
  &
h^\da \comp g^\da \comp f^\da \comp \et_X } \qquad \qquad
\xymatrix{
(h^\da \comp g)^\da \comp f \ar[r]^(.43){\meta} 
               \ar[dd]_{\mmu_{g, h} \comp f} & 
\big( (h^\da \comp g)^\da \comp f \big)^\da \comp \et_X
\ar[dd]^{(\mmu_{h,g} \comp f)^\da \comp \et_X} \\
        & \\
h^\da \comp g^\da \comp f \ar[r]^{\meta} \ar@/_1pc/[dr]_(.45){h^\da \comp \meta}
 \ar@/_2pc/[dddr]_{h^\da \comp g^\da \comp \meta} & 
(h^\da \comp g^\da \comp f)^\da \comp \et_X \ar[d]^{\mmu_{h, g^\da \comp f} \comp \et_X} \\
                                           &
h^\da \comp (g^\da \comp f)^\da \comp \et_X
\ar[dd]^{h^\da \comp \mmu_{f,g} \comp \et_X} \\
                                           &
                                           \\
                                           &
h^\da \comp g^\da \comp f^\da \comp \et_X  \, .      }
\]
Note that the triangles in these diagrams commute by part~(i) of~\cref{thm:cohkle}. 
Since both of the composites on the right-hand side of the diagrams
lie in the image of $G$, we deduce by universality that they are equal,
as required. For the unit condition in~\eqref{equ:unit-relativepseudomonad} we have a commuting diagram
\[
\xymatrix{
f 
   \ar[rr]^(.45){\meta_f} \ar[d]_{\meta_f}            &  &      
f^\da \comp \et_X  
   \ar[d]^{{\meta_f}^\da \comp \et_X}                 \\
f^\da \comp \et_X   
   \ar[rr]^(.45){\meta_{ f^\da \comp \et_X}} 
\ar@/_/[drr]_{f^\da \comp \meta_{\et_X}} \ar@/_2pc/[ddrr]_{1} & &
(f^\da \comp \et_X)^\da \comp \et_X 
   \ar[d]^{\mmu_{f, \et_X} \comp \et_X}                  \\
                           & &
f^\da \comp {\et_X}^\da \comp \et_X
   \ar[d]^{f^\da \comp \mbeta_X \comp \et_X}                  \\ 
                            & &
f^\da \comp \et_X  \, .         }
\]
Here, the triangles commute by part~(i) and~(iii) of \cref{thm:cohkle}. 
Again, since the composite of~$(f^\da \comp \mbeta_X ) \comp
 (\mmu_{f, \et_X} ) \comp 
 ({\meta_f}^\da )$
lies in the image of $G$ we deduce by universality
that it equals the identity, as required. \end{proof}

Using~\cref{thm:adjkls}, we can introduce our fundamental example
of a relative pseudomonad, given by the presheaf construction.\footnote{The 
possibility of introducing a notion of relative pseudomonad to include 
 the presheaf construction as an example was mentioned in~\cite[Example~2.7]{AltenkirchT:monnnb}.}

\begin{example} \label{exa:adj-psh} \label{exa:psh}
There is a relative pseudoadjunction of the form
\[
\xymatrix{
                       & \COC \ar[d]^{U} \\
\Cat \ar[r]_-{J}  \ar[ur]^{\pshh} & \CAT \, ,}
\]
where $\COC$ is the 2-category of locally small cocomplete 
categories, cocontinuous functors, and natural transformations,
and  $U \co \COC \rightarrow \CAT$ is the evident forgetful
functor. The category~$\pshX  \defeq [\catX^\op, \Set]$  of presheaves over 
a small category~$\catX$ 
is the colimit completion of~$\catX$ in the sense that, for every locally small
cocomplete~$\catA$, composition with the Yoneda embedding 
$\yo_\catX \co \catX \rightarrow \pshX$ induces an equivalence of categories
\[
\xymatrix@C=1.5cm{
  \CAT[\catX, \catA] &  \ar[l]_-{U(-) \comp \yon_\catX}
  \COC[\pshX, \catA] \, .}
\]
Thus $\pshh$ provides a relative left pseudoadjoint 
to $U$. 
By~\cref{thm:adjkls} we obtain a relative pseudomonad over the inclusion $J \co \Cat \to \CAT$. 
For  a functor $F \co \catX \rightarrow \pshY$, $F^\da \co
\pshX \rightarrow \pshY$ is the  left Kan extension of $F$ along the Yoneda
embedding, defined by the coend formula
\begin{equation}
\label{equ:lan}
F^\da(p)(y) \defeq \int^{x \in \catX} F(x)(y) \times p(x) \, . \
\end{equation}
The invertible 2-cells $\meta_F$ fit into the diagram
\[
\xymatrix{
\catX \ar[r]^-{\yo_X}  \ar@/_1pc/[dr]_{F}  \ar@{}[dr]|{\; \stackrel{\meta_F \; }{\Rightarrow}}
& \pshX \ar[d]^{F^\da} \\
 & \pshY \rlap{.}}
 \]
The 2-cells $\mmu_{F,G}$ and $\mbeta_\catX$ are uniquely determined by
the universal property of left Kan extensions.

There is an analogous relative pseudomonad arising from 
the relative pseudoadjunction
\[
\xymatrix{
                       & \FIL \ar[d]^{U} \\
\Cat \ar[r]_-{J}  \ar[ur]^{\ind} & \CAT \, ,}
\]
where $\FIL$ is the 2-category of locally small
categories with filtered colimits, 
 functors preserving such colimits, and 
all natural transformations~\cite{ArtinM:SGA41}. 
Here, for $\catX \in \Cat$, we define 
$\indX \in \CAT$ to be the full subcategory of $\pshX$ spanned by small filtered colimits
of representables.
 \end{example}

\section{Kleisli bicategories}
\label{sec:kleb}

We  introduce the Kleisli bicategory of a relative pseudomonad, extending to the 2-dimensional setting the 
definition of the Kleisli category of a relative monad~\cite[Section~2.3]{AltenkirchT:monnnb}.

\begin{theorem}\label{thm:kbicat} Let $\mt$ be a relative pseudomonad over $J \co \bC \rightarrow \bD$. Then there is a bicategory~$\kl(\mt )$, called the Kleisli bicategory
of $\mt$, having the objects of $\bC$ as objects, and hom-categories given by 
$\kl(\mt )[X, Y] \defeq \bD[J X, \mt  Y]$,  for $X, Y \in \bC$. 
\end{theorem}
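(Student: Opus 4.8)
The plan is to build the bicategory $\kl(\mt)$ explicitly, using the data of the relative pseudomonad to supply composition, identities, and the coherence isomorphisms, and then to verify the two bicategory axioms (the pentagon and the triangle). First I would set objects of $\kl(\mt)$ to be the objects of $\bC$ and put $\kl(\mt)[X,Y] \defeq \bD[JX, \mt Y]$, so each hom-category is automatically a category. For the horizontal composition functor $\kl(\mt)[Y,Z] \times \kl(\mt)[X,Y] \to \kl(\mt)[X,Z]$, given $f \co JX \to \mt Y$ and $g \co JY \to \mt Z$ I set $g \kcomp f \defeq g^\da \comp f$, using the extension operation $(-)^\da$; this is functorial in both arguments because $(-)^\da$ is a functor and $\comp$ in $\bD$ is a functor, and one uses the interchange law of the bicategory $\bD$. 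The identity $1_X$ in $\kl(\mt)$ is $\et_X \co JX \to \mt X$.

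Next I would exhibit the structural isomorphisms. For associativity I need, for $f \co JX \to \mt Y$, $g \co JY \to \mt Z$, $h \co JZ \to TV$, an invertible $2$-cell $(h \kcomp g) \kcomp f \to h \kcomp (g \kcomp f)$, i.e. $(h^\da \comp g)^\da \comp f \to h^\da \comp (g^\da \comp f)$. This is obtained by whiskering the associativity $2$-cell $\mmu_{h,g} \co (h^\da \comp g)^\da \to h^\da \comp g^\da$ of $\mt$ with $f$ on the right and composing with the associativity isomorphism of $\bD$, giving $(h^\da \comp g)^\da \comp f \xrightarrow{\mmu_{h,g} \comp f} (h^\da \comp g^\da) \comp f \xrightarrow{\iso} h^\da \comp (g^\da \comp f)$. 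Invertibility is immediate since $\mmu$ is invertible and $\bD$'s associator is. For the right and left unit isomorphisms I use $\meta$ and $\mbeta$: the right unitor $f \kcomp 1_X = f^\da \comp \et_X \to f$ is $(\meta_f)^{-1}$, and the left unitor $1_Y \kcomp f = {\et_Y}^\da \comp f \to f$ is the whiskering $\mbeta_Y \comp f$ followed by the left unit isomorphism of $\bD$. Naturality of these families in $f, g, h$ follows from the naturality clauses for $\mmu$, $\meta$ in \cref{thm:relative-pseudomonad} together with naturality of $\bD$'s structural isomorphisms.

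It then remains to check the pentagon identity for the four-fold composite and the triangle identity relating the associator and the unitors. The pentagon is precisely where the associativity axiom~\eqref{equ:assoc-relativepseudomonad} of the relative pseudomonad is used: the diagram in that axiom, after whiskering on the right by $f$ and pasting with instances of the associativity isomorphism of $\bD$ (which themselves satisfy the pentagon in $\bD$), translates into the required pentagon in $\kl(\mt)$. Similarly the triangle identity reduces, after whiskering, to the unit axiom~\eqref{equ:unit-relativepseudomonad} together with the bicategory axioms of $\bD$. I expect the pentagon verification to be the main obstacle: it is a moderately large diagram chase in which one must carefully track the several copies of $\bD$'s associator and the naturality squares for $\mmu$, and reduce everything to axiom~\eqref{equ:assoc-relativepseudomonad}; the unit/triangle check is comparatively routine, handled in the same way using~\eqref{equ:unit-relativepseudomonad} and, if needed, \cref{thm:cohkle}. (As the paper notes, in the motivating example $\bC = \Cat$, $\bD = \CAT$, $\mt = \pshh$, this recovers the bicategory $\Prof$, giving the promised conceptual construction of $\Prof$.)
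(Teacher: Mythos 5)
Your construction coincides with the paper's own proof: the same composition $g\kcomp f=g^\da\comp f$, identity $\et_X$, associator built from $\mmu_{h,g}\comp f$ and $\bD$'s associator, unitors from $\meta$ and $\mbeta$, with the pentagon reduced to the associativity axiom~\eqref{equ:assoc-relativepseudomonad} plus naturality and coherence of $\bD$'s associator, and the triangle to the unit axiom~\eqref{equ:unit-relativepseudomonad}. The only difference is that the paper carries out the two diagram chases you describe explicitly, using exactly the ingredients you identify, so your proposal is correct and takes essentially the same route.
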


\begin{proof} We begin by defining composition in $\kl(\mt )$. Let $f \co JX \rightarrow \mt  Y$ and $g \co J Y \rightarrow \mt  Z$. We define
$g \kcomp f \co J X \rightarrow \mt Z$ as the composite in $\bD$
\[
\xymatrix{
JX \ar[r]^{f} & \mt Y \ar[r]^{g^\da} & \mt Z \, .}
\]
This obviously extends to 2-cells, so as to obtain the required composition functors. 
For $X \in \bC$, the identity morphism on $X$ in $\kl(\mt)$ is $\et_X \co J X \rightarrow \mt  X$.
For the associativity isomorphisms, let $f \co JX \to \mt Y$, $g \co JY \to \mt Z$ and $h \co
JZ \to \mt V$. Since 
\[
(h \kcomp g) \kcomp f = (h^\da \comp g)^\da f \, , \quad
h \kcomp (g \kcomp f) = h^\da \comp (g^\da \comp f) \, , 
\]
we define the associativity isomorphism $\alpha_{h, g, f} \co (h \kcomp g) \kcomp f  \to  h \kcomp (g \kcomp f)$ to be the composite
2-cell 
\[
\xymatrix@C+2ex{
 (h^\da \comp g)^\da \comp  f \ar[r]^-{\mmu_{h, g} \comp f } &
(h^\da \comp g^\da) \comp f \ar[r]^{\iso}  &
h^\da \comp (g^\da \comp f) \, .}
\]
For the right and left unit, let $f \co JX \to \mt Y$. Since $f \kcomp \et_X = f^\da \comp \et_X$, we
define $\rho_f \co f \kcomp \et_X \to f$ to  be the 2-cell $\meta_f \co f \to f^\da \comp \et_X$. 
Since $\et_Y \kcomp f = {\et_Y}^\da \comp f$, we define $\lambda_f \co \et_Y \kcomp f \to f$ to be the composite 2-cell
\[
 \xymatrix@C+2ex{
{\et_Y}^\da \comp f \ar[r]^{\mbeta_Y \comp f} & 1_{\mt Y} \comp f \ar[r]^{\iso} &  f  \, .}
\] 
We now need to show that these natural isomorphisms satisfy the required coherence conditions. 
We give the proof making explicit the bicategorical structure of $\bC$ and $\bD$. 
We only need to show that the associativity, left unit, and right unit isomorphisms 
satisfy the coherence conditions for a bicategory. 
The coherence axiom for associativity is obtained via the following diagram:
\[
\xymatrix{
    &
    &
    (( k^\da \comp h)^\da \comp g)^\da \comp f \ar[dr]^{\mmu \comp f} \ar[dl]_{(\mmu \comp g)^\da \comp f} & 
    &
    \\
    &
    (k^\da \comp h^\da) \comp g)^\da \comp f \ar[dl]_{\iso} &
    & 
    ((k^\da h)^\da \comp g^\da) \comp f \ar[d]_{(\mmu \comp g^\da) \comp f} \ar[dr]^{\iso} &
    \\
    ( k^\da (h^\da \comp g) )^\da f  \ar[d]_{\mmu \comp f} &
    &
    & 
    ((k^\da \comp h^\da) \comp g^\da) \comp f \ar[dl]_{\iso}  \ar[dr]^{\iso} &
    (k^\da \comp h)^\da (g^\da \comp f) \ar[d]^{\mmu \comp (g^\da \comp f)} \\
    ( k^\da ( h^\da \comp g)^\da)  \comp f \ar[d]_{\iso} \ar[rr]^{(k^\da \comp \mmu) 
\comp f} &
    &
    (k^\da (h^\da \comp g^\da)) \comp f  \ar[d]^{\iso} & 
    &
    (k^\da \comp h^\da) \comp ( g^\da \comp f) \ar[d] \\
    k^\da \comp ( ( h^\da \comp g)^\da \comp f) \ar[rr]_{k^\da \comp (\mmu \comp f)} &
    &
    k^\da ( ( h^\da \comp g^\da) f ) \ar[rr]_{\iso} & 
    &
  k^\da \comp (h^\da ( g^\da \comp f) ) \, ,}
\]
where, starting from the rhombus on the right-hand side and proceeding clockwise, we use naturality 
of the associativity in $\bD$, coherence of associativity in $\bD$, naturality 
of the associativity in $\bD$ again, and finally the associativity coherence axiom for
a relative pseudomonad in~\eqref{equ:assoc-relativepseudomonad}. 
The coherence axiom for the units is obtained via the following diagram:
\[
\xymatrix@R+3ex{
g^\da \comp f \ar[rr]^(.4){{\meta_g}^\da \comp f} \ar@/_/[drrrr]^(.6){\iso}
\ar@/_2pc/[ddrrrrr]_{1} & 
                                                                   &  
(g^\da  \comp \et_Y)^\da \comp f \ar[rr]^{\mmu_{g, Y} \comp f}     & 
                                                                   & 
(g^\da \comp {\et_Y}^\da) \comp f \ar[r]^{\iso} \ar[d]^{(g^\da \comp \mbeta_Y) \comp f} &
g^\da \comp ( {\et_Y}^\da f ) \ar[d]^{g^\da \comp (\mbeta_Y \comp f)} \\
  & 
  &
  & 
  &
(g^\da \comp 1_{\mt Y}) f \ar[r]^{\iso} &
g^\da ( 1_{\mt Y} \comp f) \ar[d]^{\iso}  \\
 & & & & &
g^\da \comp f \, ,}
\]  
where, starting from the triangle on the top left-hand side, we use the coherence axiom for
units of the relative pseudomonad in~\eqref{equ:unit-relativepseudomonad}, naturality of
the associativity in $\bD$, and the coherence axiom for units of $\bD$. 
\end{proof}

Note that, as mentioned in~ \cref{sec:background} for ordinary pseudomonads, $\kl(\mt)$ is only a bicategory even if~$\bC$ and~$\bD$ are 2-categories.

\begin{example}
It is straightforward to identify the bicategory of profunctors of \cref{exa:prof} with the
Kleisli bicategory associated to the relative pseudomonad of presheaves $P$ of Example~\ref{exa:psh}.
First of all, both bicategories have small categories as objects. Secondly, for small categories $\catX$ and $\catY$ we have 
\[
\Prof[\catX, \catY] = [\catY^\op \times \catX, \Set] \, , \qquad
\kl(\pshh)[\catX,\catY] = \CAT[\catX, \pshY] \,. 
\]
Thus, we have a canonical isomorphism of hom-categories
\[
\tau \co \Prof[\catX, \catY]  \to \kl(\pshh)[\catX,\catY]  \, ,
\]
given by exponential adjoint transposition.
Furthermore, these isomorphisms are compatible with composition and identities. For composition,
it suffices to observe that, for profunctors~$F \co \catX \to \catY$ and~$G \co \catY \to \catZ$, there is
a canonical natural isomorphism 
\[
\tau (G \circ F) \iso (\tau G) \circ (\tau F) \, , 
\]
where the composition on the left-hand side is that of $\Prof$, as defined in~\eqref{equ:prof-comp}, while the
composition on the right is the one of $\kl(\pshh)$, which is given by the functorial composite of~$\tau(F)  \co
\catX \to \pshY$ and~$(\tau G)^\da \co \pshY \to \pshZ$, the latter being defined by the formula for
left Kan extensions in~\eqref{equ:lan}. For identities, simply note that, for 
a small category $\catX$, the adjoint
transpose of the identity profunctor on~$\catX$, as defined in~\eqref{equ:prof-id}, is exactly the Yoneda embedding,
which is the identity on~$\catX$ in~$\kl(\pshh)$. 
\end{example}

In  one-dimensional category theory, every monad determines two 
universal adjunctions relating the base category with the category of Eilenberg-Moore
algebras and the Kleisli category for the monad. For pseudomonads, the construction of the bicategory of pseudoalgebras is well-known and it has been considered for no-iteration pseudomonads in~\cite[Section~4]{MarmolejoF:noip}, but we do not
need its counterpart for relative pseudomonads here. We focus instead on the counterpart of the Kleisli adjunction, which
has not been considered for no-iteration pseudomonads. The first step is the following lemma. 

\begin{lemma} \label{thm:kleislihom}
Let  $\mt$ be a relative pseudomonad over $J \co \bC \to \bD$. 
Then the function sending $X \in \kl(\mt)$ to $\mt X \in \bD$ 
admits the structure of a pseudofunctor $G^T \co \kl(\mt ) \to \bD$.
\end{lemma}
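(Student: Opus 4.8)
The plan is to define the pseudofunctor $G^T \co \kl(\mt) \to \bD$ on objects by $X \mapsto \mt X$, and on hom-categories by sending a Kleisli morphism $f \co JX \to \mt Y$ (which is an object of $\kl(\mt)[X,Y] = \bD[JX,\mt Y]$) to $f^\da \co \mt X \to \mt Y$, using functoriality of $(-)^\da_{X,Y}$ for the action on $2$-cells. This is clearly functorial on each hom-category since $(-)^\da$ is a functor. It remains to supply the pseudofunctoriality data: for composable Kleisli morphisms $f \co JX \to \mt Y$ and $g \co JY \to \mt Z$, we need an invertible $2$-cell $G^T(g \kcomp f) \to G^T(g) \comp G^T(f)$, and for each $X$ an invertible $2$-cell $G^T(\et_X) \to 1_{\mt X}$, subject to the coherence axioms for a pseudofunctor.

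First I would unwind the definitions. By \cref{thm:kbicat}, $g \kcomp f = g^\da \comp f$ in $\bD$, so $G^T(g \kcomp f) = (g^\da \comp f)^\da$, while $G^T(g) \comp G^T(f) = g^\da \comp f^\da$; thus the natural comparison $2$-cell is exactly $\mmu_{g,f} \co (g^\da \comp f)^\da \to g^\da \comp f^\da$, which is invertible by hypothesis. Similarly $G^T(\et_X) = {\et_X}^\da$ and the unit comparison is $\mbeta_X \co {\et_X}^\da \to 1_{\mt X}$, again invertible. The naturality of the comparison $2$-cells in $f$ and $g$ follows from the naturality of $\mmu$ built into the definition of a relative pseudomonad. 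So all the data is already present; the content is verifying coherence.

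Then I would check the three pseudofunctor coherence axioms. The associativity coherence axiom, relating the two ways of comparing $G^T(h \kcomp g \kcomp f)$ with $G^T(h) \comp G^T(g) \comp G^T(f)$, is precisely the associativity axiom \eqref{equ:assoc-relativepseudomonad} for the relative pseudomonad, once one inserts the associativity isomorphisms of $\bD$ and tracks how the Kleisli composition in \cref{thm:kbicat} was defined in terms of $\mmu$ and those isomorphisms — this is the same bookkeeping carried out in the proof of \cref{thm:kbicat}, and the diagram there can be reused almost verbatim. The two unit coherence axioms, relating $\mmu_{\et_Y, f}$ and $\mmu_{g, \et_X}$ to the comparisons $\mbeta$ composed with unit isomorphisms of $\bD$, follow respectively from part~(ii) of \cref{thm:cohkle} together with \eqref{equ:unit-relativepseudomonad}, and from \eqref{equ:unit-relativepseudomonad} directly (these are exactly the two unit diagrams appearing in the proof of \cref{thm:kbicat}). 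In the $2$-categorical case one can write these out cleanly; in the general bicategorical case one additionally pastes in coherence of the associativity and unit isomorphisms of $\bD$, invoking the coherence theorem for bicategories to suppress the routine manipulations.

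The main obstacle is purely organizational rather than conceptual: one must be careful to match the direction and placement of the structural isomorphisms of $\bD$ with the conventions used when defining $\kl(\mt)$ in \cref{thm:kbicat}, so that the axioms \eqref{equ:assoc-relativepseudomonad}, \eqref{equ:unit-relativepseudomonad} and \cref{thm:cohkle} plug in on the nose. Since the relevant pasting diagrams have essentially already been drawn in the proof of \cref{thm:kbicat}, I expect the proof to reduce to the remark that the pseudofunctoriality constraints for $G^T$ are $\mmu$ and $\mbeta$, and that the pseudofunctor axioms are then the corresponding relative pseudomonad axioms; hence the proof can be kept brief, referring back to \cref{thm:kbicat} and \cref{thm:cohkle} for the diagram chases.
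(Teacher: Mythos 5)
Your proposal matches the paper's proof: the paper also defines $G^T$ on hom-categories as $(-)^\da$, takes the pseudofunctoriality constraints to be exactly $\mmu_{g,f}$ and $\mbeta_X$, and verifies the three coherence diagrams by appeal to the associativity axiom \eqref{equ:assoc-relativepseudomonad}, the unit axiom \eqref{equ:unit-relativepseudomonad}, and part~(ii) of \cref{thm:cohkle}, just as you indicate.
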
 

\begin{proof} For $X, Y \in \kl(\mt)$, we define the functor $G^T_{X, Y} \co \kl(\mt )[X,Y] \to  \bD[G^T X,G^T Y]$
to be 
\[
 (-)^\da \co \bD[J X,\mt  Y] \longrightarrow \bD[\mt  X,\mt  Y] \, .
\]
By inspection of the definitions, we can define the pseudofunctoriality $2$-cells of $G^\mt$
to be exactly some of the 2-cells  that are part of the data of a relative pseudomonad, namely
\[
\mmu_{g, f} \co G^T(g \circ f) \to G^T(g) \comp G^T(f) \, , \quad
\mbeta_X \co G^T(\et_X) \to 1_{G^T X} \, .
\]
In order to have a pseudofunctor, we need to verify that the following three coherence diagrams
commute:
\[
\xymatrix{
 						& G^T \big( (h \circ g) \circ f \big) 
						\ar[dl]_{G^T(\mmu_{f,g,h})} \ar[dr]^{\mmu_{h  \circ g, f}}	& \\
 G^T \big( h \circ (g \circ f) \big) 	\ar[d]_{\mmu_{h, g \circ f}} & 								  	& G^T( h \circ g) \comp G^T(f) \ar[d]^{\mmu_{h, g} \comp G^T(f)} \\
 G^T(h) \comp G^T(g \circ f) 		\ar[rr]_{G^T(h) \comp \mmu_{g,f}} & 									& G^T(h) \comp G^T(g) \comp G^T(f) \, ,
 }
 \]
\[
\xymatrix@C=1.4cm{
G^T(f) \ar[r]^-{G^T(\rho_f)} \ar@/_1pc/[drr]_-{1_{G^T(f)}}  & G^T(f \circ \et_X) \ar[r]^-{\mmu_{f, X}} & G^T(f) \comp G^T(1_X) \ar[d]^{G^T(f) \comp \mbeta_X} \\
 & &  G^T(f) \, ,}
\]
\bigskip
\[
\xymatrix@C=4cm{
G^T(\et_Y \kcomp f)   \ar[r]^{\mmu_{Y, f}}   \ar@/_1pc/[dr]_{G^T(\lambda_f)} & 
G^T(\et_Y) \comp G^T(f)  \ar[d]^{\mbeta_Y \comp G^T(f)}  \\
 &  G^T(f) \, . }
\]
The first and second diagrams follow at once from the coherence conditions in~\eqref{equ:assoc-relativepseudomonad} and~\eqref{equ:unit-relativepseudomonad} that are part of the 
definition of a 
relative pseudomonad. The third is part~(ii) of Lemma~\ref{thm:cohkle}.
\end{proof}

By analogy with the one-dimensional case, we expect that
the pseudofunctor $G^T \colon \kl(\mt) \to \bD$ has some form of left pseudoadjoint.
The next result makes this precise.

\begin{theorem} \label{thm:psekle} 
Let $\mt   \co \bC \rightarrow \bD$ be a relative  
pseudomonad over $J \co \bC \to \bD$. Then $G^T \co \kl(\mt ) \rightarrow \bD$  has 
a relative left pseudoadjoint over $J \co \bC \to \bD$, 
\[
\xymatrix{
                       & \kl(\mt) \ar[d]^{G^T} \\
\bC \ar[r]_J \ar[ur]^(.45){F^T} & \bD \, .}
\] 
\end{theorem}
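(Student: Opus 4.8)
The plan is to exhibit $F^T$ as the Kleisli analogue of the free functor and to verify the defining data of a relative left pseudoadjoint from \cref{def:frec}. On objects, set $F^T X \defeq X$ for $X \in \bC$, viewed as an object of $\kl(\mt)$. For the unit $1$-cells, we take $\et_X \co JX \to \mt X = G^T F^T X$, which are already part of the data of $\mt$. It remains to supply, for each $X \in \bC$ and each $A \in \kl(\mt)$, an adjoint equivalence
\[
\xymatrix{
 \bD[JX, G^T A] \ar@<1.2ex>[rr]^-{(-)^\sh} \ar@{}[rr]|{\bot} & & \ar@<1.2ex>[ll]^{G^T(-)\,\et_X} \kl(\mt)[X,A] \, .}
\]
Unwinding the definitions, $G^T A = \mt A$, so $\bD[JX, G^T A] = \bD[JX, \mt A]$, while $\kl(\mt)[X,A] = \bD[JX, \mt A]$ as well. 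Thus both functors are between the \emph{same} category, and the natural candidate for $(-)^\sh$ is the identity functor; the right adjoint $G^T(-) \comp \et_X$ sends $u \co JX \to \mt A$ to $u^\da \comp \et_X \co JX \to \mt A$.

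The heart of the proof is therefore to show that the endofunctor $(-)^\da \comp \et_X$ on $\bD[JX,\mt A]$ is naturally isomorphic to the identity, and that this isomorphism can be organized into an adjoint equivalence with the identity as the other leg. The required natural isomorphism is exactly the family $\meta_u \co u \to u^\da \comp \et_X$, which is natural in $u$ by the assumption on $\meta$ in \cref{thm:relative-pseudomonad} and is componentwise invertible. Taking $(-)^\sh = \id$, unit $\meta$, and an appropriate counit $\mepsilon$, the triangle identities of an adjoint equivalence reduce to a compatibility between $\meta$ and itself; concretely, one needs $\meta_{u^\da \comp \et_X}$ to agree with $\meta_u$ transported along $\meta_u$, which is precisely the content of part~(i) of \cref{thm:cohkle} applied with suitable arguments (together with part~(iii) for the unit objects). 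Naturality of all structure then gives pseudonaturality of the resulting comparison, and \cref{adjfunct} promotes $F^T$ to a genuine pseudofunctor whose action on $1$-cells is forced: for $f \co X \to Y$ in $\bC$ one gets $F^T(f) = (\et_Y \comp J(f))^\sh = \et_Y \comp J(f)$, i.e.\ the Kleisli arrow associated to $Jf$, exactly as in the one-dimensional case.

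The main obstacle I expect is purely bookkeeping: verifying the triangle identities for the adjoint equivalence and the pseudonaturality coherence for $\et \co J \Rightarrow G^T F^T$ requires carefully matching several instances of $\meta$, $\mmu$, and $\mbeta$ against the coherence diagrams~\eqref{equ:assoc-relativepseudomonad}, \eqref{equ:unit-relativepseudomonad}, and the three diagrams of \cref{thm:cohkle}. No genuinely new identity is needed — everything follows from the axioms of a relative pseudomonad and \cref{thm:cohkle} — but the diagrams are large, so I would present the argument by reducing each coherence condition to a citation of the appropriate earlier diagram rather than drawing every pasting explicitly, much as in the proof of \cref{thm:adjkls}. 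In fact, once the adjoint equivalence and the pseudofunctor structure on $F^T$ are in place, one can observe as a sanity check that applying \cref{thm:adjkls} to this relative pseudoadjunction recovers the original relative pseudomonad $\mt$, since $G^T F^T X = \mt X$, $G^T(f^\sh) = f^\da$, and the induced $\mmu$, $\meta$, $\mbeta$ coincide with the given ones by construction.
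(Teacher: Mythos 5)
Your proposal is correct and takes essentially the same route as the paper: define $F^T X \defeq X$, take the units to be $\et_X \co JX \to \mt X = G^T F^T X$, and observe that $\kl(\mt)[X,A] = \bD[JX,\mt A] = \bD[JX, G^T A]$ with $G^T(-)\comp\et_X$ naturally isomorphic to the identity via the invertible 2-cells $\meta$, hence an adjoint equivalence. (One small remark: the appeal to \cref{thm:cohkle} for the triangle identities is unnecessary — naturality of $\meta$ and functoriality of $(-)^\da$ already yield them, since a functor naturally isomorphic to the identity is automatically one leg of an adjoint equivalence, which is all the paper uses.)
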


\begin{proof} For $X \in \bC$, 
we  define $F^T X \defeq X$.
Then we have $G^T F^T X = \mt  X$, so the relative pseudomonad 
provides a morphism $\et_X \co J X \rightarrow G^T F^T X$.
For these to act
like the components of the unit of a relative pseudoadjunction
one needs to show that the  functor 
\begin{equation*}
\xymatrix{
  \kl(\mt )[F^T X, Y] 
 \ar[rr]^{G^T(-)  \comp \et_X} & & \bD [JX, G^T Y] }
\end{equation*}
is an adjoint equivalence. Indeed,  
$\kl(\mt )[F^T X, Y] = \bD[JX, \mt  Y] = \bD[JX,G^T Y]$ and the functor~$G^T(-) \comp \et_X$ is naturally isomorphic  to the identity.
This is because, for $f \co JX \to \mt Y$, we have~$G^T(f)  \kcomp \et_X = f^\da \comp \et_X$ and there is an invertible 2-cell~$\eta_f \co f \to f^\da \comp \et_X$, suitably natural.
\end{proof}

\begin{remark} \label{thm:establish-coherence} 
 \cref{thm:psekle} allows us to give a more conceptual
account of the construction of the pseudomonad
associated to a relative pseudomonad over the identity $1_\bC \co \bC \to \bC$ in \cref{thm:rel-to-pseudo}. 
Given a relative pseudomonad $\mt$ over  $1_\bC \co \bC \to \bC$, we can
construct a pseudoadjunction between~$\bC$ and~$\kl(\mt )$
as in \cref{thm:psekle}. Then, the pseudomonad associated to this pseudoadjunction
 is exactly the pseudomonad described in~\cref{thm:rel-to-pseudo}. 
 So we have established again the coherence conditions for a pseudomonad,
 in a clean (albeit indirect) way.

 Let us also note that if we start with a relative pseudomonad $\mt$ over $J  \co \bC \rightarrow \bD$, form the associated Kleisli
relative pseudoadjunction (as in~\cref{thm:psekle}), and take the induced relative pseudomonad (as in~\cref{thm:adjkls}),  we then retrieve the original relative pseudomonad. 
The only issues arise at the 2-cell level and we leave the verifications to the interested readers.
In the other direction, suppose that we start with a relative pseudoadjunction 
\begin{equation*}
\begin{gathered}
\xymatrix{
                       & \bE \ar[d]^{G} \\
\bC \ar[r]_-{J} \ar[ur]^{F} & \bD \, ,}
\end{gathered} 
\end{equation*}
form the induced relative pseudomonad $\mt = GF$ over $J \co \bC \to \bD$ (as in~\cref{thm:adjkls}), and then take the induced relative pseudoadjunction
\begin{equation*}
\begin{gathered}
\xymatrix{
                       & \kl(\mt ) \ar[d]^{G^T} \\
\bC \ar[r]_J \ar[ur]^{F^T} & \bD \, ,} 
\end{gathered}
\end{equation*}
as in~\cref{thm:psekle}.
We expect a
comparison and indeed we have a pseudofunctor  $C \co \kl(\mt ) \to \bE$ defined on objects
by letting~$C(X) \defeq FX$, for $X \in \bC$. On hom-categories,  for $X, Y \in \bC$, we define
\[
C_{X, Y} \co  \bD[JX,\mt  Y]  \to  \bE[FX,FY] 
\]
by letting $C(f) \defeq f^\sh$, where we used that $\mt Y  = GF(Y)$.
\end{remark}

One can compose adjunctions between categories and, similarly,  pseudoadjunctions between bicategories. It does 
not make sense to compose relative pseudoadjunctions, but 
 one can form the composite of 
a relative pseudoadjunction and a pseudoadjunction. 

\begin{proposition} \label{thm:compose-adjunctions}
Let 
\begin{equation*}
\begin{gathered}
\xymatrix{
                       & \bE \ar[d]^{G} \\
\bC \ar[r]_J \ar[ur]^{F} & \bD\, ,} 
\end{gathered} \qquad
\begin{gathered}
\xymatrix{
    \bE
 \ar@<1.2ex>[rr]^(.52){F'}
 \ar@{}[rr]|{\bot}    
 & & \ar@<1.2ex>[ll]^{G'} \bE' }
\end{gathered}
\end{equation*}
be a relative pseudoadjunction and a pseudoadjunction, respectively. Then 
there is a relative pseudoadjunction of the form
\[
\xymatrix{
                       & \bE' \ar[d]^{G G'} \\
\bC \ar[r]_J \ar[ur]^{F'  F} & \; \bD \, .}
\] 
\end{proposition}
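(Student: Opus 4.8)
The plan is to obtain the asserted relative pseudoadjunction by composing the given relative pseudoadjunction with the given pseudoadjunction one hom-category at a time, in exact analogy with the composition of ordinary adjunctions and of pseudoadjunctions. Throughout, write $G \co \bE \to \bD$ and $F$ for the given relative pseudoadjunction over $J$, with unit morphisms $\et_X \co JX \to GFX$ and adjoint equivalences $(-)^\sh \dashv \big(G(-) \comp \et_X\big) \co \bD[JX, GA] \simeq \bE[FX, A]$ for $X \in \bC$ and $A \in \bE$; and write $F' \co \bE \to \bE'$, $G' \co \bE' \to \bE$ for the given pseudoadjunction, with $F' \dashv G'$, unit $\eta' \co 1_\bE \to G'F'$, and adjoint equivalences $\bE'[F'A, B] \simeq \bE[A, G'B]$ for $A \in \bE$, $B \in \bE'$.

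First I would record the data required by \cref{def:frec}. On objects, send $X \in \bC$ to $F'FX \in \bE'$; note that $GG' \co \bE' \to \bD$ is a pseudofunctor, being a composite of pseudofunctors. For the unit, take $\bar\et_X \co JX \to GG'F'FX$ to be the composite $G(\eta'_{FX}) \comp \et_X$, that is, $\et_X$ followed by the image under $G$ of the $FX$-component of the unit $\eta'$.

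Next I would assemble the adjoint equivalences. Fix $X \in \bC$ and $B \in \bE'$. Instantiating the adjoint equivalence of the relative pseudoadjunction at the object $A \defeq G'B$ gives an adjoint equivalence $\bD[JX, G(G'B)] \simeq \bE[FX, G'B]$, and instantiating the adjoint equivalence of the pseudoadjunction at $A \defeq FX$ gives an adjoint equivalence $\bE[FX, G'B] \simeq \bE'[F'FX, B]$. Their composite is an adjoint equivalence $\bD[JX, GG'B] \simeq \bE'[F'FX, B]$, since a composite of adjunctions is an adjunction and a composite of equivalences with invertible unit and counit is of the same kind. It remains to identify the right-to-left leg of this composite with $GG'(-) \comp \bar\et_X$: carrying $v \co F'FX \to B$ through the two right adjoints produces $G\big(G'(v) \comp \eta'_{FX}\big) \comp \et_X$, and the pseudofunctoriality 2-cell of $G$ supplies an invertible 2-cell $G\big(G'(v) \comp \eta'_{FX}\big) \iso GG'(v) \comp G(\eta'_{FX})$, whence $G\big(G'(v) \comp \eta'_{FX}\big) \comp \et_X \iso GG'(v) \comp G(\eta'_{FX}) \comp \et_X = GG'(v) \comp \bar\et_X$, naturally in $v$. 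Transporting the adjoint equivalence along this natural isomorphism yields an adjoint equivalence $\bD[JX, GG'B] \simeq \bE'[F'FX, B]$ whose right-to-left functor is literally $GG'(-) \comp \bar\et_X$, which is exactly the shape prescribed by \cref{def:frec}. Collecting this over all $X \in \bC$ and $B \in \bE'$ exhibits the relative pseudoadjunction of $F'F$ over $J$ with associated pseudofunctor $GG'$.

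I do not expect a genuine obstacle here. The one place that calls for care is the last identification, where the pseudofunctoriality constraints of $G$ come in and one should check that the adjoint-equivalence data obtained by transport along the natural isomorphism is the expected one; this is a routine coherence check, entirely parallel to the usual composition of adjunctions and pseudoadjunctions (\cf~\cite{KellyG:eleoot,StreetR:fibb}). One may additionally invoke \cref{adjfunct} afterwards to upgrade the object-assignment $X \mapsto F'FX$ to a pseudofunctor $F'F \co \bC \to \bE'$ whose structure is compatible with $\bar\et$.
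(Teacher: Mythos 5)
Your construction is correct and is precisely the ``evident'' one that the paper omits: take the unit to be $G(\eta'_{FX})\comp \et_X$ and compose the hom-wise adjoint equivalences $\bD[JX, GG'B] \simeq \bE[FX, G'B] \simeq \bE'[F'FX, B]$, identifying the right leg with $GG'(-)\comp\bar\et_X$ via the pseudofunctoriality constraint of $G$. Since the paper states only that the construction is evident and omits all details, your write-up matches the intended argument.
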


\begin{proof} The construction is evident and thus omitted.
\end{proof}

We conclude this section by  extending some results on no-iteration
pseudomonads to relative pseudomonads. In the next
proposition, part~(i) generalizes~\cite[Proposition~3.1]{MarmolejoF:noip},
part~(ii) generalizes~\cite[Proposition~3.2]{MarmolejoF:noip},
while part~(iii) does not seem to have been made explicit yet
 for no-iteration pseudomonads. 

\begin{proposition}\label{prop:funct} \label{prop:unit-nat}

Let $\mt$ be a relative pseudomonad over $J    \co \bC \to \bD$. 
\begin{enumerate}[(i)]
\item The function $\mt  \co \bC \to \bD$  admits the structure of a pseudofunctor.
\item The family of morphisms $\et_X \co JX \to \mt  X$, for $X \in \bC$, admits the structure
of a pseudonatural transformation $\et \co J \to \mt $.
\item The family of
functions $(-)^\da \co \bD[JX,\mt  Y] \to \bD[\mt  X,\mt  Y]$
for $X, Y \in \bC$, admits the structure of a pseudonatural transformation.
\end{enumerate}
\end{proposition}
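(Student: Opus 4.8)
The plan is to leverage \cref{thm:psekle} and \cref{thm:adjkls} to obtain parts (i) and (ii) essentially for free, and then to treat (iii) separately by a direct (but routine) construction. For (i) and (ii): given the relative pseudomonad $\mt$ over $J \co \bC \to \bD$, \cref{thm:psekle} supplies a relative pseudoadjunction with $F^T \co \bC \to \kl(\mt)$ and $G^T \co \kl(\mt) \to \bD$, together with unit components $\et_X \co JX \to G^T F^T X = \mt X$. By \cref{adjfunct}, this relative pseudoadjunction equips $F^T$ with the structure of a pseudofunctor $\bC \to \kl(\mt)$ and makes $\et \co J \to G^T F^T$ into a pseudonatural transformation. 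Composing with the pseudofunctor $G^T \co \kl(\mt) \to \bD$ of \cref{thm:kleislihom}, we get that $\mt = G^T F^T \co \bC \to \bD$ is a pseudofunctor, proving (i), and that $\et \co J \to G^T F^T = \mt$ is pseudonatural, proving (ii). Explicitly, one records that the pseudofunctoriality 2-cells of $\mt$ are $\phi_{g,f} \co \mt(g \comp f) \to \mt(g) \comp \mt(f)$ built from $\mmu$ and $\meta$, that $\mt$ on a 1-cell $f \co X \to Y$ is $(\et_Y \comp J f)^\da$, and that the pseudonaturality 2-cells $\psi_f \co \et_Y \comp Jf \to \mt(f) \comp \et_X$ are instances of the right unit $\meta$.

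For part (iii), the claim is that the family $(-)^\da_{X,Y} \co \bD[JX,\mt Y] \to \bD[\mt X, \mt Y]$ is pseudonatural in $X$ and $Y$ with respect to the pseudofunctor structure on $\mt$ established in (i). Precisely, viewing both sides as pseudofunctors $\bC^\op \times \bC \to \Cat$ (the source sending $(X,Y)$ to $\bD[JX, \mt Y]$, the target to $\bD[\mt X, \mt Y]$, with functorial actions by pre- and post-composition and by application of $\mt$ on morphisms), one must produce invertible 2-cells comparing, for $f \co JX \to \mt Y$ and morphisms $p \co X' \to X$, $q \co Y \to Y'$ in $\bC$, the two ways of transporting $f$ around the naturality square. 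The key 2-cell on the codomain side involves $\mt(q)^\da \comp f^\da \comp \mt(p)^\da$ versus $(\mt(q)^\da \comp f \comp \mt(p))^\da$ composed with the pseudofunctoriality of $\mt$; these are mediated by the associativity 2-cell $\mmu$ (for the composites of daggers) together with the right-unit 2-cell $\meta$ and the pseudofunctor-structure 2-cells $\phi$ from part (i). I would define the comparison 2-cell as the evident composite of instances of $\mmu$, $\meta$, $\mbeta$, and bicategorical structure isomorphisms, and then verify the pseudonaturality coherence axioms using the associativity and unit axioms of $\mt$ in \eqref{equ:assoc-relativepseudomonad} and \eqref{equ:unit-relativepseudomonad}, augmented by \cref{thm:cohkle}.

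The main obstacle is the coherence bookkeeping in part (iii): unlike (i) and (ii), it cannot simply be imported from the relative-pseudoadjunction machinery, since $(-)^\da$ is not the action of a pseudofunctor but a profunctor-like family, so one genuinely has to assemble the comparison 2-cells by hand and check that the large hexagon/triangle coherence diagrams commute. I expect these to reduce, after pasting, to repeated application of the naturality of $\mmu$ and $\meta$ together with the two defining axioms of a relative pseudomonad and \cref{thm:cohkle}; no new ideas beyond diagram chasing should be required, and in keeping with the paper's conventions the routine verifications can be left to the reader. As a sanity check, one notes that in the case $J = 1_\bC$ and $\mt$ an ordinary pseudomonad (via \cref{thm:pseudo-to-rel}), $f^\da = \mut_Y \comp \mt(f)$, and the asserted pseudonaturality of $(-)^\da$ recovers the familiar pseudonaturality of the Kleisli-extension operation, which is well known.
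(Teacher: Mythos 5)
Your proposal is correct and takes essentially the same route as the paper: the paper itself derives (i) and (ii) from \cref{adjfunct} and \cref{thm:psekle} (via \cref{thm:establish-coherence}) before recording the explicit data $\mt(f)=(\et_Y \comp J(f))^\da$, and for (iii) it likewise just exhibits the comparison 2-cells as composites of $\meta$ and $\mmu$ and declares the remaining coherence checks straightforward. One minor notational slip in your (iii): the transported morphism is $\mt(q) \comp f \comp J(p)$ (one precomposes with $J(p)$, and $\mt(q)=(\et_{Y'} \comp J(q))^\da$ is already a dagger, so writing $\mt(q)^\da$ or $f \comp \mt(p)$ does not typecheck), the comparison cell being $(\mt(q)\comp f\comp J(p))^\da \to \mt(q)\comp f^\da \comp \mt(p)$; this does not affect the argument.
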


\begin{proof} Parts (i) and (ii) follow from~\cref{adjfunct} and~\cref{thm:psekle} 
via~\cref{thm:establish-coherence}, but we also give explicit proofs. 
We begin from part~(i). For~$f \co X \to Y$, we define
$\mt (f) \co \mt  X \to \mt  Y$  by letting  $\mt  f \defeq (\et_Y J(f))^\da$. 
We then define the pseudofunctoriality 2-cells. First, we 
need invertible 2-cells $\tau_{g, f} \co \mt (g \comp f) \to \mt (g) \comp \mt (f)$, for 
$f \co X \to Y$ and $g \co Y \to Z$. By definition, we have 
\[
\mt  (g \comp f) =  (\et_Z \comp J(g) \comp J(f))^\da \, , \quad
 \mt  (g) \comp \mt  (f) =  (\et_Z \comp J(g))^\da \comp (\et_Y \comp J(f))^\da \, . 
 \]
We  then define $\tau_{g, f}$ as the composite 2-cell 
\[
\xymatrix@C+7ex{
(\et_Z \comp J(g) \comp J(f))^\da  \ar[r]^-{ \eta}   & 
((\et_Z \comp J(g))^\da \comp \et_Y \comp J(f))^\da \ar[r]^{\mu} & 
  (\et_Z \comp J(g))^\da \comp (\et_Y \comp J(f))^\da }
   \]
Secondly, we need invertible 2-cells $\tau_X \co \mt  (1_X) \to 1_{\mt  X}$ for $X \in \bC$. 
But since $\mt  (1_X) = {\et_X}^\da$ by definition, we  let $\tau_X \defeq \mbeta_X$,
the component of the left unit of the relative pseudomonad.  

One should check the three coherence laws for a pseudofunctor. The coherence law for~$\tau_{g, f}$ involves a pasting of
the associativity condition in~\eqref{equ:assoc-relativepseudomonad}, part~(i) of Lemma~\ref{thm:cohkle}
and all the naturality conditions for the families 2-cells of a relative pseudomonad. 
One of the coherence laws for $\tau_X$
comes from the unit condition in~\eqref{equ:unit-relativepseudomonad}, while the other is from part~(ii) 
of Lemma~\ref{thm:cohkle}. 

For part~(ii), the required pseudonaturality $2$-cell for $f\co X \to Y$ fits into the diagram
\[
\xymatrix@C+3em{
JX \ar[d]_{\et_X}  \ar[r]^{J(f)}  \ar@{}[dr]|{\Downarrow \; \bar{i}_f} & J  Y \ar[d]^{\et_Y} \\
\mt X \ar[r]_{\mt(f)} & \mt  Y}
\]
Since $\mt  (f)  = (\et_Y \comp J(f))^\da$, we can simply let
\begin{equation}
\label{equ:barif}
\bar{i}_f \defeq \meta_{\et_Y \comp J(f)} \, .
\end{equation}
 We should check two coherence conditions for pseudonatural transformations. The composition condition involves a pasting of a naturality of~$\meta$ to a diagram coming from part~(i) of Lemma~ \ref{thm:cohkle}.
The identity condition is just part~(iii) of \cref{thm:cohkle} .

Finally, for part~(iii), to see the pseudonaturality in $X$, take $u\co X' \to X$, observe that $f^\da \comp \mt  (u) =  f^\da \comp (\et_X \comp u)^\da$ and 
note the 2-cell
\[
\xymatrix{
(f \comp u)^\da  \ar[r]^-{\eta} & 
  (f^\da \comp \et_X \comp u)^\da  \ar[r]^{\mu} & 
   f^\da \comp (\et_X \comp u)^\da  
  \, . }
 \]
For the pseudonaturality in $Y$, take
$v\co Y \to Y'$, observe that $(\mt  (v) \comp f)^\da  = ( (\et_{Y'} \comp v)^\da \comp f)^\da$
and $\mt  (v) \comp f^\da = (\et_{Y'} \comp v)^\da \comp f^\da$, 
and note the 2-cell
\[
\xymatrix@C+3ex{
 \big( (\et_{Y'} \comp v)^\da \comp f \big)^\da \ar[r]^{\mu} & 
 (\et_{Y'} \comp v)^\da \comp f^\da  \, .}
 \]
There are coherence conditions to check, but they are straightforward.
\end{proof}

\section{Lax idempotent relative pseudomonads} 
\label{sec:lax-idempotency}

We isolate a special class of relative pseudomonads,
which appears to be the appropriate generalization
to our setting of the notion of a lax idempotent 
2-monad, or Kock-Z\"oberlein 2-monad, or KZ-doctrine~\cite{KockA:monwsa,ZoberleinV:dokat}.
An extensive analysis of these 2-monads, with useful
equivalent formulations, was given in the course of a study
of general property-like 2-monads in~\cite{KellyG:prolt}.
For pseudomonads, the more general notion of lax idempotent
pseudomonad on a bicategory was introduced in~\cite{StreetR:fibb}, with yet another
characterisation of the notion, and studied further in~\cite{MarmolejoF:docwsf,MarmolejoF:kaneli}. 

In order to state the definition of a lax idempotent relative pseudomonad, 
it is convenient to use the notion of a left extension in a bicategory~\cite[\S2.2.]{LackS:a2cc}, which we now recall.  We consider a fixed
morphism $i \co X \to X'$ in a bicategory $\bC$.
By definition, a \emph{left extension} of a morphism~$f \co X \to Y$ along $i$ consists of
a morphism $f' \co X' \to Y$ and a 2-cell $\eta \co f \to f' i$ such that composition
with~$\eta$ induces a bijection between 2-cells $f \to g i$ and 2-cells $f' \to g$ 
for every morphism $g \co X' \to Y$. In this case, one says that $\eta$ exhibits $f'$ as the
left extension of $f$ along $i$. 

Let us fix again a pseudofunctor between bicategories $J \co \bC \to \bD$.

\begin{definition} \label{def:laxidem} 
A \myemph{lax idempotent relative pseudomonad} over $J$ is a relative pseudomonad $\mt $ over $J \co \bC \to \bD$ such that the following conditions hold:
\begin{itemize} 
\item for all $f \co JX \to \mt Y$, the 2-cell $\eta_f \co f \to f^\da \comp \et_X$ 
exhibits $f^\da \co \mt X \to \mt Y$ as a left extension of~$f$ along $\et_X$, 
\item  for all $f \co JX \rightarrow \mt  Y$, $g \co JY \rightarrow \mt  Z$, the diagram
\[
\xymatrix@C=1.8cm{
g^\da f \ar[r]^-{\eta_{g^\da f}}  \ar@/_1pc/[dr]_{g^\da \eta_f} 
& (g^\da \comp f)^\da \et_X \ar[d]^-{\mmu_{f,g} \et_X} \\
 & g^\da \comp f^\da \et_X }
 \]
commutes,  
\item for all $X \in \bC$, the diagram
 \[
  \xymatrix@C=1.8cm{
 \et_X \ar[r]^-{\eta_{\et_X}} \ar@/_1pc/[dr]_-{1} & {\et_X}^\da \comp \et_X \ar[d]^-{\theta_X \comp \et_X} \\ 
 & 
   \et_{X}}
  \]
  commutes.
\end{itemize}
\end{definition}

Our next goal is to give alternative characterizations of lax idempotent 
relative pseudomonads, which we will use to discuss further the relative pseudomonad of presheaves.  For this, we formulate the relative version of the notion of a left Kan pseudomonad introduced in~\cite[Definition~3.1]{MarmolejoF:kaneli}.



 



\begin{definition} \label{thm:lax-local-adjunction}
A \emph{relative left Kan pseudomonad} over  $J \co \bC \rightarrow \bD$
consists of: 
\begin{itemize}
\item an object $\mt  X \in \bD$, for every $X \in \bC$, 
\item a morphism $\et_X \co JX \rightarrow \mt  X$ in $\bD$, for every $X \in \bC$,
\item a morphism $f^\da \co \mt X \to \mt Y$,  for every $f \co JX \to \mt Y$,
\item an
invertible 2-cell $\eta_f \co f \to f^\da \comp \et_X$ which exhibits $f^\da$ as the left extension of~$f$ along $\et_X$, for every $f \co JX \to \mt Y$, 
\end{itemize}
such that the following conditions hold:
\begin{itemize}
 \item the 2-cell $g^\da \comp \eta_f \co g^\da f \to g^\da f^\da \et_X$ exhibits $g^\da f^\da$ as the left extension of $g^\da f$ along $\et_X$, 
 for all $f \co J X \to \mt X$, $g \co JY \to \mt Z$, 
\item the identity 2-cell $1 \co \et_X \to  \et_X$
 exhibits $1_{\mt X}$ as a left extension of $\et_X$ along $\et_X$, for all $X \in \bC$.
 \end{itemize}
\end{definition}

 \medskip
  
Let us now assume we have an object $\mt  X \in \bD$ for every $X \in \bC$,
 a morphism $\et_X \co JX \rightarrow \mt  X$ in $\bD$ for every $X \in \bC$,
a morphism $f^\da \co \mt X \to \mt Y$ for every $f \co JX \to \mt Y$ and
  an invertible 2-cell $\eta_f \co f \to f^\da \comp \et_X$
exhibiting $f^\da$ as the left extension of $f$ along $\et_X$ for every $f \co JX \to \mt Y$.
Note that this gives us all the data for a relative left Kan pseudomonad, but does not
require all its axioms. Below, we refer to this data simply as $\mt \co \bC \to \bD$. 
It is evident that, for all $X, Y \in \bC$ we have an adjunction 
\begin{equation}
\label{equ:etaeps}
\xymatrix{
    \bD[JX, \mt  Y] 
 \ar@<1.2ex>[rr]^-{(-)^\da}
 \ar@{}[rr]|-{\bot}    
 & & \ar@<1.2ex>[ll]^{(-)  \, \et_X} \bD[\mt  X, \mt  Y] \, ,}
\end{equation}
whose unit has components the invertible 2-cells $\eta_f \co f \to f^\da \et_X$, for $f \co JX \to \mt Y$,
and whose counit has components 2-cells that will be written  $\varepsilon_u \co (u \comp \et_X)^\ast \to u$, for $u \co \mt  X \rightarrow \mt  Y$.
We can then state our characterizations of lax idempotent relative pseudomonads as follows. 

\begin{theorem}\label{prop:laxid} 
The following conditions are equivalent:
\begin{enumerate}[(i)] 
\item $\mt \co \bC \to \bD$ admits the structure of a lax idempotent relative pseudomonad over  $J \co \bC \rightarrow \bD$.
\item For every $f \co JX \to \mt Y$, the 2-cell $\varepsilon_{f^\da} \co (f^\da \et_X)^\da \to f^\da $ is invertible and there are isomorphisms
\[
\mu_{f,g} \co (g^\da f)^\da \to g^\da f^\da \, , \quad
\theta_X \co (\et_X)^\da \to 1_{\mt X} \, .
\]
\item The bicategory $\bE$ with objects of the form $\mt X$, for $X \in \bC$, and
hom-categories given by defining $\bE[\mt X, \mt Y]$ to be the full subcategory of
$\bD[\mt X, \mt Y]$ spanned by the morphisms $u \co \mt X \to \mt Y$ such that $u \iso f^\da$, for some
$f \co JX \to \mt Y$ in $\bD$, is a sub-bicategory of $\bD$. 
\item There exists a sub-bicategory $\bE$ of $\bD$ such that $f^\da \co 
\mt X \to \mt Y$ is in $\bE$
for all $f \co JX \to \mt Y$ in $\bD$ and $\varepsilon_u \co (u \et_X)^\da \to u$ is invertible for all $u
\co \mt X \to \mt Y$ in $\bE$.
\item $\mt  \co \bC \to \bD$ is a relative left Kan pseudomonad over  $J \co \bC \rightarrow \bD$.
\end{enumerate}
\end{theorem}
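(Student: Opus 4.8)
The plan is to prove the equivalences in a cycle that keeps the passage through each step as light as possible, roughly
\[
\text{(v)} \Rightarrow \text{(i)} \Rightarrow \text{(ii)} \Rightarrow \text{(iv)} \Rightarrow \text{(iii)} \Rightarrow \text{(v)},
\]
inserting direct shortcuts where a universal property makes one obvious. The conceptual heart is the adjunction~\eqref{equ:etaeps}, whose unit is the family $\eta_f$ (invertible by hypothesis, since each $\eta_f$ exhibits a left extension and $\mt$ is defined so that this 2-cell is invertible) and whose counit $\varepsilon_u \co (u\comp\et_X)^\da \to u$ is the comparison morphism measuring how far $u$ is from lying in the image of $(-)^\da$. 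Almost every clause in the theorem is a restatement of "$\varepsilon$ is invertible on the relevant class of morphisms", so the proof is really an exercise in tracking that one 2-cell.

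First I would do (v)$\Rightarrow$(i). Given a relative left Kan pseudomonad, the second and third bullets of~\cref{thm:lax-local-adjunction} say precisely that $g^\da\comp\eta_f$ exhibits $g^\da f^\da$ as a left extension and that $1$ exhibits $1_{\mt X}$ as one. Since $(g^\da f)^\da$ is by definition the left extension of $g^\da f$ along $\et_X$, the uniqueness of left extensions produces a unique invertible $\mmu_{f,g}\co (g^\da f)^\da \to g^\da f^\da$ compatible with the exhibiting 2-cells, and similarly a unique invertible $\mbeta_X \co (\et_X)^\da \to 1_{\mt X}$; naturality of $\mmu$ and $\meta$ follows from the universal property, and the commuting triangles in~\cref{def:laxidem} are exactly the compatibility conditions just imposed. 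The associativity and unit axioms~\eqref{equ:assoc-relativepseudomonad},~\eqref{equ:unit-relativepseudomonad} then hold because both sides of each diagram, after composing with $\et_X$, agree by a diagram chase using the exhibiting 2-cells, and left extensions are jointly monic against such composites; this is the standard Kelly-style redundancy argument already invoked in the proof of~\cref{thm:cohkle}, and is the one genuinely calculational step, though it is routine. Conversely (i)$\Rightarrow$(v): a lax idempotent relative pseudomonad has $\eta_f$ exhibiting $f^\da$ as a left extension by definition, and one must check the two extra Kan-pseudomonad axioms; the first follows from the middle bullet of~\cref{def:laxidem} together with the fact that $\mmu_{f,g}$ is invertible (so $g^\da\eta_f$ and $\eta_{g^\da f}$ determine each other up to iso, hence $g^\da\eta_f$ also exhibits a left extension), and the second similarly from the last bullet and invertibility of $\mbeta_X$.

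For (i)$\Leftrightarrow$(ii): the existence of the isomorphisms $\mmu_{f,g}$ and $\mbeta_X$ is part of the data either way, so the content is the invertibility of $\varepsilon_{f^\da}$. One direction: if $\eta_f$ exhibits $f^\da$ as a left extension, then $\varepsilon_{f^\da}$ is the comparison to the left extension $(f^\da\et_X)^\da$ of the same morphism $f^\da\et_X \cong f$, hence invertible by uniqueness; conversely if $\varepsilon_{f^\da}$ is invertible then, since $(-)^\da$ has a right adjoint with invertible unit, the standard triangle identities force $\eta_f$ to be the unit of an adjoint equivalence when restricted appropriately, giving the left-extension property. Then (ii)$\Rightarrow$(iv) is immediate taking $\bE$ as in (iii), and (iv)$\Rightarrow$(iii) amounts to checking that the explicitly described full subcategories in (iii) are closed under composition and contain identities, which follows because $f^\da$ composed with $g^\da$ is, via $\mmu$, isomorphic to $(g^\da f)^\da$ — one uses $\varepsilon_u$ invertible on $\bE$ together with naturality to identify the hom-categories in (iii) and (iv). Finally (iii)$\Rightarrow$(v): in the sub-bicategory $\bE$ each $\eta_f$ becomes the unit of an equivalence $\bD[JX,\mt Y]\simeq \bE[\mt X,\mt Y]$, and the left-extension property along $\et_X$, as well as the two closure axioms for a relative left Kan pseudomonad, are read off from this equivalence.

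The main obstacle I anticipate is not any single implication but the bookkeeping in (i)$\Rightarrow$(v) and its converse: one must verify that "$g^\da\comp\eta_f$ exhibits a left extension" is genuinely equivalent, given invertibility of $\mmu$, to the commuting triangle in~\cref{def:laxidem}, and this requires carefully matching the universal property of left extensions against the naturality squares for $\meta$ and $\mmu$ — essentially re-deriving part of~\cref{thm:cohkle} in the lax-idempotent setting. Everything else is formal manipulation of a single adjunction with invertible unit, where the phrase "lax idempotent" is exactly the assertion that the counit $\varepsilon$ is invertible on the image of $(-)^\da$.
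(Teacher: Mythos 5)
Your proposal is correct and close in substance to the paper's proof: both arguments revolve around the adjunction~\eqref{equ:etaeps} and reduce everything to the invertibility of the counit $\mepsilon$ on the essential image of $(-)^\da$. The differences are in the routing and in how the hard step is executed. The paper runs (i)$\Rightarrow$(ii)$\Rightarrow$(iii)$\Rightarrow$(iv)$\Rightarrow$(v)$\Rightarrow$(i), whereas you run (v)$\Rightarrow$(i)$\Rightarrow$(ii)$\Rightarrow$(iv)$\Rightarrow$(iii)$\Rightarrow$(v); swapping (iii) and (iv) makes your middle links slightly heavier, since under hypothesis (iv) the isomorphism $(g^\da \comp f)^\da \iso g^\da \comp f^\da$ is not given data, so your appeal to ``$\mmu$'' there should be replaced by the composite $\mepsilon_{g^\da \comp f^\da} \circ (g^\da \comp \meta_f)^\da$, invertible because $g^\da \comp f^\da$ lies in the sub-bicategory; this is exactly the comparison the paper exhibits in its (iv)$\Rightarrow$(v) diagram. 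For the genuinely laborious implication (v)$\Rightarrow$(i), you define $\mmu$ and $\mbeta$ by the universal property and check the coherence axioms by precomposing with $\et_X$; the paper mentions this route but instead expresses $\mmu$ and $\mbeta$ via the unit and counit and verifies associativity and unit coherence by two large pasting diagrams, so your plan is legitimate, but the chase you call routine is where the paper spends most of its effort. One caution on ``(iii)$\Rightarrow$(v) is read off from the equivalence'': the left-extension property in \cref{thm:lax-local-adjunction} quantifies over all 1-cells of $\bD$, not only those in $\bE$, so the hom-category equivalences $\bD[JX,\mt Y]\simeq\bE[\mt X,\mt Y]$ alone do not yield it; you must transport the globally universal 2-cell $\meta_{g^\da \comp f}$ along the invertible comparison above (and $\meta_{\et_X}$ along $\mepsilon_{1_{\mt X}}$), which is precisely what the paper's (iv)$\Rightarrow$(v) diagrams accomplish. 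Finally, your direct argument that $\mepsilon_{f^\da}$ is invertible (uniqueness of left extensions, equivalently a triangle identity plus invertibility of $\meta_f$) is fine and in fact simpler than the paper's, which derives it from the lax-idempotency triangles.
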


\begin{proof} To prove that (i) implies (ii), observe that 
the axioms for a lax idempotent relative pseudomonad
imply that the following diagram commutes:
\[
\xymatrix@C=1.8cm{
f^\da \et_X \ar[r]^-{\eta_{f^\da \et_X}}  \ar@/_1pc/[dr]_{f^\da \eta_{\et_X}}  \ar@/_3pc/[ddr]_-{1}
& (f^\da \comp \et_X)^\da \et_X \ar[d]^-{\mmu_{\et_X,f} \et_X} \\
 & f^\da \comp \et_X^\da \et_X  \ar[d]^-{f^\da \theta_X \comp \et_X} \\ 
 &  f^\da  \comp \et_{X}}
 \]
By one of the triangular laws for the adjunction in~\eqref{equ:etaeps},  $\varepsilon_{f^\da}$ is the composite
\[
\xymatrix@C=1.8cm{
(f^\da \comp \et_X)^\da \ar[r]^{\mu_{f, \et_X}} &
f^\da \comp (\et_X)^\da \ar[r]^{f^\da \theta_X} &
f^\da }
\]
and it is therefore invertible.  For (ii) implies (iii), the given isomorphisms show that $\bE$ as defined is closed under composition and contains identities. The implication from (iii) to (iv) is immediate.
For the implication from (iv) to (v), observe that the following diagram commutes:
\[
\xymatrix@C=1.8cm{
g^\da f \ar[r]^{\eta_{g^\da f}} \ar[d]_{g^\da \eta_f} & 
  (g^\da \comp f)^\da \et_X  \ar[d]^{(g^\da \comp \meta_f)^\da \et_X}  \\
   g^\da f^\da i_X \ar[r]^{\eta_{g^\da f^\da} \et_X} \ar@/_1pc/[dr]_{1}	& 
 (g^\da \comp f^\da \comp \et_X)^\da \et_X \ar[d]^{\mepsilon_{g^\da \comp f^\da} \et_X} \\
 & g^\da \comp f^\da \et_X}
 \]
 This shows that $g^\da \eta_f$ is the composite of $\eta_{g^\da f}$ (which exhibits $(g^\da f)^\da$ as an extension of $g^\da f$ along~$\et_X$) with an invertible 2-cell. It then follows
 that $g^\da \comp \eta_f$ exhibits
 $g^\da f^\da$ as the left extension of $g^\da f$ along $\et_X$, as required. Similarly,
 we have 
 \[
 \xymatrix@C=1.8cm{
 \et_X  \ar@/_1pc/[dr]_{1} \ar[r]^{\eta_{\et_X}}  &   {\et_X}^\da \comp \et_X \ar[d]^{\varepsilon_{1_{\mt X}}} \\
  & \et_X}
   \]
  which implies that the identity $1 \co \et_X \to  \et_X$ exhibits $1_{\mt X}$ as a left extension of $\et_X$ along $\et_X$.  
 
Finally, we show that (v) implies (i). We begin by defining the remaining parts of the data
for a relative pseudomonad, namely the families of invertible 2-cells $\mu_{f,g} \co (g^\da \comp f)^\da \to g^\da \comp f^\da$ and $\theta_X : {\et_X}^\da \to 1_{\mt X}$. Using the universal
property of $\eta_{g^\da f}$, we define $\mu_{f,g}$ to be the unique 2-cell such that
  \[
 \xymatrix@C=1.8cm{
 g^\da \comp f
 	\ar[r]^-{\eta_{g^\da \comp f}}
  	\ar@/_1pc/[dr]_-{g^\da \comp \eta_f}  & 
(g^\da \comp f)^\da \comp \et_X \ar[d]^-{\mu_{f,g} \comp \et_X} \\
    & 
 (g^\da \comp f)^\da \comp \et_X }
\]
Similarly, using the universal property of $\eta_{\et_X}$, we define $\theta_X$ to be the unique 2-cell such that 
  \[
  \xymatrix@C=1.8cm{
 \et_X \ar[r]^-{\eta_X} \ar@/_1pc/[dr]_-{1} & {\et_X}^\da \comp \et_X \ar[d]^-{\theta_X \comp \et_X} \\ 
 & 
  1_{\mt X} \comp \et_{X} \, .}
  \]
It then remains only to check the coherence conditions of~\cref{thm:relative-pseudomonad}.
There are two ways of doing this. The first is by a diagram-chasing arguments using the universal properties defining~$\mu_{f,g}$ and~$\theta_X$.
The second is to express~$\mu_{f,g}$ and $\theta_X$ in terms of the unit and counit of the adjunction. Taking that approach, first observe that the diagram
\begin{equation}
\label{equ:sublemma} 
\begin{gathered}
\xymatrix{
( h^\da \comp u \comp \et_X)^\da 
\ar^{(h^\da \comp \meta_{u \comp \et_X})^\da}[rr] \ar@/_/[drr] & &
( h^\da \comp (u \comp \et_X)^\da \comp \et_X)^\da \ar[rr]^(.55){\mepsilon_{h^\da \comp 
(u \comp \et_X)^\da}} \ar[d]^{(h^\da \comp \mepsilon_u \comp \et_X)^\da} & &
h^\da \comp (u \comp \et_X)^\da \ar[d]^{h^\da \comp \mepsilon_u} \\
 & & (h^\da \comp u \comp \et_X)^\da \ar[rr]_(.55){\mepsilon_{h^\da \comp u}} 
&  & h^\da \comp u }
\end{gathered}
\end{equation}
commutes by a triangle identity and the naturality of $\mepsilon$.  
The associativity coherence condition is then given by the diagram
\[
 \resizebox{10cm}{8cm}{
\begin{xy}
(50,110)*+{((h^\da \comp g)^\da f)^\da }="1";
(100,90)*+{((h^\da \comp g)^\da \comp f^\da \comp \et)^\da}="2";
(100,60)*+{(h^\da \comp g)^\da \comp f^\da}="3";
(100,30)*+{(h^\da \comp g^\da \comp \et)^\da \comp f^\da}="4";
(100,0)*+{h^\da \comp g^\da \comp f^\da}="5";
(50,-15)*+{h^\da \comp ( g^\da \comp f^\da \comp \et)^\da}="6";
(0,0)*+{h^\da (g^\da \comp f)^\da}="7";
(0,30)*+{(h^\da \comp (g^\da \comp f)^\da \et)^\da  }="8";
(0,60)*+{(h^\da \comp g^\da \comp f)^\da}="9";
(0,90)*+{((h^\da \comp g^\da \comp \et)^\da \comp f)^\da}="10";
(50,75)*+{((h^\da g^\da \et)^\da \comp f^\da \comp \et)^\da}="11";
(50,45)*+{(h^\da \comp g^\da \comp f^\da \comp \et)^\da}="12";
(50,15)*+{(h^\da (g^\da \comp f^\da \comp \et)^\da \et)^\da}="13" ;
{\ar^*+{\fra } "1";"2"};
{\ar^*+{\frb } "2";"3"};
{\ar^*+{\frc } "3";"4"};
{\ar^*+{\frd } "4";"5"};
{\ar_*+{\fre } "6";"5"};
{\ar_*+{\frf} "7";"6"};
{\ar_*+{\frg } "8";"7"};
{\ar_*+{\frh } "9";"8"};
{\ar_*+{\fri} "10";"9"};
{\ar_*+{\frj} "1";"10"};
{\ar_*+{\fro} "11";"12"};
{\ar_*+{\frq} "12";"13"};
{\ar^*+{\frmm} "10";"11"};
{\ar_*+{\fru} "2";"11"};
{\ar^(.4)*+{\frn} "11";"4"};
{\ar^(.4)*+{\frp} "12";"5"};
{\ar_*+{\frr} "13";"6"};
{\ar_*+{\frs} "8";"13"};
{\ar_*+{\frt} "9";"12"};
\end{xy}
 }
\]
where, starting from the top in a clockwise direction, we use 
interchange, two naturalities of~$\mepsilon$, the diagram
in~\eqref{equ:sublemma}, a naturality of $\mepsilon$, a naturality of $\meta$,
and finally an interchange again. 
Finally, the unit condition is given by the following diagram:
\[
\xymatrix{ 
f^\da \ar[rr]^{{\meta_f}^\da}  \ar[d]^{{\meta_f}^\da} \ar@/_3pc/_{1}[dd] 
& & (f^\da \comp \et_X)^\da \ar^{(f^\da \comp \meta_X)^\da}[d] \ar_{1}[lld] \\
(f^\da \comp \et_X)^\da  \ar^{\mepsilon_{f^\da}}[d] &  
& (f^\da \comp (\et_X)^\da \comp \et_X)^\da \ar^{(f^\da \comp 
\mepsilon \comp \et_X)^\da}    [ll] \ar^{
\mepsilon_{f^\da \comp {\et_X}^\da}}[d] \\
f^\da & & f^\da \comp {\et_X}^\da \ar^{f^\da \comp 
\mepsilon_{1_{\mt  X}}}[ll] }
\]
where we have two uses of the triangle identities and a naturality
\end{proof}

\begin{example} \label{thm:exa-lax-idemp-psh}
We can apply~\cref{prop:laxid} to 
show that the relative pseudomonad  of presheaves of~\cref{exa:psh}
is lax idempotent. For this, observe that for $\catX, \catY \in \Cat$, 
there is an adjunction of the form
\begin{equation}
\label{equ:cat-adj-coc}
\xymatrix{
    \CAT[\catX, \pshY] 
 \ar@<1.2ex>[rr]^(.52){(-)^\da}
 \ar@{}[rr]|{\bot}    
 & & \ar@<1.2ex>[ll]^{(-)  \, \yo_X} \CAT[\pshX, \pshY] }
\end{equation}
in which the components of the unit are  natural isomorphisms. 
The left adjoints in~\eqref{equ:cat-adj-coc} 
factor through~$\COC$, the sub-2-category of cocomplete locally small
categories and cocontinuous functors, and if $U \co \pshX \to \pshY$ is cocontinuous, 
then $\mepsilon_U$ is an isomorphism. Part (iii) of~\cref{prop:laxid} then applies.
A similar example arises by considering the Ind-completion,
which is also  lax idempotent. This is because the corresponding left adjoint 
factors through $\FIL$ the sub-2-category of Ind-complete
categories and functors preserving filtered colimits; and 
if $U \co \indX \to \indY$ preserves filtered colimits
then $\mepsilon_U$ is an isomorphism.
\end{example}

\begin{remark} \cref{prop:laxid} and~\cite[Theorems~4.1 and~4.2]{MarmolejoF:kaneli} imply that a pseudomonad~$\mt$ on a bicategory $\bC$ is 
 lax idempotent in the usual sense if and 
only if it is lax idempotent as a relative pseudomonad over the identity $1_\bC \co
\bC \to \bC$ in the sense of~\cref{def:laxidem}.
\end{remark}

\section{Liftings, extensions, and compositions}
\label{sec:liftings-extensions-compositions}

We now discuss a general method to extend a 2-monad to the Kleisli bicategory of a relative pseudomonad, which we will apply
in~\cref{sec:applications} to extend several 2-monads from the 2-category~$\Cat$ of small categories 
and functors to the bicategory~$\Prof$ of small categories and profunctors. 

Let us begin by introducing the setting in which we will be working. We fix a
pseudofunctor between 2-categories $J \co \bC \to \bD$, a relative pseudomonad $\mt$ over $J$ with data as in~\cref{thm:relative-pseudomonad}
and a 2-monad $S \co \bD \to \bD$ with data as in~\cref{sec:background}. We assume that 
the 2-monad $\ms$ restricts along~$J$. Explicitly, this means that we have a dotted functor
\[
\xymatrix@C+3em{
\bC \ar[r]^J \ar@{.>}[d]_{S} & \bD \ar[d]^{S} \\
\bC \ar[r]_J & \bD \, ,}
\]
and that, for $X \in \bC$, the components of the multiplication and the unit, written $\mus_X \co \mss X \to \ms X$ and $\es_X \co X \to \ms X$, respectively, 
are in $\bC$. This implies that the pseudofunctor $J \co \bC \to \bD$ can be lifted to pseudofunctors $J \co \psSalg_\bC \to \psSalg_\bD$ and
$J \co \Salg_\bC \to \Salg_\bD$ (the definition of these 2-categories is recalled in~\cref{sec:background}), making the 
following diagram commute:
\begin{equation}
\label{equ:diagram-lift-inclusion}
\begin{gathered} 
\xymatrix@C+3em{
\Salg_\bC \ar[r]^{J} \ar[d]  & \Salg_\bD \ar[d]\\
\psSalg_\bC  \ar[r]^J  \ar[d] & \psSalg_\bD \ar[d]  \\
\bC \ar[r]_{J} & \bD \, ,}
\end{gathered}
\end{equation} 
where the vertical arrows in the top square are inclusions and those in the bottom square
are forgetful 2-functors.
We shall deal with two types of liftings, one involving only strict
algebras (\cref{def:lif}) and another  one involving both strict algebras and pseudoalgebras (\cref{def:pseudo-lift}). We begin by defining the simpler
type of lifting, involving only  strict algebras.

\begin{definition}  \label{def:lif} A \emph{lifting of $T$ to strict algebras for $S$}, denoted
\[
\xymatrix{
\Salg_\bC \ar[rr]^{\liftT} \ar[d]_U & & \Salg_\bD \ar[d]^U  \\
\bC     \ar[rr]_{\mt }                    & & \bD    \, ,                 }
\]
consists of
\begin{itemize} 
\item a strict algebra structure on $\mt  A$, for every $A \in \Salg_\bC$;
\item a pseudomorphism structure on  $f^\da \co \mt  A \to \mt  B$, for every pseudomorphism $f \co JA \to \mt  B$;
\item a pseudomorphism structure on $\et_A \co J A \to \mt  A$, for every $A \in \Salg_\bC$; 
\end{itemize} 
such that 
\begin{itemize}
\item $\mmu_{f, g} : (g^\da \comp f)^\da \rightarrow g^\da \comp f^\da$ is an algebra 2-cell for every pair
of pseudomorphisms $f \co JA \to \mt  B$ and $g \co J B \to \mt C$; 
\item $\meta_f \co f \rightarrow f^\da \comp \et_A$ is an algebra 2-cell for every pseudomorphism $f \co JA \to \mt  B$;
\item $\mbeta_A \co {\et_A}^\da \rightarrow 1_{\mt  A}$ is an algebra 2-cell for $A \in \Salg_\bC$.
\end{itemize}
\end{definition}

Note that a lifting of $T$ to strict algebras gives immediately a relative pseudomonad $\bar{\mt}$ over the pseudofunctor~$J \co
\Salg_\bC \to \Salg_\bD$ such that applying the forgetful 2-functors to the data of $\bar{\mt}$ returns the corresponding
data of $\mt$. We shall give several examples of liftings of the relative monad of presheaves to strict 
algebras for some 2-monads in~\cref{sec:applications}. However,  it is not useful to work with liftings to categories of strict algebras for other 2-monads, since typically for a strict algebra $A$ 
there is no evident structure of strict algebra structure on $\mt A$. In order to address this situation, we introduce the following definition.

\begin{definition}  \label{def:pseudo-lift}
 A \emph{lifting of $\mt$ to pseudoalgebras for $S$}, denoted
\[
\xymatrix{
\Salg_{\bC} \ar[rr]^{\liftT} \ar[d] & & \psSalg_{\bD} \ar[d]  \\
\bC     \ar[rr]_{\mt }                    & & \bD                     }
\]
consists of the following data:
\begin{itemize} 
\item a pseudoalgebra structure on $\mt  A$, for every $A \in \Salg_{\bC}$;
\item a pseudomorphism structure on  $f^\da \co \mt  A \to \mt  B$, for every pseudomorphism $f \co JA \to \mt B$;
\item a pseudomorphism structure on $\et_A \co JA \to \mt  A$, for every  $A \in \Salg_{\bC}$;
\end{itemize} 
such that 
\begin{itemize} 
\item $\mmu_{f, g} \co (g^\da \comp f)^\da \rightarrow  g^\da \comp f^\da$ is an algebra 2-cell for every pair
of pseudomorphisms $f \co JA \to \mt  B$ and $g \co J B \to \mt C$; 
\item $\meta_f \co f \rightarrow f^\da \comp \et_A$ is an algebra 2-cell  for every pseudomorphism $f \co JA \to \mt  B$;
\item $\mbeta_A \co {\et_A}^\da \rightarrow 1_{\mt  A}$ is an algebra 2-cell for every $A \in \Salg_\bC$.
\end{itemize}
\end{definition}

Similarly to what happened for liftings to strict algebras, a lifting of $T$ to pseudoalgebras gives a relative pseudomonad $\bar{\mt}$, but now over the pseudofunctor~$J \co
\Salg_\bC \to \psSalg_\bD$, again suitably related to $\bar{\mt}$ via the appropriate forgetful 2-functors. Note
here that for the inclusion $J \co \Cat \to \CAT$, the corresponding inclusion $J \co \Salg_\Cat \to \psSalg_\CAT$
is not merely about size distinction, but involves both strict algebras and pseudoalgebras. Indeed, the notion of a relative 
pseudomonad was designed to encompass these situations as well. 

Our next goal is to show how a lifting of a relative pseudomonad $T$  gives rise to
a pseudomonad on the Kleisli bicategory of $T$. In the one-dimensional situation, such a step commonly involves
passing via a distributive law~\cite{BeckJ:disl}. In our setting, where we are dealing with both coherence and size issues, 
such an approach would be rather complicated, as one would have to adapt the theory of pseudo-distributive 
laws~\cite{KellyG:cohtla,MarmolejoF:dislp,MarmolejoF:cohplr} to relative pseudomonads. However, it is possible to take a more direct approach.

\begin{theorem}  \label{thm:main}   \label{thm:kleadj} 
Assume that $\mt$ has a lifting to either strict algebras or pseudoalgebras for $S$. Then $\ms$ has an
extension to a pseudomonad $\extS \co \kl(\mt) \to \kl(\mt)$ on the Kleisli bicategory of $T$.
\end{theorem}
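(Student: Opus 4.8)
The plan is to realise $\extS$ as the pseudomonad induced by a pseudoadjunction relating $\kl(\mt)$ to the Kleisli bicategory of the relative pseudomonad coming from the lifting. Write $\bDD$ for $\Salg_\bD$ in the case of a lifting to strict algebras and for $\psSalg_\bD$ in the case of a lifting to pseudoalgebras. As noted after~\cref{def:lif} and~\cref{def:pseudo-lift}, the lifting determines a relative pseudomonad $\liftT$ over the lifted pseudofunctor $J \co \Salg_\bC \to \bDD$, whose data become, after composition with the forgetful $2$-functors in~\eqref{equ:diagram-lift-inclusion}, precisely the data of $\mt$ over $J \co \bC \to \bD$. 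By~\cref{thm:kbicat}, $\liftT$ has a Kleisli bicategory $\kl(\liftT)$, with the strict $\ms$-algebras in $\bC$ as objects and $\kl(\liftT)[A,B] = \bDD[JA, \liftT B]$ as hom-categories.

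First I would construct a pseudofunctor $\extU \co \kl(\liftT) \to \kl(\mt)$. On objects it sends a strict algebra to its underlying object of $\bC$; on a $1$-cell $A \to B$ of $\kl(\liftT)$, that is, a pseudomorphism $JA \to \liftT B$ of $\ms$-algebras, it returns the underlying $1$-cell $JA \to \mt B$ in $\bD$, which is a $1$-cell $A \to B$ of $\kl(\mt)$ once we use the commutativity of~\eqref{equ:diagram-lift-inclusion} to match domains and codomains; similarly on $2$-cells. Since composition, identities and the coherence isomorphisms of $\kl(\liftT)$ are assembled from $(-)^\da$, $\et$, and the $2$-cells $\mmu$, $\meta$, $\mbeta$ of $\liftT$, all of which lie over the corresponding data of $\mt$ by the lifting axioms, $\extU$ preserves this structure on the nose, so it is a pseudofunctor (in fact a strict homomorphism of bicategories).

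Next I would produce a left pseudoadjoint $\extF \co \kl(\mt) \to \kl(\liftT)$, sending $X \in \bC$ to the free $\ms$-algebra $FX$. The key observation is that, because $\ms$ restricts along $J$, the lifted pseudofunctor $J \co \Salg_\bC \to \bDD$ commutes with the free-algebra functors, so that $J(FX)$ is the free $\bDD$-algebra on $JX$. Hence, using the free--forgetful pseudoadjunction $F \dashv U$ between $\bD$ and $\bDD$ recalled in~\cref{sec:background}, for every $X \in \bC$ and $B \in \kl(\liftT)$ there is an adjoint equivalence of categories
\[
\kl(\liftT)[\extF X, B] = \bDD[F(JX), \liftT B] \equi \bD[JX, U(\liftT B)] = \kl(\mt)[X, \extU B] \, ,
\]
pseudonatural in $X$ and $B$. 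By~\cref{adjfunct}, taking there $G = \extU$, these equivalences promote $\extF$ to a pseudofunctor and exhibit it as a relative left pseudoadjoint to $\extU$ over $1_{\kl(\mt)}$, that is, as an ordinary left pseudoadjoint of $\extU$. Since a pseudoadjunction of bicategories induces a pseudomonad on its codomain~\cite{KellyG:eleoot,StreetR:fibb} (alternatively, combine~\cref{thm:adjkls} with~\cref{thm:rel-to-pseudo}), we set $\extS \defeq \extU \extF \co \kl(\mt) \to \kl(\mt)$.

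It remains to check that $\extS$ is an extension of $\ms$, in the sense that $\extS \kcomp F^T \iso F^T \kcomp \ms$ and that the unit and multiplication of $\extS$ restrict along the canonical pseudofunctor $F^T \co \bC \to \kl(\mt)$ of~\cref{thm:psekle} to those of $\ms$. On objects, $\extS X = U(FX) = \ms X$. Tracing through the adjoint transposes along $F \dashv U$, the required isomorphisms follow from the pseudonaturality of $\et$ (\cref{prop:unit-nat}) together with the monad laws for $\ms$, such as $\mus_X \kcomp \ms(\es_X) = 1$. I expect the main obstacle to be not any isolated hard step, but rather the bookkeeping needed to verify that the hom-equivalences displayed above are coherently pseudonatural and compatible with the relative-pseudoadjunction data of~\cref{thm:psekle}; all of this, however, reduces to standard $2$-monad theory for $\psSalg$ and to the coherence already established for $\liftT$ and $\kl(\liftT)$.
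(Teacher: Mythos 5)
Your proposal is correct and takes essentially the same route as the paper: form the Kleisli bicategory of the lifted relative pseudomonad $\liftT$, take the forgetful pseudofunctor to $\kl(\mt)$, exhibit a left pseudoadjoint sending $X$ to the free algebra $\ms X$ by reducing the required hom-equivalences to the free--forgetful pseudoadjunction for $\ms$, and define $\extS$ as the induced pseudomonad. The only step you leave implicit, which the paper spells out in the diagram \eqref{equ:usenow}, is the choice of unit $1$-cells $\exte_X \defeq \et_{\ms X} \comp e_X$ and the natural isomorphism $f^\da \comp \exte_X \iso f \comp e_X$ identifying Kleisli composition with $\exte_X$ with the free--forgetful hom-equivalence, which is what makes your displayed equivalence literally of the form required by \cref{def:frec}.
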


\begin{proof} We only deal with the case of a lifting to pseudoalgebras, since the case of lifting to strict algebras
is completely analogous.  First, we consider the relative pseudomonad $\bar{\mt}$ over $J \co
\Salg_\bC \to \psSalg_\bD$ and its Kleisli bicategory $\kl(\bar{\mt})$. The objects of~$\kl(\bar{\mt})$ are strict algebras with underlying object in $\bC$, and its hom-categories
are given by 
\[
\kl(\bar{\mt})[A, B] = \psSalg_\bD[JA, \mt B] \, .
\]
Secondly, we observe that there is a forgetful pseudofunctor $U \co \kl(\bar{\mt}) \to \kl(\mt)$, defined on objects by sending a strict algebra to its underlying object. 
To define the action on hom-categories, let  $A, B \in \Salg_\bC$. Then, the required functor is determined by the diagram
\begin{equation*}
\begin{gathered}
\xymatrix@C+3em{
\kl(\liftT)[A, B] \ar[r]^{U_{A,B}}  \ar@{=}[d] &  \kl(\mt )[A, B] \ar@{=}[d] \\ 
\psSalg[ JA, \mt  B] \ar[r]_{U_{A,B}} & \bD[ JA, \mt  B ] \, .}
\end{gathered}
 \end{equation*} 
We claim that $U$ has a left pseudoadjoint.  
The action of the left pseudoadjoint on objects is defined by sending $X$ to $\ms X$, the free pseudoalgebra on $X$ (which is in fact a strict algebra since $\ms$ is
a 2-monad). 
Next, for~$X \in \bC$, we define morphisms $\exte_X \co X \to \ms X$ in $\kl(\mt )$ as the composite
\begin{equation*}
\xymatrix{
JX \ar[r]^-{e_X} &
\ms  J X = J \ms X \ar[r]^-{\et_{\ms X}} &
\mt  \ms X }
\end{equation*}
 in $\bD$. 
We wish to show that these are suitably universal. For this, let us observe that the diagram 
\begin{equation}
\label{equ:usenow}
\begin{gathered}
\xymatrix@C+2cm{
\kl(\mt )[X, A]  \ar@{=}[dd] & \kl(\liftT)[\ms X, A]  \ar[l]_{U(-) \circ \exte_X} \ar@{=}[d] \\
  & \psSalg_\bD[JSX, \mt A] \ar@{=}[d] \\ 
\bD[JX, \mt A]  & \psSalg_\bD[SJX, \mt A] \ar[l]^{U(-) \comp e_X}   }
\end{gathered}
\end{equation} 
commutes up to natural isomorphism, since if $f \co SJX \to \mt A$ is a pseudomorphism, then 
\[
f \kcomp \exte_X = f^\da \comp \exte_X = f^\da \comp \et_{\ms X} \comp e_X \iso f \comp e_X \, .
\]
Since the horizontal arrow at the bottom of~\eqref{equ:usenow} is an equivalence, we have the desired 
universality of the morphism $\exte_X$. Now that we have a pseudoadjunction 
\[
\xymatrix{
  \kl(\mt )
 \ar@<1.2ex>[rr]^-{F}
 \ar@{}[rr]|{\bot}    
 & & \ar@<1.2ex>[ll]^-{U} 
  \kl(\liftT) \, ,}
\]
we obtain the desired extension $\tilde{S}$ as the pseudomonad associated to this pseudoadjunction.
\end{proof}

We conclude this section by showing how to compose  a relative pseudomonad and a 2-monad.

\begin{theorem} \label{thm:lnl}
Assume that $\mt$ admits a lifting to
pseudoalgebras of $S$. Then the function sending $X \in \bC$ to $\mts(X) \in \bD$ 
admits the structure of a relative pseudomonad over $J \co \bC \to \bD$.
\end{theorem}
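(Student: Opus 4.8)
The plan is to build $\mts$ from the ingredients we have already assembled, rather than verifying the coherence axioms of \cref{thm:relative-pseudomonad} directly. By \cref{thm:main}, the lifting of $\mt$ to pseudoalgebras for $S$ produces an extension $\extS \co \kl(\mt) \to \kl(\mt)$ of $S$ to a pseudomonad on the Kleisli bicategory of $\mt$; moreover the proof of \cref{thm:main} exhibits $\extS$ as the pseudomonad arising from a pseudoadjunction
\[
\xymatrix{
  \kl(\mt )
 \ar@<1.2ex>[rr]^-{F}
 \ar@{}[rr]|{\bot}
 & & \ar@<1.2ex>[ll]^-{U}
  \kl(\liftT) \, .}
\]
On the other hand, \cref{thm:psekle} gives the Kleisli relative pseudoadjunction of $\mt$ itself,
\[
\xymatrix{
                       & \kl(\mt) \ar[d]^{G^T} \\
\bC \ar[r]_J \ar[ur]^(.45){F^T} & \bD \, .}
\]
The key observation is that these two fit together exactly in the shape required by \cref{thm:compose-adjunctions}: a relative pseudoadjunction over $J \co \bC \to \bD$ followed by an (ordinary) pseudoadjunction of bicategories. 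Composing them, \cref{thm:compose-adjunctions} yields a relative pseudoadjunction of the form
\[
\xymatrix{
                       & \kl(\liftT) \ar[d]^{G^T U} \\
\bC \ar[r]_J \ar[ur]^{F F^T} & \bD \, ,}
\]
and then \cref{thm:adjkls} turns this relative pseudoadjunction into a relative pseudomonad over $J$. It remains to identify the underlying object assignment of this relative pseudomonad with $X \mapsto \mts(X)$: by construction the right-hand pseudofunctor sends $X \in \bC$ to $G^T U F F^T X$, and unwinding the definitions ($F^T X = X$ in $\kl(\mt)$, $F X = SX$ in $\kl(\liftT)$ via the free-algebra functor, $U$ the forgetful pseudofunctor, and $G^T$ sending a strict algebra with underlying object $SX$ to $\mt(SX)$) this composite is precisely $\mt S(X) = \mts(X)$. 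Hence the structure transports along this identification to give the desired relative pseudomonad on $X \mapsto \mts(X)$.

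I would organize the proof in three short steps: first, recall the relative pseudoadjunction of \cref{thm:psekle} for $\mt$ and the pseudoadjunction $F \dashv U$ between $\kl(\mt)$ and $\kl(\liftT)$ extracted from the proof of \cref{thm:main}; second, apply \cref{thm:compose-adjunctions} to obtain a composite relative pseudoadjunction over $J$ with right leg $G^T U \co \kl(\liftT) \to \bD$; third, apply \cref{thm:adjkls} to obtain a relative pseudomonad over $J$, and check on objects that its underlying assignment is $X \mapsto \mts(X)$.

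The main obstacle is the object-level identification in the third step and, more delicately, making sure the composite really has the form to which \cref{thm:compose-adjunctions} applies — that is, that $U \co \kl(\liftT) \to \kl(\mt)$ together with its left pseudoadjoint $F$ is genuinely a pseudoadjunction of bicategories in the sense used there, and that the object $G^T U F F^T X$ computes to $\mt S X$ once one uses $G^T F^T = \mt$ on objects (from \cref{thm:psekle}) and the fact that $F$ sends $X$ to the free $S$-algebra $SX$. Everything else is bookkeeping: the 2-cell data $\mmu$, $\meta$, $\mbeta$ for $\mts$ are produced mechanically by \cref{thm:adjkls}, and their coherence is guaranteed by that theorem, so no direct diagram chase is needed.
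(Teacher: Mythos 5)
Your argument is correct, and its overall architecture is the same as the paper's: take the Kleisli relative pseudoadjunction of $\mt$ from \cref{thm:psekle}, compose it with a pseudoadjunction using \cref{thm:compose-adjunctions}, and then apply \cref{thm:adjkls}, finally checking that the underlying object assignment is $X \mapsto \mt\ms X$. The one place where you diverge is the choice of the pseudoadjunction being composed: you reach back into the \emph{proof} of \cref{thm:main} and use the pseudoadjunction $F \dashv U$ between $\kl(\mt)$ and $\kl(\liftT)$ directly, so that the composite relative pseudoadjunction has right leg $G^T U \co \kl(\liftT) \to \bD$; the paper instead uses only the \emph{statement} of \cref{thm:main} (the existence of the pseudomonad $\extS$ on $\kl(\mt)$) and composes with the Kleisli pseudoadjunction $F^{\extS} \dashv G^{\extS}$ of $\extS$, obtained by applying \cref{thm:psekle} again, so its right leg is $G^T G^{\extS} \co \kl(\extS) \to \bD$. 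Both composites compute to $\mt\ms X$ on objects (your computation $G^T U F F^T X = G^T(\ms X) = \mt \ms X$ is right, since $F$ sends $X$ to the free strict algebra $\ms X$ and $U$ forgets), and both induce the same pseudomonad $\extS$, so either yields a valid structure of relative pseudomonad on $X \mapsto \mt\ms X$. What the paper's route buys is modularity: it treats \cref{thm:main} as a black box and stays within Kleisli constructions, at the cost of forming the extra bicategory $\kl(\extS)$. What your route buys is economy: no second Kleisli construction is needed, but you must cite the pseudoadjunction $F \dashv U$ established inside the proof of \cref{thm:main} (in particular that the universality of the units $\exte_X$ there really does assemble into a pseudoadjunction of bicategories), exactly the point you flag as the delicate step.
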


\begin{proof} First, recall that, by~\cref{thm:psekle}, we have a relative pseudoadjunction 
\begin{equation}
\label{equ:first-rel-adj}
\begin{gathered}
\xymatrix{
                                                  & \kl(\mt )      \ar[d]^{G^T} \\
\bC \ar[ur]^{F^T}  \ar[r]_J   & \bD    \, .                          }
\end{gathered}
\end{equation} 
Secondly, let us consider the pseudomonad $\extS \co \kl(\mt ) \to \kl(\mt )$ constructed in the proof 
of~\cref{thm:main} and its 
associated Kleisli bicategory $\kl(\extS)$. Applying again~\cref{thm:psekle}, this time in the
case of an ordinary pseudomonad, we have a pseudoadjunction
\begin{equation}
\label{equ:klexts}
\xymatrix{
    \kl(\mt ) 
 \ar@<1.2ex>[rr]^{F^{\extS}} 
 \ar@{}[rr]|{\bot}    
 & & \ar@<1.2ex>[ll]^{G^{\extS}} \kl(\extS) \, . }
\end{equation}
 By~\cref{thm:compose-adjunctions}, we can then compose the pseudoadjunctions in~\eqref{equ:first-rel-adj} and the relative pseudoadjunction in~\eqref{equ:klexts} so as to obtain a new relative pseudoadjunction 
\[
\xymatrix{
                                    & \kl(\extS)     \ar[d]^{G^T G^{\extS}} \\
\bC \ar[ur]^{F^{\extS} F^T}  \ar[r]_J  & \bD    \, .                        }
\]
By~\cref{thm:adjkls} we then obtain a relative pseudomonad over $J \co \bC \to \bD$. 
Unfolding the definitions, one readily checks that
the underlying function of this relative pseudomonad sends $X \in \bC$ to $\mts (X) \in \bD$,
as required.
\end{proof}

\section{Substitution monoidal structures}
\label{sec:applications}

We  apply our results to obtain a homogeneous method for extending several 2-monads from the 2-category $\Cat$ of
small categories and functors to the bicategory $\Prof$ of small categories and profunctors, encompassing all the examples
considered in the theory of variable binding~\cite{FioreM:abssvb,PowerJ:binsgc,TanakaM:abssvllb},  concurrency~\cite{CattaniG:proomb}, species of structures~\cite{FioreM:carcbg}, models of the differential $\lambda$-calculus~\cite{FioreM:matmcc}, and operads~\cite{GambinoN:opebaf}.

The simplest examples of liftings for the relative pseudomonad for presheaves are with respect to 2-monads on $\CAT$
whose strict algebras are locally small categories 
equipped with suitable classes of limits.
These 2-monads are co-lax, as discussed in~\cite{KellyG:prolt}. The specific
examples of 2-monads that we consider here are those for categories with terminal object,
categories with chosen finite products (by which we mean categories with chosen terminal object and binary products) and categories with chosen finite limits (by which we mean categories with chosen terminal object and pullbacks). 
Each of these 2-monads is flexible in the sense of~\cite{BirdG:fleltc,BlackwellR:twodmt}
and restricts along the inclusion $J \co \Cat \to \CAT$
to a 2-monad on the 2-category $\Cat$ of small categories, so as to
determine a situation as in~\eqref{equ:diagram-lift-inclusion}.
We speak of small (or locally small) strict algebras to indicate
small (or locally small) categories equipped with a strict algebra structure. 

We make some preliminary observations about the pseudomorphisms (\cf~\eqref{equ:marcelo})
in the cases under consideration. Since limits are determined 
up to a unique isomorphism, the pseudomorphisms are exactly the functors that preserve
the specified limits in the usual up to isomorphism sense:
the coherence conditions for a pseudomorphism are automatic~\cite{KellyG:prolt}. Similarly,
one sees directly that any 2-cell between functors preserving 
the relevant limits is an algebra 2-cell. In the terminology of~\cite{KellyG:prolt}, these 2-monads $S$
are fully property-like. It follows in particular that~$\Salg_\CAT[\catA, \catB]$ can be regarded as a full subcategory of~$\CAT[\catA, \catB]$. All 
this is in fact an abstract consequence of the fact
that the 2-monads in question are all co-lax. That fact
is evident and the general theory appears in~\cite{KellyG:prolt}.

\begin{theorem} \label{thm:first-lift} 
Let $\ms \co \CAT \to \CAT$ be the $2$-monad for  
categories with terminal object, or categories with finite products, or categories with finite limits. 
Then 
the  relative pseudomonad of presheaves 
$\pshh \co \Cat \to \CAT$ has a lifting 
to strict $S$-algebras, 
\[
\bar{\pshh} \co \Salg(\Cat) \rightarrow \Salg(\CAT) \, .
\]
\end{theorem}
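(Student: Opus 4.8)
The plan is to verify, for each of the three $2$-monads $S$ in question, that the presheaf construction $\pshh$ together with the left Kan extension operation $(-)^\da$ carries the structure required by \cref{def:lif}. The key point is that each such $S$ is \emph{fully property-like} in the sense recalled just above the statement: a strict $S$-algebra is a locally small category equipped with chosen limits of the relevant shapes, a pseudomorphism is exactly a functor preserving those limits up to isomorphism, and every natural transformation between such functors is automatically an algebra $2$-cell. So the task reduces to three things: (a) put a canonical strict $S$-algebra structure on $\pshX$ whenever $\catX$ carries one; (b) show that if $f \co J\catX \to \pshY$ preserves the chosen limits then so does $f^\da \co \pshX \to \pshY$, and likewise that $\yo_\catX \co J\catX \to \pshX$ preserves them; (c) observe that the structural $2$-cells $\mmu$, $\meta$, $\mbeta$ are then automatically algebra $2$-cells, by full property-likeness. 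Step (c) is free once (a) and (b) are in place, so the content is entirely in (a) and (b).

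For (a): the category $\pshX = [\catX^\op, \Set]$ always has all small limits, computed pointwise. When $\catX$ is itself a strict $S$-algebra, the point is not that $\pshX$ acquires new limits — it has them all — but that we must choose a specific algebra structure $a \co S(\pshX) \to \pshX$ compatible with the chosen structure on $\catX$ via the Yoneda embedding. The natural choice is to take the limits in $\pshX$ computed pointwise in $\Set$ using the chosen terminal object / products / pullbacks of $\Set$ (these choices being part of the data of $S$ acting on $\CAT$); one then checks this is genuinely a strict algebra for $S$, which is routine since $S$ is a limit-theory $2$-monad and limits in a presheaf category are pointwise. Strictness (identity associativity and unit $2$-cells) holds because the pointwise construction is literally functorial in the shape diagram.

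For (b): this is the substantive step. Given $f \co J\catX \to \pshY$ preserving the chosen limits, recall $f^\da$ is the left Kan extension of $f$ along $\yo_\catX$, given by the coend formula $f^\da(p)(y) = \int^{x} f(x)(y) \times p(x)$. The claim is that $f^\da$ preserves the chosen terminal object, products, or pullbacks. The cleanest route is to use the equivalence $\COC[\pshX, \pshY] \simeq \CAT[\catX, \pshY]$ from \cref{exa:psh}: $f^\da$ is the cocontinuous extension of $f$, so one must show that the cocontinuous extension of a finite-limit-preserving functor is again finite-limit-preserving. For the terminal object this is immediate since $f^\da$ is a left adjoint composed suitably — actually one argues directly: $f^\da(1_{\pshX})$ is the colimit of $f$ over $\catX$, and since $\catX$ has a terminal object $t$ and $\yo$ preserves it, $1_{\pshX} \iso \yo(t)$... more carefully, one uses that $1_{\pshX}$ is the left Kan extension of the constant functor and exploits the explicit formula, reducing to the fact that $f^\da \yo_\catX \iso f$ preserves the chosen limits by hypothesis, together with the fact that filtered-or-sifted colimits in $\Set$ commute with finite limits. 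For products and pullbacks the decisive ingredient is that in $\Set$ finite limits commute with sifted colimits, and the coend defining $f^\da$ is a sifted colimit; combined with pointwise computation of limits in $\pshY$ this yields preservation. That $\yo_\catX$ itself preserves the chosen limits is the classical fact that the Yoneda embedding preserves all limits, together with the compatibility built into the choice in (a).

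I expect step (b) — specifically, showing that $f^\da$ preserves finite limits, in the pullbacks case — to be the main obstacle, since it is the only place where something beyond bookkeeping is needed: one must correctly identify the coend as a sifted (or filtered) colimit and invoke commutation of finite limits with such colimits in $\Set$, while being careful that the chosen limits in $\pshX$ and $\pshY$ are the pointwise ones so that everything can be checked objectwise. Once this is done, the rest of the verification of \cref{def:lif} is automatic from full property-likeness of $S$, as explained in (c).
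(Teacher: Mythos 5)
Your overall architecture matches the paper's: put the pointwise strict algebra structure on $\pshX$, check that $\yo_\catX$ and the extensions $F^\da$ are pseudomorphisms, and let full property-likeness (plus, in the paper, lax idempotence of $\pshh$ via \cref{prop:laxid}) dispose of the $2$-cell and coherence conditions. However, there is a genuine gap exactly where you predicted the difficulty would lie: your proposed mechanism for step (b) is incorrect. The coend $F^\da(p)(y)=\int^{x}F(x)(y)\times p(x)$ is a colimit indexed by the category of elements of $p$, and this category is in general neither filtered nor sifted (take $p$ a coproduct of two representables: its category of elements is disconnected). Moreover, even granting siftedness, sifted colimits in $\Set$ commute only with finite \emph{products}, not with pullbacks or equalizers, so the commutation fact you invoke cannot handle the finite-limits case at all. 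In the same vein, the unqualified claim that ``the cocontinuous extension of a finite-limit-preserving functor is again finite-limit-preserving'' is false without the hypothesis that the domain has (and the functor preserves) finite limits; it is precisely this hypothesis, available here because $\catA$ is a strict $S$-algebra, that must be exploited, and your argument never does so beyond asserting the conclusion.

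The correct case-by-case arguments, which is how the paper proceeds, are: for the terminal object, $1_{\pshA}\iso\yo_\catA(1_\catA)$ and $F^\da\yo_\catA\iso F$, so no commutation fact is needed; for finite products, use that $F^\da$ is cocontinuous and that $-\times q$ is cocontinuous in the cartesian closed category $\pshB$, so preservation reduces to products of representables, where it follows from the hypotheses on $F$ and $\yo$; for finite limits, use flatness: since $\catA$ has finite limits and $F$ preserves them, $F$ is flat (equivalently, each $\mathrm{ev}_b\circ F\co\catA\to\Set$ is flat, limits in presheaf categories being pointwise), and the left Kan extension of a flat functor is left exact \cite[\S VII.10]{MacLaneS:shegl}. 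Here the relevant commutation is of \emph{filtered} colimits with finite limits, packaged through flatness, not siftedness of the coend. With step (b) repaired in this way, your steps (a) and (c) are fine and the proof goes through essentially as in the paper.
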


\begin{proof} Let us begin by observing that we have a choice of limits in $\Set$, 
so for any $\catX \in \Cat$, $\pshX$ has
chosen limits defined pointwise. Thus, there is a
strict $S$-algebra structure on $\pshX$. Furthermore, 
the Yoneda embedding $\yo_\catX \co
\catX \rightarrow \pshX$ preserves those limits. Hence, if~$\catA \in \Salg_\Cat$ is a small strict algebra,
then $\yo_\catA \co \catA \rightarrow \pshA$ is a pseudomorphism of
$\ms $-algebras in an evident fashion. Composition with
$\yo_\catA$ thus gives us a functor
\[
\xymatrix{
\Salg_\CAT[\catA, \pshB] & & \Salg_\CAT[\pshA, \pshB] \, . \ar[ll]^{(-)\comp \yo_\catA} }
\]
Now, suppose that $F \co \catA \rightarrow \pshB$ is a pseudomorphism, that is to say,
$F$ preserves the relevant limits.
Then the left Kan extension $F^\da \co \pshA
\rightarrow \pshB$ also preserves these limits. This is 
critical, and for
the separate classes of limits needs to be proved on a case
by case basis. The case when $\ms $ is the 2-monad for a terminal object
is simple. If $F$ preserves the terminal, then so does 
$F^\da \comp \yo_\catA$ (being naturally isomorphic to $F$). But the Yoneda
$\yo_\catA \co \catA \to \pshA$ preserves the terminal object, and hence
so does $F^\da$. 
 The case when $\ms $ is the 2-monad for finite products can be seen as a corollary of the results in~\cite{ImG:unipcm} (see also~\cref{thm:lift}), but we 
 provide a direct argument. Suppose that $F$ and hence
$F^\da \comp \yo_\catA$ preserves finite products. As the Yoneda
$\yo_\catA \co \catA \to \pshA$ preserves finite products, $F^\da$
preserves finite products of representables. But the objects 
of $\pshA$ are colimits of representables. Since 
$F^\da$ and products with objects (are left adjoints
and so) preserve colimits, it follows that $F^\da$
preserves finite products. 
Finally, the case when $S$ is the monad for finite limits
is similar, though in this case the result is standard.
If $\catA$ has finite limits and $F \co \catA \to \pshB$ preserves finite limits,
then $F$ is flat~\cite[\S VII.10, Corollary 3]{MacLaneS:shegl} and hence $F^\da$ preserves finite limits. 

Thus, in each case, $F^\da$ is a pseudomorphism of  strict $S$-algebras;
and, as we observed above, any
2-cell between pseudomorphisms will be an algebra 2-cell.
Hence, the left Kan extension gives us a functor
\begin{equation*}
\begin{gathered}
\xymatrix{
\Salg_\CAT[\catA, \pshB] \ar[rr]^(.45){(-)^\da} 
& & \Salg_\CAT[\pshA, \pshB] \, . }
\end{gathered}
\end{equation*}

Now we exploit the fact that the relative pseudomonad for
presheaves is lax idempotent (see~\cref{thm:exa-lax-idemp-psh}). So we have an adjunction
\begin{equation}
\xymatrix@C+3ex{
    \CAT[\catA, \pshB]
 \ar@<1.2ex>[r]^(.48){(-)^\da}
 \ar@{}[r]|(.48){\bot}    
 & \ar@<1.2ex>[l]^(.52){(-)  \comp \yo_\catA} 
\CAT[\pshA, \pshB ] \, .}
\end{equation}
We observed that $\Salg[\catA, \pshB]$ and $\Salg[\pshA, \pshB]$
are full subcategories of $\CAT[\catA, \pshB]$ and 
$\CAT[\pshA, \pshB]$, and it is clear from the above
discussion that this 
adjunction  restricts to an adjunction
\[
\xymatrix@C+3ex{
    \Salg_\CAT[\catA, \pshB]
 \ar@<1.2ex>[r]^(.48){(-)^\da}
 \ar@{}[r]|(.47){\bot}    
 &  \ar@<1.2ex>[l]^(.52){(-)  \, \yo_\catA} \Salg_\CAT[\pshA, \pshB]  \, .}
\]
In view of~\cref{prop:laxid},  the claim is proved.
\end{proof}

\begin{remark}
With the experience of these examples of liftings, it is easy
to give examples of 2-monads which do not lift as above.
\begin{itemize}
\item Consider the 2-monad for a category with zero object
(\ie~an object which is both terminal and initial). No category of presheaves of sets
over a non-empty category has a zero object. So the 2-monad cannot lift.
The same applies to the monad for direct sums or  biproducts
(in the terminology of~\cite{MacLaneS:catwm}).

\item Consider the 2-monad for a category with initial object.
Given a category $\catA$ with initial object, while the presheaf
category $\pshA$ does indeed have an initial object, the
Yoneda embedding does not preserve it. Hence the 2-monad cannot lift.

\item Consider the 2-monad for a category with equalisers.
Given a category $\catA$ with equalisers, the presheaf
category $\pshA$ also has equalisers, and the
Yoneda embedding $\yo_\catA \co \catA \to \pshA$ preserves them. 
But now suppose that $\catA$ has equalisers and that $F \co \catA \to \Set$
preserves them. It does not follow that 
$F^{\da} \co \pshA \to \Set$ preserves equalisers.
For a counterexample one can obviously just take
$\catA$ to be the fork (\ie~the generic equaliser). Then for
example take $F \co \catA \to \Set$
mapping the parallel pair to the identity and twist on $2$
with equaliser $0$. Because of this failure it
follows that the 2-monad cannot lift.
\end{itemize}
\end{remark}

Next, we consider 2-monads associated with
various notions of monoidal category.  To start with, we consider 2-monads which are flexible in the sense of~\cite{BirdG:fleltc,BlackwellR:twodmt}
and we have again a situation  as in~\eqref{equ:diagram-lift-inclusion}.

\begin{theorem}  \label{thm:lift}
 Let $S \co \CAT \to \CAT$ be the 2-monad for  monoidal categories, or symmetric monoidal categories, or monoidal categories in which the unit is a terminal object, 
or symmetric monoidal categories in which the unit is a terminal object.
The relative pseudomonad of presheaves $\pshh \co \Cat \to \CAT$ has a lifting to strict $S$-algebras, 
\[
\bar{\pshh} \co \Salg_\Cat \rightarrow \Salg_\CAT \, .
\]
\end{theorem}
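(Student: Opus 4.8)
The plan is to follow the proof of~\cref{thm:first-lift}, with Day's convolution monoidal structure~\cite{DayB:clocf,ImG:unipcm} replacing the chosen pointwise limits and the universal property of the presheaf construction as a free (symmetric) monoidal cocompletion replacing the case-by-case analysis of preservation of limits. First I would record, for each small (symmetric) monoidal category $\catX$ and a fixed choice of colimits in $\Set$, the Day convolution structure on $\pshX$, with tensor product
\[
(p \otimes q)(x) = \int^{x_1, x_2 \in \catX} \catX[x, x_1 \otimes x_2] \times p(x_1) \times q(x_2)
\]
and unit $\catX[-, I]$; this is a (symmetric) monoidal structure, and when the monoidal unit of $\catX$ is terminal the Day unit $\catX[-, I]$ is terminal in $\pshX$. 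Thus $\pshX$ carries a strict $S$-algebra structure. Moreover the Yoneda embedding $\yo_\catX \co \catX \to \pshX$ is strong (symmetric) monoidal, the comparison 2-cells $\yo_\catX(x) \otimes \yo_\catX(y) \iso \yo_\catX(x \otimes y)$ and $\catX[-, I] \iso \yo_\catX(I)$ being instances of the co-Yoneda isomorphism, and in the terminal-unit cases it preserves the terminal object; hence $\yo_\catX$ is a pseudomorphism of $S$-algebras.

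The crux is to show that, for a pseudomorphism $F \co \catA \to \pshB$ of $S$-algebras — equivalently, a strong (symmetric) monoidal functor, additionally preserving the terminal object in the terminal-unit variants — the left Kan extension $F^\da \co \pshA \to \pshB$ along $\yo_\catA$, as computed by the coend formula~\eqref{equ:lan}, is again a pseudomorphism. This is precisely the content of the characterization of $\pshA$ as the free (symmetric) monoidal cocompletion of $\catA$~\cite{ImG:unipcm} (see also~\cite{DayB:clocf}): since $\pshB$ is cocomplete and its Day convolution is cocontinuous in each variable, $F$ extends, uniquely up to isomorphism, to a cocontinuous strong (symmetric) monoidal functor $\pshA \to \pshB$, and this extension is $F^\da$. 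Concretely, the comparison cells $F^\da(p) \otimes F^\da(q) \to F^\da(p \otimes q)$ and $I \to F^\da(I)$ witnessing the putative strong monoidal structure on $F^\da$ are assembled from those of $F$ using the cocontinuity of $F^\da$ and of $\otimes$, are invertible on representables by the strong monoidality of $F$ and $\yo_\catA$, and hence are invertible everywhere since every presheaf is a colimit of representables; the associativity, unit, and (in the symmetric case) symmetry axioms follow by the same density argument. In the terminal-unit variants one further uses that the terminal object of $\pshA$ is representable, so that $F^\da$ carries it to the terminal object of $\pshB$.

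With these facts in hand, I would assemble the lifting as follows. Since $\pshh \co \Cat \to \CAT$ is lax idempotent (\cref{thm:exa-lax-idemp-psh}), it is in particular a relative left Kan pseudomonad (\cref{prop:laxid}); one then checks that the assignments $\catA \mapsto \pshA$, $\yo_\catA$, $F \mapsto F^\da$, together with the invertible 2-cells $\eta_F \co F \to F^\da \comp \yo_\catA$, form a relative left Kan pseudomonad over $J \co \Salg_\Cat \to \Salg_\CAT$ in the sense of~\cref{thm:lax-local-adjunction}. For this one verifies that $\eta_F$ exhibits $F^\da$ as a left extension of $F$ along $\yo_\catA$ \emph{in $\Salg_\CAT$}, and likewise for the two remaining left-extension axioms; in each case the corresponding statement in $\CAT$ holds by~\cref{thm:exa-lax-idemp-psh}, and it upgrades to $\Salg_\CAT$ because every functor of the form $(-)^\da$, and every composite of such, is cocontinuous while Day convolution is cocontinuous in each variable, so that the universal property need only be tested on representables, where it follows from the strong monoidality established above. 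An application of~\cref{prop:laxid} then yields a lax idempotent relative pseudomonad $\bar{\pshh}$ over $J \co \Salg_\Cat \to \Salg_\CAT$, and applying the forgetful 2-functors to its data returns the data of $\pshh$; so $\bar{\pshh}$ is the desired lifting. The main obstacle is the crux step: extracting from~\cite{ImG:unipcm} that $F^\da$ is strong (symmetric) monoidal and that the relevant left extensions survive in $\Salg_\CAT$; the remaining verifications are routine.
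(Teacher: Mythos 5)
Your proposal is essentially correct, but it is organized differently from the paper's proof. The paper does not redo the analysis of when $F^\da$ is strong monoidal, nor does it verify the left-Kan axioms in the 2-category of algebras: it simply quotes the Im--Kelly universal property of Day convolution in the form of the adjoint equivalences $\MON[\catA,\catB]\simeq\MONCOC[\pshA,\catB]$ for $\catB$ monoidally cocomplete, observes that these assemble into a relative pseudoadjunction over $\Mon\to\MON$ through $\MONCOC$, and reads off the lifting from that (via \cref{thm:adjkls}), handling the symmetric and terminal-unit variants by the same citation plus the remark that the Day unit is the Yoneda image of the unit. You instead transplant the template of \cref{thm:first-lift}: equip $\pshA$ with the convolution structure, show $\yo_\catA$ and $F^\da$ are pseudomorphisms, and then verify the relative left Kan pseudomonad axioms over $J\co\Salg_\Cat\to\Salg_\CAT$ and invoke \cref{prop:laxid}. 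That route works, but be aware that the analogy with \cref{thm:first-lift} is weaker than it may appear: there the 2-monads are fully property-like, so $\Salg_\CAT[\catA,\pshB]$ is a \emph{full} subcategory of $\CAT[\catA,\pshB]$ and the adjunction restricts for free, whereas for monoidal structure the algebra 2-cells are the monoidal transformations, which form a non-full subcategory. Consequently your ``upgrade to $\Salg_\CAT$'' step carries genuine content: you must check that $\eta_F$ is itself a monoidal transformation and that the bijection of 2-cells supplied by the Kan property in $\CAT$ restricts to monoidal 2-cells against \emph{arbitrary} strong monoidal $G\co\pshA\to\pshB$ (not just cocontinuous ones). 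Your density argument does handle this --- the two composites to be compared are maps out of $F^\da(p)\otimes F^\da(q)$, which is a colimit of values at representables since $F^\da$ is cocontinuous and convolution is separately cocontinuous, and $G$ enters only through the codomain via naturality --- but as written (``the universal property need only be tested on representables'') it is compressed, and it amounts to re-proving the 2-dimensional part of the Im--Kelly equivalence that the paper simply cites. What your approach buys is a self-contained argument parallel to \cref{thm:first-lift} that avoids introducing $\MONCOC$ and relative pseudoadjunctions; what the paper's buys is brevity and a clean separation of the Day--Im--Kelly input from the formal theory.
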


\begin{proof} 
The base case is that of a monoidal category. We discuss that case
and derive the others. We use the analysis in \cite{ImG:unipcm}
of the univeral property of Day's convolution tensor product~\cite{DayB:clocf}.
We write $\Mon$ (respectively, $\MON$) for the 2-category of 
small (respectively, locally small) monoidal categories,
strong monoidal functors and monoidal natural transformations.
 For cocomplete 
categories $\catA$, $\catB$, a functor 
$F \co \catA \times \catB \rightarrow \catC$ is \myemph{separately cocontinuous} if for every 
$a \in \catA$, $b \in \catB$  both $F(a, -) \co \catB \rightarrow \catC$ and
$F(-, b) \co \catA \rightarrow \catC$
are cocontinous. 
We write $\COC[\catA ,\catB; \catC]$ for the
category of such functors and natural transformations between them.
A cocomplete 
category $\catA$ equipped with a monoidal structure
is \myemph{monoidally cocomplete}
if the tensor product is separately cocontinuous. 
We then have a straightforward 2-category $\MonCoc$ of 
monoidally cocomplete locally small categories,
strong monoidal cocontinuous functors, and monoidal
transformations.

In~\cite{DayB:clocf} Day showed how for any small monoidal 
category~$\catA$, 
the category $\pshA$ of presheaves on $\catA$ can be equipped with 
a  monoidal structure, called
the convolution tensor product, which makes $\pshA$ into a biclosed monoidally
cocomplete category, defined by letting
\[
(F_1 \hat{\otimes} F_2)(a) \defeq
\int^{a_1, a_2 \in \catA} 
F_1(a_1) \times F_2(a_2) \times \catA[a, a_1 \otimes a_2]
\]
for $F_1, F_2 \in \pshA$ and $a \in \catA$.  Furthermore, 
the Yoneda embedding $\yo_\catA \co \catA \to \pshA$ has then
the structure of a strong monoidal functor. For $\catA \in \Mon$ and $\catB \in \MonCoc$,
we have the adjoint equivalence obtained in~\cite{ImG:unipcm}
\begin{equation}
\label{equ:dayuniv}
\xymatrix{
    \MON[\catA,  \catB]
 \ar@<1.2ex>[rr]^(.47){(-)^\da}
 \ar@{}[rr]|{\bot \quad}    
 & & \ar@<1.2ex>[ll]^(.54){(-)  \, \yo_\catA} 
\MONCOC [\shyA, \catB] \, ,}
\end{equation}
as required. In particular, for any $\catA \, , \catB \in \Mon$, 
we have an adjoint equivalence
\[
\xymatrix{
    \tcat{MON}[ \catA, \shyB]
 \ar@<1.2ex>[rr]^(.47){(-)^\da}
 \ar@{}[rr]|{\bot \quad}    
 & & \ar@<1.2ex>[ll]^(.54){(-)  \, \yo_\catA} 
\tcat{MONCOC} [\shyA, \shyB] \, .}
\]
In our
terminology, the adjoint equivalences in~\eqref{equ:dayuniv} 
amount to saying that  we have a relative pseudoadjunction 
\[
\xymatrix{
                       & \MONCOC \ar[d] \\
\tcat{Mon} \ar[r] 
\ar[ur]^{\pshh} & \tcat{MON} \, .}
\]
This  provides exactly a lifting of the relative pseudomonad $\pshh \co \Cat \to \CAT$ to a 
relative pseudomonad $\bar{\pshh} \co \Salg_\Cat \rightarrow \Salg_\CAT$.  All these
considerations extend to symmetric monoidal categories, again by the results in~\cite{DayB:clocf,ImG:unipcm}.
For the 2-monads for monoidal categories with the condition that the unit is terminal, the lift
follows from the above, observing that the unit of the convolution monoidal structure is the
Yoneda embedding of the unit on the base category and so it remains a terminal object.
\end{proof} 

Our final group of examples of a lifting  involve  2-monads on $\CAT$ which are not flexible. In this case, we have 
a lifting to pseudoalgebras in the sense of~\cref{def:pseudo-lift}.

\begin{theorem}  \label{thm:extpse}  Let $\ms \co \Cat \to \Cat$ be the 2-monad for either strict monoidal categories, or symmetric strict monoidal categories,  
or strict monoidal category in which the unit is terminal, or symmetric strict monoidal categories in which the unit
is terminal. Then 
the  relative pseudomonad $\pshh \co \Cat \to \CAT$ has a lifting to pseudo-$S$-algebras,
\[
\bar{\pshh} \co \Salg_{\Cat} \rightarrow \psSalg_{\CAT} \, .
\]
\end{theorem}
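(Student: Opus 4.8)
The plan is to mirror the argument for \cref{thm:lift}, the one genuinely new feature being that Day's convolution tensor product on a presheaf category is defined by a coend and hence is associative and unital only up to coherent isomorphism; consequently $\pshA$ equipped with it is only a \emph{pseudoalgebra} for $\ms$, not a strict algebra, even when $\catA$ is a strict monoidal category. This is exactly why the lifting produced here must land in $\psSalg_\CAT$ in the sense of \cref{def:pseudo-lift} rather than in $\Salg_\CAT$, and it is the reason a separate argument is required instead of an appeal to \cref{thm:lift}. I would treat the base case, with $\ms$ the 2-monad for strict monoidal categories, and derive the remaining three. Here $\Salg_\Cat$ is the 2-category of small strict monoidal categories, strict monoidal functors and monoidal transformations, whereas $\psSalg_\CAT$ may be identified with the 2-category $\MON$ of locally small monoidal categories, strong monoidal functors and monoidal transformations (the distinction between biased and unbiased monoidal structures being harmless here). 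For $\catA \in \Salg_\Cat$ one equips $\pshA$ with the Day convolution monoidal structure of \cite{DayB:clocf}, which makes it a biclosed monoidally cocomplete category — so an object of $\MONCOC$, and in particular a pseudoalgebra for $\ms$ — and the Yoneda embedding $\yo_\catA \co \catA \to \pshA$ then carries its canonical strong monoidal structure, supplying the pseudomorphism structure demanded of the unit $\et_\catA$.

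The key step is to invoke the universal property of Day convolution from \cite{ImG:unipcm} in the form \eqref{equ:dayuniv}: for a small monoidal category $\catA$ and a monoidally cocomplete category $\catB$, precomposition with $\yo_\catA$ is an adjoint equivalence $\MON[\catA, \catB] \simeq \MONCOC[\pshA, \catB]$. This holds in particular when $\catA$ is strict monoidal, and assembling it uniformly in $\catA$ yields a relative pseudoadjunction
\[
\xymatrix{
 & \MONCOC \ar[d] \\
\Salg_\Cat \ar[r] \ar[ur]^{\pshh} & \psSalg_\CAT \, ,}
\]
in which the vertical arrow is the evident forgetful 2-functor and $\pshh$ sends $\catA$ to $\pshA$ with Day convolution. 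By \cref{thm:adjkls} this produces a relative pseudomonad over $J \co \Salg_\Cat \to \psSalg_\CAT$: its underlying object assignment is $\catA \mapsto \pshA$; the functors $(-)^\da$ send a strong monoidal $f \co \catA \to \pshB$ to the left Kan extension $f^\da$ with its induced strong monoidal (indeed cocontinuous) structure; and the structure 2-cells $\mmu_{f, g}$, $\meta_f$ and $\mbeta_\catA$ are monoidal transformations by construction, since they are obtained from the adjoint equivalences above, which live in $\MONCOC$ and $\MON$. As forgetting the monoidal data returns the relative pseudomonad $\pshh$ of \cref{exa:psh}, this is precisely a lifting $\bar{\pshh} \co \Salg_\Cat \to \psSalg_\CAT$ in the sense of \cref{def:pseudo-lift}, exactly as in the corresponding step of the proof of \cref{thm:lift}.

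Finally I would deduce the variants. For symmetric strict monoidal categories one uses that Day convolution is symmetric when $\catA$ is symmetric and that \eqref{equ:dayuniv} has a symmetric-monoidal counterpart, again by \cite{DayB:clocf,ImG:unipcm}, leaving the argument otherwise unchanged. For strict monoidal categories in which the unit is terminal one notes that the convolution unit on $\pshA$ is $\yo_\catA(I)$, where $I$ is the unit of $\catA$, and since $\yo_\catA$ preserves limits this is terminal in $\pshA$ whenever $I$ is terminal in $\catA$; the pseudomorphisms and algebra 2-cells produced above automatically respect this further property, it being property-like, and the symmetric case with terminal unit combines the two. The only real obstacle here is conceptual rather than computational: recognising that $\pshA$ equipped with Day convolution is merely a pseudoalgebra forces the target of the lifting to be $\psSalg_\CAT$; once that is granted, the universal property of \cite{ImG:unipcm} does all the work, and no coherence calculation beyond what is already packaged in \cref{thm:adjkls} is needed.
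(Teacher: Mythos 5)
Your proof is correct, but it is not the argument the paper actually writes out: it is the ``direct'' route that the paper mentions in one sentence (``one follows through the arguments of the previous section making the necessary adjustments'') and then declines to carry out. You re-run the proof of \cref{thm:lift} over the inclusion $J \co \Salg_\Cat \to \psSalg_\CAT$: identify pseudo-$\ms$-algebras with (unbiased) monoidal categories, equip $\pshA$ with Day convolution, and use the Im--Kelly universal property~\eqref{equ:dayuniv} to assemble a relative pseudoadjunction through $\MONCOC$, whence \cref{thm:adjkls} and \cref{def:pseudo-lift} give the lifting; the variants are handled exactly as in the paper's proof of \cref{thm:lift}. The paper instead argues indirectly and much more briefly: for each of the four 2-monads $\ms$, the associated flexible 2-monad $\ms'$ is the 2-monad for the corresponding unbiased (non-strict) structure, so that $\psSalg \iso \Spalg$, and since $\Salg$ is a full sub-2-category of $\Spalg$, the lifting $\bar{\pshh} \co \Spalg(\Cat) \to \Spalg(\CAT)$ already produced by \cref{thm:lift} simply restricts along $\Salg(\Cat) \hookrightarrow \Spalg(\Cat)$. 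The trade-off is the expected one: your argument is self-contained modulo \cite{ImG:unipcm} and makes explicit why the target must be pseudoalgebras (Day convolution is only coherently associative), whereas the paper's restriction argument avoids repeating any convolution-specific work at the price of invoking the identification of pseudoalgebras with strict algebras for the flexible replacement. Two small points to keep honest in your write-up, at the same level of care as the paper: the identification $\psSalg_\CAT \simeq \MON$ is a biequivalence mediated by unbiased monoidal structures rather than a literal equality, and one should note (as the paper implicitly does in \cref{thm:lift}) that forgetting the monoidal data on the 2-cells $\mmu$, $\meta$, $\mbeta$ produced by the monoidal relative pseudoadjunction really returns the corresponding 2-cells of the presheaf relative pseudomonad, which holds because both are determined by the same universal property of left Kan extension along the Yoneda embedding.
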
 

\begin{proof} 
There is a direct and an indirect approach to this.
Directly, one  follows through the arguments of the previous
section making the necessary adjustments. 
Indirectly, observe that in each case $S'$, the flexible 2-monad  associated to $S$, is the 2-monad whose 
strict algebras are categories with unbiased structure (in the sense of~\cite{LeinsterT:higohc})
as in the list in~\cref{thm:lift}.
Now $\Salg$ is a full sub-2-category of $\Spalg \iso \psSalg$.
So the lifting of the relative pseudomonad of presheaves $\pshh \co \Cat \to \CAT$
to $\bar{\pshh} \co \Spalg(\Cat) \rightarrow \Spalg(\CAT)$
restricts to~$\Salg(\Cat) \rightarrow \Spalg(\Cat)$.
\end{proof}

\begin{corollary} \label{thm:cat-to-prof}
All the 2-monads on $\Cat$ listed in Theorems~\ref{thm:first-lift},\ref{thm:lift}, and \ref{thm:extpse}
admit an extension to pseudomonads on $\Prof$.
\end{corollary}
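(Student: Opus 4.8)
The plan is to assemble the machinery of~\cref{sec:liftings-extensions-compositions} with the liftings just established. Write $\pshh$ for the relative pseudomonad of presheaves over $J \co \Cat \to \CAT$ of~\cref{exa:psh}, and recall from the example after~\cref{thm:kbicat} the canonical isomorphism of bicategories $\kl(\pshh) \iso \Prof$ identifying the Kleisli bicategory of $\pshh$ with the bicategory of profunctors. Each $2$-monad $\ms$ listed in Theorems~\ref{thm:first-lift}, \ref{thm:lift}, and~\ref{thm:extpse} is the restriction along $J$ of a $2$-monad on $\CAT$ which restricts along $J$ in the sense required in~\cref{sec:liftings-extensions-compositions}, so that the hypotheses of~\cref{thm:main} are available with $\mt \defeq \pshh$.

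First I would record the relevant lifting in each case. For the $2$-monads of~\cref{thm:first-lift} (categories with terminal object, with finite products, with finite limits) and of~\cref{thm:lift} (the four monoidal variants), those theorems provide a lifting $\bar{\pshh} \co \Salg_\Cat \to \Salg_\CAT$ of $\pshh$ to strict algebras; for the $2$-monads of~\cref{thm:extpse} (the strict monoidal variants) one instead has, by that theorem, a lifting $\bar{\pshh} \co \Salg_\Cat \to \psSalg_\CAT$ of $\pshh$ to pseudoalgebras. In either situation~\cref{thm:main} applies, yielding an extension of $\ms$ to a pseudomonad $\extS \co \kl(\pshh) \to \kl(\pshh)$ on the Kleisli bicategory of $\pshh$.

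Finally I would transport $\extS$ along the isomorphism $\kl(\pshh) \iso \Prof$ to obtain a pseudomonad on $\Prof$. That this pseudomonad genuinely \emph{extends} $\ms$ is seen by unwinding the construction in the proof of~\cref{thm:main}: its underlying pseudofunctor sends a small category $X$ to $\ms X$ and a profunctor $F \co X \to Y$ to an appropriate left Kan extension, it agrees with $\ms$ on the image of the inclusion pseudofunctor $\Cat \to \Prof$ (which sends a functor to the corresponding representable profunctor), and its unit is induced by $\es$; one checks the coherence data restrict accordingly, which is precisely the sense in which $\ms$ on $\Cat$ and the new pseudomonad on $\Prof$ are compatible.

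There is no genuine obstacle here: the corollary is a formal consequence of~\cref{thm:main}, the liftings supplied by Theorems~\ref{thm:first-lift}, \ref{thm:lift}, and~\ref{thm:extpse}, and the identification $\kl(\pshh) \iso \Prof$. The only point demanding a little care is bookkeeping --- verifying that each $\ms$ in the three theorems does satisfy the hypotheses of~\cref{thm:main}, and that the isomorphism $\kl(\pshh) \iso \Prof$ is compatible with the inclusions of $\Cat$ --- but this is routine.
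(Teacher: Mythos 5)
Your proposal is correct and follows exactly the route the paper takes: the paper's proof is simply that the corollary is an immediate consequence of \cref{thm:main} combined with the liftings supplied by Theorems~\ref{thm:first-lift}, \ref{thm:lift}, and~\ref{thm:extpse}, together with the identification of $\kl(\pshh)$ with $\Prof$ established earlier. Your additional bookkeeping (distinguishing the strict-algebra liftings from the pseudoalgebra lifting, and transporting along $\kl(\pshh)\iso\Prof$) just makes explicit what the paper leaves implicit.
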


\begin{proof} Immediate consequence of Theorem~\ref{thm:main} and Theorems~\ref{thm:first-lift},~\ref{thm:lift}, and \ref{thm:extpse}.
\end{proof} 

For each of the monads $S \co \Cat \to \Cat$ above, one can consider the Kleisli bicategory associated to the pseudomonad
$\tilde{S} \co \Prof \to \Prof$ determined by~\cref{thm:cat-to-prof}. The composition functors of these Kleisli bicategories can be understood as  generalizations of various kinds of 
substitution monoidal structures~\cite{FioreM:secodsa,FioreM:carcbg,FioreM:abssvb,GambinoN:opebaf,GurskiN:opetpc,KellyG:onojpm,SmirnovV:onccts}, among those giving rise to the notions of a many-sorted Lawvere theory and of a coloured
operad. We conclude the paper by illustrating this idea in the case of coloured operads.

\begin{example}  \label{sec:substitution-monoidal-structures}
As an illustration of the theory developed here, we revisit the construction of the bicategory of generalized species of structures of~\cite{FioreM:carcbg} and relate
more precisely its composition with the substitution monoidal structure for coloured operads~\cite{BaezJ:higda} (see also~\cite{DayB:abssec}). For this, let us begin by recalling the definition of the 2-monad
$\ms \co \Cat \to \Cat$ for symmetric strict monoidal categories. Let $\catX \in \Cat$. For $n \in \mathbb{N}$, define $\ms_n(\catX)$ to be the category having as objects $n$-tuples
$\bar{x} = (x_1, \ldots, x_n)$ of objects $x_i \in \catX$ and as morphisms $(\sigma, \bar{f}) \co \bar{x} \to \bar{x}'$ given by pairs consisting of a permutation $\sigma \in \Sigma_n$ and 
an $n$-tuple of morphisms $f_i \co x_i \to x'_{\sigma(i)}$. We then let
\[
\ms(\catX) \defeq \bigsqcup_{n \in \mathbb{N}} S_n(\catX) \, .
\]
The category $\ms(\catX)$ is equipped with a strict symmetric monoidal structure: the tensor product, written $\bar{x} \oplus \bar{x}'$, is given by concatenation of sequences, and the unit is given by the empty sequence;  the symmetry is given by a permutation of identity maps. This definition can be  extended easily to obtain a 2-functor $\ms \co \Cat \to \Cat$. The multiplication of the monad is given by taking a sequence of sequences and forgetting the bracketing, while the unit has components $\es_\catX \co \catX \to \ms(\catX)$ mapping $x \in \catX$ to the singleton sequence $(x) \in \ms(\catX)$. 
By the theory developed above, and \cref{thm:cat-to-prof} in particular, we obtain a pseudomonad 
\begin{equation}
\label{equ:exts}
\extS \co \Prof \to \Prof \, .
\end{equation} 
For our purposes, it is convenient to describe explicitly the relative pseudomonad  associated to~$\extS$.
Its action on objects is the function mapping $\catX \in \Cat$ to $S(\catX) \in \Cat$.  The component of the unit for $\catX \in \Cat$ is the profunctor $\exte_\catX \co \catX \to \ms(\catX)$ 
corresponding to the functor
\[
\xymatrix@C+2ex{
\catX \ar[r]^{\es_\catX} & \ms(\catX) \ar[r]^{\yon_{\ms(\catX)}} & \mps (\catX) \, .}
\]
The extension functors of the relative pseudomonad have the form 
\[
(-)^\sh \co  \Prof[\catX, \ms(\catY)]  \rightarrow  \Prof[\ms(\catX), \ms(\catY)] \, ,
\] 
where $\catX, \catY$ are small categories. For a functor $F \co \catX \to \mps (\catY)$, we can define the functor $F^\sh \co \ms(\catX) \rightarrow   \mps(\catY)$ recalling that, since $\ms(\catY)$ has a symmetric strict monoidal structure, $\mps(\catY)$ has an unbiased  (in the sense of~\cite{LeinsterT:higohc}) symmetric  monoidal structure. Hence, by the universal property of~$\ms(\catX)$, we have an essentially unique $F^\sh$ fitting into a diagram of the following form:
\[
\xymatrix@C+3ex{
\catX \ar[r]^{\es_{\catX}}  \ar@/_1pc/[dr]_-{F}  \ar@{}[dr]|{\quad \cong}  & \ms(\catX) \ar[d]^-{F^\sh} \\
 & \mps(\catY) \, .} 
 \]
For brevity, we omit the description of the invertible natural transformations
\[
\meta_F \co F \Rightarrow F^\sh \circ \tilde{\es}_\catX \, , \qquad
\mmu_{F,G} \co (G^\sh \circ F)^\sh \Rightarrow G^\sh \circ F^\sh \, , \qquad
\kappa_{\catX} \co (\tilde{\es}_\catX)^\sh \Rightarrow \id_{\catX} \, .
\]
The Kleisli bicategory of $\extS$ is the bicategory $\SProf$ of $S$-profunctors defined in~\cite{GambinoN:opebaf}, which has 
small categories as objects and hom-categories defined by 
\[
\SProf[\catX, \catY] = \Prof[\catX, \ms(\catY)] = \CAT[ \ms(\catY)^\op \times \catX, \Set] \, .
\]
Indeed, one can readily check that composition and identity morphisms of $\SProf$, as defined in~\cite{GambinoN:opebaf}, coincide with those
given by instantiating the general definition of a Kleisli bicategory. Following~\cite{GambinoN:opebaf}, we write $\CatSym$ for the bicategory of 
categorical symmetric sequences, which is defined as the opposite of $\SProf$. Thus, the objects of $\CatSym$ are small categories and its
hom-categories are given by
\[
\CatSym[\catX, \catY] \defeq \SProf[\catY, \catX] = \CAT[\ms(\catX)^\op \times \catY, \Set] \, .
\]
We write $F[\bar{x}; y]$ for the result of applying  $F \co \ms(\catX)^\op \times \catY \to \Set$ to $(\bar{x}, y) \in \ms(\catX)^\op \times \catY$.
Given categorical symmetric sequences $F \co \catX \to \catY$ and $G \co \catY \to \catZ$, \ie~functors $F \co \ms(\catX)^\op \times \catY \to \Set$ and $G \co \ms(\catY)^\op \times \catZ 
\to \Set$, their composite $G \circ F \co
\catX \to \catZ$ in $\CatSym$ is given by considering $F$ and $G$ as $S$-profunctors in the opposite direction, taking their
composition in $\SProf$ using the definition of composition in a Kleisli bicategory, and then regarding the result as a categorical symmetric sequence from $\catX$ to
$\catZ$. Explicitly, one obtains that
\begin{multline}
\label{equ:subst-mon-struct}
(G \circ F)(\bar{x}; z) \defeq \bigsqcup_{m \in \mathbb{N}} \int^{(y_1, \ldots, y_m) \in \ms_m(\catY)} G[y_1, \ldots, y_m; z] \times \\
 \int^{\bar{x}_1 \in \ms(\catX)} \cdots \int^{\bar{x}_m \in \ms(\catX)} F[\bar{x}_1; y_1] \times \ldots \times F[\bar{x}_m; y_m] \times \ms(\catX)[\bar{x}, \bar{x}_1 \oplus \ldots \oplus \bar{x}_m] \, .
 \end{multline} 
Happily, this formula yields the definition of the substitution monoidal structure for coloured operads given in~\cite{BaezJ:higdat} by considering the special case where $\catX$ and $\catY$ are discrete and coincide with a fixed set of colours of the coloured operads under consideration. 

The bicategory $\CatSym$ can be seen to be equivalent to the bicategory of generalized species of structures $\Esp$ previously introduced in~\cite{FioreM:carcbg}. To see this, let us recall the
definition of $\Esp$. For this, observe that the duality pseudofunctor $(-)^\bot \co \Prof \to \Prof$ defined by $\catX^\bot \defeq \catX^\op$ allows us to turn this pseudomonad in~\eqref{equ:exts}
into a pseudocomonad. The bicategory $\Esp$ is then defined as the co-Kleisli bicategory of this pseudocomonad. More explicitly, its objects are small categories and its hom-categories are
given by 
\[
\Esp[\catX, \catY] = \Prof[\ms(\catX), \catY] = \CAT[\catY^\op \times \ms(\catX), \Set] \, .
\]
The bicategory $\Esp$ is then equivalent to $\CatSym$, via the pseudofunctor that sends $\catX$ to $\catX^\op$. Indeed,
\[
\CatSym[\catX, \catY] = \CAT[\ms(\catX^\op) \times \catY, \Set] \iso [\catY \times \ms(\catX)^\op, \Set] \iso \Esp[\catX^\op, \catY^\op] \, .
\]
Furthermore, the composition operation of categorical symmetric sequences defined in~\eqref{equ:subst-mon-struct} corresponds exactly to  
composition of generalized species of structures defined via co-Kleisli composition given in~\cite{FioreM:carcbg}. 
\end{example}


\begin{thebibliography}{10}

\bibitem{AltenkirchT:monnnb}
T.~Altenkirch, J.~Chapman, and T.~Uustalu.
\newblock Monads need not be endofunctors.
\newblock {\em Logical Methods in Computer Science}, 11(1:3):1--40, 2015.

\bibitem{ArtinM:SGA41}
M.~Artin, A.~Grothendieck, and J.-L. Verdier, editors.
\newblock {\em S\'eminaire de G\'eom\'etrie Alg\'ebrique du Bois Marie
  (1963-64) -- Th\'eorie des topos et cohomologie \'etale des schemas, Tome
  {1}}, volume 269 of {\em Lecture Notes in Mathematics}.
\newblock Springer, 1973.

\bibitem{BaezJ:higdat}
J.~Baez and J.~Dolan.
\newblock Higher-dimensional algebra and topological quantum field theory.
\newblock {\em Journal of Mathematical Physics}, 36:6073 -- 6105, 1995.

\bibitem{BaezJ:higda}
J.~Baez and J.~Dolan.
\newblock Higher-dimensional algebra {III}: $n$-categories and the algebra of
  opetopes.
\newblock {\em Advances in Mathematics}, 135(2):145 -- 206, 1998.

\bibitem{BarrM:toptt}
M.~Barr and C.~Wells.
\newblock {\em Toposes, triples, and theories}.
\newblock Springer, 1985.

\bibitem{BeckJ:disl}
J.~Beck.
\newblock Distributive laws.
\newblock In {\em Seminar on Triples and Categorical Homology Theory (ETH,
  Z\"urich, 1966/67)}, pages 119--140. Springer, 1969.

\bibitem{BenabouJ:intb}
J.~B{\'e}nabou.
\newblock Introduction to bicategories.
\newblock In {\em Reports of the Midwest Category Seminar}, volume~47 of {\em
  Lecture Notes in Mathematics}, pages 1--77. Springer, 1967.

\bibitem{BenabouJ:disw}
J.~B{\'e}nabou.
\newblock Distributors at work.
\newblock Notes by Thomas Streicher of a series of lectures given at {TU}
  Darmstadt, 2000.

\bibitem{BirdG:fleltc}
G.~J. Bird, G.~M. Kelly, A.~J. Power, and R.~H. Street.
\newblock Flexible limits for $2$-categories.
\newblock {\em Journal of Pure and Applied Algebra}, 61:1 -- 27, 1989.

\bibitem{BlackwellR:twodmt}
R.~Blackwell, G.~M. Kelly, and A.~J. Power.
\newblock Two-dimensional monad theory.
\newblock {\em Journal of Pure and Applied Algebra}, 59:1 -- 41, 1989.

\bibitem{BorceuxF:hancaI}
F.~Borceux.
\newblock {\em Handbook of categorical algebra}, volume~I.
\newblock Cambridge University Press, 1994.

\bibitem{BungeM:cohera}
M.~Bunge.
\newblock Coherent extensions and relational algebras.
\newblock {\em Transactions of the American Mathematical Society}, 197:355 --
  390, 1974.

\bibitem{CattaniG:proomb}
G.~L. Cattani and G.~Winskel.
\newblock Profunctors, open maps, and bisimulation.
\newblock {\em Mathematical Structures in Computer Science}, 2005.

\bibitem{ChengE:psdist}
E.~Cheng, M.~Hyland, and J.~Power.
\newblock Pseudo-distributive laws.
\newblock {\em Electronic Notes in Theoretical Computer Science}, 83, 2003.

\bibitem{ChikhladzeD:laxftm}
D.~Chikhladze.
\newblock Lax formal theory of monads, monoidal approach to bicategorical
  structures and generalized operads.
\newblock {\em Theory and Applications of Categories}, 30(10):332--386, 2015.

\bibitem{CrutwellG:unifgm}
G.~S.~H. Crutwell and M.~A. Shulman.
\newblock A unified framework for generalized multicategories.
\newblock {\em Theory and Applications of Categories}, 24(21):580--655, 2010.

\bibitem{CurienPL:opecdl}
P.-L. Curien.
\newblock Operads, clones, and distributive laws.
\newblock In {\em Proc. of the International Conference on Operads and
  Universal Algebra}, volume~9 of {\em Nankai Series in Pure, Applied
  Mathematics and Theoretical Physics}, pages 25--50. World Scientific, 2012.

\bibitem{DayB:clocf}
B.~J. Day.
\newblock On closed categories of functors.
\newblock In {\em Reports of the Midwest Category Seminar IV}, volume 137 of
  {\em Lecture Notes in Mathematics}, pages 1--38. Springer-Verlag, 1970.

\bibitem{DayB:limsf}
B.~J. Day and S.~Lack.
\newblock Limits of small functors.
\newblock {\em Journal of Pure and Applied Algebra}, 210(3):651--683, 2007.

\bibitem{DayB:abssec}
B.~J. Day and R.~H. Street.
\newblock Abstract substitution in enriched categories.
\newblock {\em Journal of Pure and Applied Algebra}, 179(1):49--63, 2003.

\bibitem{FioreM:matmcc}
M.~Fiore.
\newblock Mathematical models of computational and combinatorial structures.
\newblock In V.~Sassone, editor, {\em Foundations of Software Science and
  Computation Structures}, volume 3441 of {\em Lecture Notes in Computer
  Science}, pages 25 -- 46. Springer, 2005.

\bibitem{FioreM:secodsa}
M.~Fiore.
\newblock Second-order and dependently-sorted abstract syntax.
\newblock In {\em 23rd Symposium on Logic in Computer Science (Trento, 1999)},
  pages 57--68. IEEE Computer Society, 2008.

\bibitem{FioreM:carcbg}
M.~Fiore, N.~Gambino, M.~Hyland, and G.~Winskel.
\newblock The cartesian closed bicategory of generalized species of structures.
\newblock {\em Journal of the London Mathematical Society}, 77(2):203--220,
  1994.

\bibitem{FioreM:abssvb}
M.~Fiore, G.~Plotkin, and D.~Turi.
\newblock Abstract syntax and variable binding.
\newblock In {\em 14th Symposium on Logic in Computer Science (Trento, 1999)},
  pages 193 -- 202. IEEE Computer Society, 1999.

\bibitem{GambinoN:opebaf}
N.~Gambino and A.~Joyal.
\newblock On operads, bimodules, and analytic functors.
\newblock \url{arXiv:1405.7270}, 2015.
\newblock To appear in \emph{Memoirs of the American Mathematical Society}.

\bibitem{GordonR:cohb}
R.~Gordon, A.~J. Power, and R.~Street.
\newblock Coherence for tricategories.
\newblock {\em Memoirs of the American Mathematical Society}, 117(558), 1995.

\bibitem{GurskiN:opetpc}
N.~Gurski.
\newblock Operads, tensor products, and the categorical {B}orel construction.
\newblock \url{arXiv:1508.0405}, 2015.

\bibitem{HylandM:somrgd}
M.~Hyland.
\newblock Some reasons for generalizing domain theory.
\newblock {\em Mathematical Structures in Computer Science}, 20(2):239--265,
  2010.

\bibitem{HylandM:engeler}
M.~Hyland, M.~Nagayama, J.~Power, and G.~Rosolini.
\newblock A category-theoretic formulation of {E}ngeler-style models of the
  untyped lambda calculus.
\newblock {\em Electronic Notes in Theoretical Computer Science}, 161:43--57,
  2001.

\bibitem{HylandM:psempc}
M.~Hyland and J.~Power.
\newblock Pseudo-commutative monads and pseudo-closed 2-categories.
\newblock {\em Journal of Pure and Applied Algebra}, 175:141 -- 185, 2002.

\bibitem{ImG:unipcm}
G.~B. Im and G.M. Kelly.
\newblock A universal property of the convolution monoidal structure.
\newblock {\em Journal of Pure and Applied Algebra}, 43:75--88, 1986.

\bibitem{JoyalA:bratc}
A.~Joyal and R.~Street.
\newblock Braided tensor categories.
\newblock {\em Advances in Mathematics}, 102:20 -- 78, 1993.

\bibitem{KellyG:onmlcc}
G.~M. Kelly.
\newblock On {M}ac {L}ane's conditions for coherence of natural
  associativities, commutativities, etc.
\newblock {\em Journal of Algebra}, pages 397 -- 402, 1964.

\bibitem{KellyG:cohtla}
G.~M. Kelly.
\newblock Coherence theorems for lax algebras and for distributive laws.
\newblock In G.~M. Kelly and R.~H. Street, editors, {\em Category Seminar
  (Proc. Sem., Sydney, 1972/1973)}, volume 420 of {\em Lecture Notes in
  Mathematics}, pages 281 -- 375. Springer, 1974.

\bibitem{KellyG:eleoot}
G.~M. Kelly.
\newblock Elementary observations on $2$-categorical limits.
\newblock {\em Bulletin of the Australian Mathematical Society}, 39(2):301 --
  317, 1989.

\bibitem{KellyG:onojpm}
G.~M. Kelly.
\newblock On the operads of {J.} {P.} {M}ay.
\newblock {\em Reprints in Theory and Applications of Categories}, 13:1--13,
  2005.

\bibitem{KellyG:prolt}
G.~M. Kelly and S.~Lack.
\newblock On property-like structures.
\newblock {\em Theory and Applications of Categories}, 3(9):213 -- 250, 1997.

\bibitem{KockA:monwsa}
A.~Kock.
\newblock Monads whose structures are adjoint to units.
\newblock {\em Journal of Pure and Applied Algebra}, 104:41 -- 53, 1993.

\bibitem{LackS:cohapm}
S.~Lack.
\newblock A coherent approach to pseudo-monads.
\newblock {\em Advances in Mathematics}, 152:179 -- 202, 2000.

\bibitem{LackS:a2cc}
S.~Lack.
\newblock A $2$-categories companion.
\newblock In J.~Baez and J.~P. May, editors, {\em Towards higher categories},
  Lecture Notes in Mathematics, pages 105--191. Springer, 2009.

\bibitem{MacLaneS:catwm}
S.~Mac Lane.
\newblock {\em Categories for the working mathematician}.
\newblock Springer, 2nd edition, 1998.

\bibitem{MacLaneS:shegl}
S.~Mac Lane and I.~Moerdijk.
\newblock {\em Sheaves in geometry and logic}.
\newblock Springer, 1992.

\bibitem{MacLaneS:coh-bicategories}
S.~Mac Lane and R.~Par\'e.
\newblock Coherence for bicategories and indexed categories.
\newblock {\em Journal of Pure and Applied Algebra}, 37:59--80, 1985.

\bibitem{LawvereW:ordsed}
F.~W. Lawvere.
\newblock Ordinal sums and equational doctrines.
\newblock In {\em Seminar on Triples and Categorical Homology Theory (ETH,
  Z\"urich, 1966/67)}, volume~80 of {\em Lecture Notes in Mathematics}, pages
  141 -- 155. Springer, 1969.

\bibitem{LawvereF:metsgl}
F.~W. Lawvere.
\newblock Metric spaces, generalized logic, and closed categories.
\newblock {\em Rend. Sem. Mat. Fis. Milano}, 43:135--166, 1973.
\newblock Also in \emph{Reprints in Theory and Applications of Categories},
  1:1--37, 2002.

\bibitem{LeinsterT:higohc}
T.~Leinster.
\newblock {\em Higher operads, higher categories}, volume 298 of {\em London
  Mathematical Society Lecture Note Series}.
\newblock Cambridge University Press, 2004.

\bibitem{ManesE:algt}
E.~Manes.
\newblock {\em Algebraic theories}.
\newblock Springer, 1976.

\bibitem{MarmolejoF:docwsf}
F.~Marmolejo.
\newblock Doctrines whose structure forms a fully faithful adjoint string.
\newblock {\em Theory and Applications of Categories}, 3(2):23 -- 44, 1997.

\bibitem{MarmolejoF:dislp}
F.~Marmolejo.
\newblock Distributive laws for pseudo-monads.
\newblock {\em Theory and Applications of Categories}, 5(5):91 -- 147, 1999.

\bibitem{MarmolejoF:cohplr}
F.~Marmolejo and R.~J. Wood.
\newblock Coherence for pseudodistributive laws revisited.
\newblock {\em Theory and Applications of Categories}, 20(5):74--84, 2008.

\bibitem{MarmolejoF:monesn}
F.~Marmolejo and R.~J. Wood.
\newblock Monads as extension systems -- no iteration is necessary.
\newblock {\em Theory and Applications of Categories}, 24(4):84--113, 2010.

\bibitem{MarmolejoF:kaneli}
F.~Marmolejo and R.~J. Wood.
\newblock Kan extensions and lax idempotent pseudomonads.
\newblock {\em Theory and Applications of Categories}, 26(1):1--29, 2012.

\bibitem{MarmolejoF:noip}
F.~Marmolejo and R.~J. Wood.
\newblock No-iteration pseudo-monads.
\newblock {\em Theory and Applications of Categories}, 28(14):371--402, 2013.

\bibitem{PowerJ:agencr}
A.~J. Power.
\newblock A general coherence result.
\newblock {\em Journal of Pure and Applied Algebra}, 57(2):165 -- 173, 1989.

\bibitem{PowerJ:binsgc}
A.~J. Power and M.~Tanaka.
\newblock Binding signatures for generic contexts.
\newblock In P.~Urzyczyn, editor, {\em Typed Lambda Calculi and Applications:
  7th international conference}, volume 3461 of {\em Lecture Notes in Computer
  Science}, pages 308 -- 323. Springer, 2005.

\bibitem{ShulmanM:notepe}
M.~Shulman.
\newblock Not every pseudoalgebra is equivalent to a strict one.
\newblock {\em Advances in Mathematics}, 229(3):2024?--2041, 2012.

\bibitem{SmirnovV:onccts}
V.~A. Smirnov.
\newblock On the cochain complex of topological spaces.
\newblock {\em Mathematics of the {USSR}. {S}bornik},
  115~(157)(1~(5)):146--158, 1981.

\bibitem{StreetR:fortm}
R.~Street.
\newblock The formal theory of monads.
\newblock {\em Journal of Pure and Applied Algebra}, 2(2):149 -- 168, 1972.

\bibitem{StreetR:fibyl}
R.~Street.
\newblock Fibrations and {Y}oneda's lemma in a $2$-category.
\newblock In G.M. Kelly and R.H. Street, editors, {\em Category Seminar (Proc.
  Sem., Sydney, 1972/1973)}, volume 420 of {\em Lecture Notes in Mathematics},
  pages 104 -- 133. Springer, 1974.

\bibitem{StreetR:fibb}
R.~Street.
\newblock Fibrations in bicategories.
\newblock {\em Cahiers de Topologie et G\'eometrie Differentielle}, XXI(2):111
  -- 160, 1980.

\bibitem{TanakaM:abssvllb}
M.~Tanaka.
\newblock Abstract syntax and variable binding for linear binders.
\newblock In M.~Nielsen and B.~Rovan, editors, {\em Mathematical Foundations of
  Computer Science 2000: 25th International Symposium}, volume 1893 of {\em
  Lecture Notes in Computer Science}, pages 670 -- 679. Springer, 2000.

\bibitem{TanakaM:psedl}
M.~Tanaka.
\newblock {\em Pseudo-distributive laws and a unified framework for variable
  binding}.
\newblock PhD thesis, Laboratory for the Foundations of Computer Science,
  School of Informatics, University of Edinburgh, 2004.
\newblock Available as LFCS Technical Report ECS-LFCS-04-438.

\bibitem{UlmerF:prodraf}
F.~Ulmer.
\newblock Properties of dense and relative adjoint functors.
\newblock {\em Journal of Algebra}, 8:77--95, 1968.

\bibitem{WaltersR:catau}
R.~F.~C. Walters.
\newblock {\em A categorical approach to universal algebra}.
\newblock PhD thesis, Australian National University, 1970.

\bibitem{ZoberleinV:dokat}
V.~Z\"oberlein.
\newblock {\em Doktrinen auf $2$-Kategorien}.
\newblock PhD thesis, University of D\"usseldorf, 1974.

\end{thebibliography}
\end{document}